\newcommand{\ebf}{{\bf e}}
\newcommand{\ubf}{{\bf u}}
\newcommand{\vbf}{{\bf v}}
\newcommand{\xbf}{{\bf x}}
\newcommand{\ybf}{{\bf y}}
\newcommand{\zbf}{{\bf z}}
\def\Xbf{{\mathbf X}}
\def\wf{\widetilde{f}}
\def\wu{\widetilde{u}}
\def\wz{\widetilde{z}}
\def\wA{\widetilde{A}}
\def\wF{\widetilde{F}}
\def\wX{\widetilde{X}}
\def\wY{\widetilde{Y}}
\newcommand{\ybs}{{\boldsymbol y}}
\def\Cbb{{\mathbb C}}
\def\Ebb{{\mathbb E}}
\def\Nbb{{\mathbb N}}
\def\Pbb{{\mathbb P}}
\def\Rbb{{\mathbb R}}
\def\Bcal{{\mathcal B}}
\def\Fcal{{\mathcal F}}
\def\Ical{{\mathcal I}}
\def\Jcal{{\mathcal J}}
\def\Lcal{{\mathcal L}}
\def\Ucal{{\mathcal U}}
\def\Xcal{{\mathcal X}}
\def\Ycal{{\mathcal Y}}
\newcommand{\dif}[1]{\mathrm{d} #1}
\DeclareMathOperator*{\argmin}{argmin}
\DeclareMathOperator*{\supp}{supp}
\DeclareMathOperator*{\bsupp}{B-supp}
\newcommand{\vertiii}[1]{{\left\vert\kern-0.25ex\left\vert\kern-0.25ex\left\vert #1 
    \right\vert\kern-0.25ex\right\vert\kern-0.25ex\right\vert}} 
\newcommand{\Tscheb}{Chebyshev\xspace}
\newcommand{\jlb}[1]{{\color{black}{#1}}}
\providecommand\@dotsep{5}
\def\listtodoname{List of Todos}
\def\listoftodos{\@starttoc{tdo}\listtodoname}
\newtheorem{acorol}{Corollary}
\newtheorem{adef}{Definition}
\newtheorem{atheorem}{Theorem}
\newtheorem{aprop}{Proposition}
\newtheorem{alemma}{Lemma}
\theoremstyle{remark}
\newtheorem{assumption}{Assumption}
\newtheorem{rmk}{Remark}
\numberwithin{equation}{section}
\numberwithin{atheorem}{section}
\numberwithin{acorol}{section}
\numberwithin{rmk}{section}
\numberwithin{aprop}{section}
\title{Weighted block compressed sensing for parametrized function approximation}
\thanks{
This work started while the author was with the Chair C for Mathematics (Analysis), RWTH Aachen University, supported in part by the ERC grant StG 258926. 
The author would like to thank the Hausdorff Research Institute for Mathematics and the support of the Clay Mathematics Institute for his visit to the CRM to attend the IRP Constructive Approximation and Harmonic Analysis where part of this work has been done.
The author personally thanks Holger Rauhut for suggestions.
}
\author{Jean-Luc Bouchot}
\address{School of Mathematics and Statistics, Beijing Institute of Technology} 
\email{jlbouchot@bit.edu.cn} 
\date{\today}
\begin{document}

\begin{abstract}
In this paper we extend results taken from compressed sensing to recover Hilbert-space valued vectors. 
This is an important problem in parametric function approximation in particular when the number of parameters is high. 
By expanding our target functions in a polynomial chaos and assuming some compressibility of such an expansion, we can exploit structured sparsity (typically a group sparsity structure) to recover the sequence of coefficients with high accuracy. 

While traditional compressed sensing would typically expect a number of snapshots scaling exponentially with the number of parameters, we can beat this dependence by adding weights. 
This anisotropic handling of the parameter space permits to compute approximations with a number of samples scaling only linearly (up to log-factors) with the intrinsic complexity of the polynomial expansion. 

Our results are applied to problems in high-dimensional parametric elliptic PDEs. 
We show that under some weighted uniform ellipticity assumptions of a parametric operator, we are capable of numerically approximating the full solution (in contrast to the usual quantity of interest) to within a specified accuracy. 
\end{abstract}

\maketitle

\section{Introduction}
The problem of approximating parametric functions is analyzed through the lens of compressed sensing and model based sparse recovery~\cite{baraniuk2010model}. 
Parametric function approximation may be seen from \jlb{various perspectives such as vector valued-function approximation or parametrized function} .
The mathematical formulation reads as follows: 
given a (time-)space domain $\Omega \subset \Rbb^n$ and a parameter space $\Ucal$, find an approximation $\wu$ for $u : \Omega \times \Ucal \to \Rbb$ uniformly for all parameter $\ybs \in \Ucal$. 
A case that may be of interest is the case of time-dependent dynamical systems, where the time coordinate can be seen as a parameter. 
Even though what is presented here should have its own interest, this research is motivated by problems arising in parametric PDEs; an application we detail in great depth in Section~\ref{sec:PDEsExamples}. 
We recommend reading~\cite{Chkifa14Breakingcurse,Cohen15HighDPDEs} for some background on the challenges of high-dimensional parametric PDEs. 
The goal of this research is to recover the unknown mapping $u$ from the knowledge of its values $u(\ybs^{(i)})$ at certain (as few as possible) sampling points $\ybs^{(i)}$. 
By considering a polynomial expansion of the solution in the parameter space, we can model this problem as a linear inverse problem
\emph{Recover $u$ such that $Au = b$} where $A = (T_\nu(\ybs^{(i)}))_{\nu \in \Lambda; 1 \leq i \leq m}$ for a certain orthonormal system $T_\nu$ and where $b$ is a (vector valued-)\emph{vector} containing approximations of the solutions. 

Obviously without further information, the problem of recovering a vector-valued function in continuous space is not possible. 
To facilitate, we develop a framework based on compressed sensing and (structured) sparse approximation to solve it. 
We see that this approach allows, under some rather general assumptions, to recover parametric functions, even in high parameter spaces, thereby breaking the curse of dimensionality.

To be more precise, we combine ingredients from compressed sensing with the added flavor of structured sparsity. 
This structure will later take two precise forms: 1) a group sparsity model, in which one specifies globally that the spatial coordinates are activated \jlb{for the same pattern of multi-indices of parameters} and 2) a weighted sparse model, in which we add some anisotropy to the parameters. 
Basically, by favoring parameters which we know are of importance (via the weights), we then apply some structured recovery (in form of groups) such that we can recover the vector valued function. 

Ideas of weighted compressed sensing have already been developed for the recovery of scalar-valued functions, see~\cite{Rauhut13wCS}. 
In the case of interpolation problems, it allows for a mix between an efficient sparse representation and the recovery of a smooth function. 
Among other things, this shows a certain robustness to strong oscillations at the boundaries for interpolation. 

The block structure, on the other hand, has been encountered under the umbrella of model-based compressed sensing and has been known to be important for instance in the context of multi-band signal recovery. 
In such applications, one wants to recover functions which are sparse in a Fourier spectrum, albeit with its active frequencies grouped together in small sub-bands. 
A great advantage of group-sparse and block-sparse recovery methods is that it allows for sampling rates which are far below the traditional Nyquist sampling rate. 
However, no attempt to use structured compressed sensing with prior support information has been made so far.

The goal of this note is two-fold: first we try to bridge the gap between structured sparsity and sparse recovery with (partial, prior) support information. 
Second we apply our methods to \textit{uncertainty quantification} (UQ) in high-dimensional parametric (elliptic) PDEs. 
The latter has been a growing area of research in the last decade where methods based on 
adaptive stochastic Galerkin methods, as developed
in \cite{EGSZ14_1230,EGSZ14_1045,G13_531},
reduced basis approaches (see, eg., \cite{BCDDPW11,BufMadPat12}),
adaptive Smolyak discretizations \cite{SS13_813,SS14_1117}, 
adaptive interpolation methods \cite{chkifa2014polyDownward} 
as well as sampling methods \cite{TangIacc2014} have been analyzed.
UQ generally deals with computing moments of a \textit{quantity of interest} (QoI) of a certain physical phenomenon. 
QoI are often real-valued mapping computed from solutions of a certain PDE. 
One may consider for instance an average value of heat in a material. 
This is a challenging problem due to the (potential) high-dimensionality of the parameter space and requires numerical integration in very high-dimensions; considering the point wise approximation simply adds to the computational complexity of the problem. 
In this paper, we see that it is indeed possible to use our approximation techniques to compute pointwise estimations of solutions to families of operators indexed by countably many parameters. 
Under rather general assumptions (a generalization of a uniform ellipticity assumption), we can compute an approximation of solution uniformly for all parameters, by simply using joint sparsity of a generalized polynomial chaos expansion. 
Finally, computing moments of \textit{any} QoI can be done a posteriori without the need of numerically solving costly PDEs. 

A great advantage of the approach presented -- shall it be in the context of parametric PDEs or more generally for vector valued function approximation -- is that it can easily be computed in parallel. 
Up to a single, relatively computationally cheap, step, all computations are independent from one another. 
This is particularly interesting for problems in which the dimensionality of the spatial coordinate is higher than one. 

As suggested above, the paper combines group sparsity with prior support information in the form of weights. 
Signal recovery with partial support information has already been studied in the past, among others using certain hard-coded weights~\cite{jacques2010CSwSupp,khajehnejad2011wCS,friedlander2012recovering} and in form of a Bayesian problem~\cite{xu2010compressive}. 
Our work heavily relies on the work of Rauhut and Ward~\cite{Rauhut13wCS} in which weighted sparse approximation of real-valued functions is analyzed. 
In Section~\ref{sec:weightedblockCS} we introduce our general signal model and review some necessary basics about compressed sensing. 
The section ends with some examples in which such a structure may be useful. 
Section~\ref{sec:recovery} derives results for which recovery is possible, under certain conditions on the sensing matrix $A$. 
Section~\ref{sec:matrices} shows the existence of matrices which fulfill the conditions introduced in the previous section. 
We concentrate mainly on two types: 1) matrices in which all the entries are generated from independent (not necessarily identical) sub-Gaussian random variables, and 2) matrices obtained by tensor products of the identity and matrices obtained by sampling orthonormal systems. 
Our results for the former case generalize known results for the Gaussian, unweighted case and allows to add prior information to the non-linear recovery procedure. 
The analysis of the latter shows no strong advantages over known literature, but are important in order to derive the recovery of the full solution to a high-dimensional parametric PDEs, which we derive in Section~\ref{sec:PDEsExamples}.
This whole last section is dedicated to analyzing this problem and should be of independent interest in computational sciences. 
Our results, whose pinnacle is stated as Theorem~\ref{thm:EDPrecovery}, can be summarized as follows:
\begin{atheorem}
Assume $A(\ybs)$ is an affine family of operators parametrized by $\ybs$. 
Assume moreover that there exists a $p \in (0,1)$ and a sequence of weights $\vbf = (v_j)_{j \in \Nbb}$ such that $A$ is compressible and \textit{summable} (these two notions will be made precise in due time) in an $\ell^p_\vbf$ space. 
Then there exists a sequence of weights $\omega(\vbf)$ such that the solution to an $\ell_\omega^1$ minimization problem yields a solution $\wf$ satisfying the following approximation bounds 
$$
\|u(\ybs) - \wu(\ybs)\| \leq Cs^{1-1/p}\|u\|_{\ell^p_\omega} + D\varepsilon, \quad \text{uniformly for all } \ybs.
$$
\end{atheorem}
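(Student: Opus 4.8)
The statement is a synthesis of Sections~\ref{sec:recovery}--\ref{sec:PDEsExamples}, and the plan is to prove it by chaining their outputs. First I would \emph{discretise}: fix a weighted sparsity level $s$, expand $\ybs\mapsto u(\ybs)$ in the orthonormal system $(T_\nu)_{\nu\in\Lambda}$ attached to the parameter distribution (tensorised Legendre or \Tscheb{} polynomials), $u(\ybs)=\sum_{\nu}u_\nu T_\nu(\ybs)$ with Hilbert-space valued coefficients $u_\nu$, and draw $m$ i.i.d.\ sample parameters $\ybs^{(1)},\dots,\ybs^{(m)}$ from that distribution. This turns the abstract identity $Au=b$ into a finite block system $A\ubf=\bbf+\ebf$, where $A=(m^{-1/2}T_\nu(\ybs^{(i)}))_{i,\nu}$ acts blockwise on the spatial component, $\bbf$ collects the numerically computed snapshots, and $\|\ebf\|$ is governed by the per-snapshot accuracy $\varepsilon$.

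Next I would pass from \emph{operator summability to coefficient summability and fix the weights}. Using the affine dependence of $A(\ybs)$ on $\ybs$ together with the compressibility and summability hypotheses, one shows that the sequence $(\|u_\nu\|)_\nu$ lies in a weighted $\ell^p_\vbf$ ball; for affine-parametric operators this is the weighted analogue of the holomorphy / Neumann-series estimates of Cohen--DeVore--Schwab type, and this is precisely where the exact meaning of ``compressible and summable'' is consumed. I then set the recovery weights $\omega=\omega(\vbf)$ to be a constant multiple of the pointwise maximum of $\|T_\nu\|_\infty$ and $v_\nu$: this is the balancing choice of Rauhut--Ward transplanted to the block setting --- large enough that the weighted block RIP below has sample complexity driven by the \emph{weighted} sparsity, small enough that $\|\ubf\|_{\ell^p_\omega}<\infty$ --- and since $\omega_\nu\ge\|T_\nu\|_\infty$ we automatically get $|T_\nu(\ybs)|\le\omega_\nu$ for all $\ybs$.

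Then I would invoke the two core results. By Section~\ref{sec:matrices}, with the weights just chosen, the random blockwise matrix $A$ satisfies the weighted block restricted isometry property of order $2s$ with constant below the recovery threshold, on an event of probability $\ge 1-\gamma$, as soon as $m\gtrsim s\cdot\max(\log^{3}s\,\log|\Lambda|,\log(1/\gamma))$ up to logarithmic factors. On that event, the minimiser $\wf=\hat{\ubf}$ of weighted $\ell^1_\omega$ minimisation under the constraint $\|A\zbf-\bbf\|\le\varepsilon$ obeys, by the recovery estimate of Section~\ref{sec:recovery}, $\|\ubf-\hat{\ubf}\|_{\omega,1}\le C\,\sigma_s(\ubf)_{\omega,1}+D\varepsilon$ (the constant $D$ absorbing the $\sqrt{s}$ of the noise propagation, consistent with the statement), and a weighted Stechkin estimate bounds $\sigma_s(\ubf)_{\omega,1}\le C\,s^{1-1/p}\|\ubf\|_{\ell^p_\omega}$. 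Back-substituting, for every $\ybs$ one has $\|u(\ybs)-\wu(\ybs)\|=\|\sum_\nu(u_\nu-\hat u_\nu)T_\nu(\ybs)\|\le\sum_\nu|T_\nu(\ybs)|\,\|u_\nu-\hat u_\nu\|\le\|\ubf-\hat{\ubf}\|_{\omega,1}$, so chaining the three bounds gives the claim uniformly in $\ybs$, with $\|u\|_{\ell^p_\omega}$ read as the weighted $\ell^p$ norm of $(\|u_\nu\|)_\nu$.

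The hard part is the weighted block RIP: getting a sample complexity \emph{linear} (up to logs) in $s$ rather than in $|\Lambda|$. One must control the supremum of an empirical (vector-valued) chaos process over the weighted-block-$s$-sparse vectors, and the weights play a double role --- large enough to absorb the growth of $\|T_\nu\|_\infty$ with $|\nu|$ (severe for Legendre systems), small enough to keep the $\ell^p_\omega$ norm finite --- so the delicate point is extracting a clean, usable bound from that tension while respecting the block (rather than plain) sparsity geometry. A secondary, application-specific obstacle is checking in Section~\ref{sec:PDEsExamples} that a ``weighted uniform ellipticity'' assumption really does yield the weighted $\ell^p_\vbf$ summability required in the second step.
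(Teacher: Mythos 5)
Your architecture matches the paper's: sample from the orthogonalization measure, form the blockwise sampling matrix, transfer the operator summability to weighted $\ell^p_\vbf$ summability of the Hilbert-valued coefficients (the paper's Theorem~\ref{thm:summability}, with the product weights $\omega_\nu=2^{\|\nu\|_0/2}\vbf^\nu$ rather than your max, but playing the same double role), invoke the weighted block RIP and the $\ell^1_\omega$ recovery guarantee, convert $\sigma_s$ via Stechkin, and pass to the uniform-in-$\ybs$ bound through $\|T_\nu\|_\infty\le\omega_\nu$. Two remarks. First, the step you flag as the hard part is in fact the easy one here: the paper does not control a chaos process over weighted-block-sparse vectors directly, but proves (Theorem~\ref{thm:equivalenceFrobTrad}) that $A$ satisfies the scalar weighted RIP if and only if $A\otimes I$ satisfies the weighted \emph{block} RIP, so Theorem~\ref{thm:wbripBOS} follows immediately from the known Rauhut--Ward bound for bounded orthonormal systems; no new concentration argument is needed.

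The genuine gap is in your very first step. You ``expand $u(\ybs)=\sum_{\nu\in\Lambda}u_\nu T_\nu(\ybs)$'' over a finite $\Lambda$, but the hypotheses only give weighted $\ell^p$ summability over the countable set $\Fcal$, and the samples $b^{(i)}$ approximate the \emph{full} solution $u(\ybs^{(i)})=u_\Lambda(\ybs^{(i)})+u_R(\ybs^{(i)})$, not its projection onto $\operatorname{span}\{T_\nu:\nu\in\Lambda\}$. The tail $u_R$ therefore enters the data-fidelity constraint as additional, non-adversarially-bounded noise at the sample points, and one must show that with high probability $\sum_{\ell}\|u_R(\ybs^{(\ell)})\|_\Xcal^2\le m\varepsilon^2$. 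This is the content of Theorem~\ref{thm:approxInfDim}: a Bernstein concentration argument for the random variables $\|u_R(\ybs^{(\ell)})\|_\Xcal^2$, combined with Stechkin and the defining property $\omega_\nu^2>s/2$ on $R=\Fcal\setminus\Lambda$ to bound $\|u_R\|_{\Xcal,1}^{(\omega)}$; it is also why the optimization constraint in~\eqref{eq:minProbEDP} reads $2\sqrt{m}\varepsilon$ rather than $\sqrt{m}\varepsilon$, and why $\|u_R\|_{L^\infty}$ reappears as an extra term absorbed into the final constants. Without this step your chain of inequalities applies only to functions exactly supported on $\Lambda$, which begs part of the question; with it, your proposal coincides with the paper's proof.
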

The meaning of the various norms will be clarified in the later sections. 
This theorem basically ensures us the existence of a tractable solution such that all parametric (in $\ybs$) solutions can be computed for all spatial coordinates (i.e. for all $x \in \Omega$) \jlb{with uniform (with respect to $\ybs$) error bounds}.

\section{Structured sparsity and weighted sensing}
\label{sec:weightedblockCS}
This section aims at introducing/recalling tools from compressed sensing and derive their use in the particular signal model of weighted group sparse signals.  

\subsection{Signal model}

\begin{adef} 
A vector $\xbf \in \Rbb^N$ is said to have a $\Bcal$ block structure if it is interpreted as a concatenation of $B$ vectors $\xbf = \left( \xbf[b] \right)_{b \in \Bcal}$, where $\xbf[b] \in \Rbb^{d_b}$ with $d_1 +d_2 + \cdots + d_B = N$. 
\end{adef}

As it is common, we can define block-based norms, for some $p,q > 0$,
\begin{equation*}
    \|\xbf\|_{q,p}^{(\Bcal)} := \left(\sum_{b \in \Bcal}\|\xbf[b]\|_q^p\right)^{1/p}.
\end{equation*}
The superscript $\Bcal$ is emphasized here to remember that the norm of a block signal (obviously) depends on the partition chosen. 
To avoid overly complicated notations, we drop this from now on, but the reader should keep this reliance in mind. 
In particular, as $p \searrow 0$ we obtain the block sparsity of a signal $\xbf$
\begin{equation*}
    \|\xbf\|_{q,0} = \left| \left\{ b \in \Bcal: \|\xbf[b]\|_q \neq 0 \right\} \right|.
\end{equation*}
This definition, while still dependent on the block structure $\Bcal$ chosen, is independent of the inner $q$-norm.

Note that one could choose to extend this definition and what follows to the case of infinite (but countable) dimensional vectors by simply indexing over a countable set $\Lambda$ instead of $\Bcal$. 
This is an interesting topic indeed from a theoretical perspective but cannot be directly implemented on a computer. 
We leave it to the interested reader to convince themselves that all results written in this paper extend easily and show in Section~\ref{sec:PDEsExamples} how one may truncate the infinite expansion, while keeping the accuracy in the recovery procedures in the context of numerical approximation of solutions of parametric PDEs.


As it was done for traditional compressed sensing, one can also develop a theory of weighted compressed sensing when dealing with the block sparse signal model. 
To this end and throughout this note, given a block structure $\Bcal$ containing $B$ blocks, we consider a sequence of weights $\omega := (\omega_1, \cdots, \omega_B)$\footnote{Again, one could write $\omega = (\omega_j)_{j \in \Lambda}$ and extend everything to a countable family of blocks.} of real numbers $\omega_i \geq 1$. 
The $p$-block-norms can be redefined in a weighted fashion
\begin{equation*}
    \|\xbf\|_{q,p}^ {(\omega)} := \left(\sum_{b \in \Bcal}\omega_b^{2-p}\| \xbf[b] \|_q^p\right)^{1/p}.
\end{equation*}
Note here that we simplify the exposition by writing $\omega_b$ for a $b$ either representing the index in the sequence of blocks, or the block directly.
And similarly, the weighted block sparsity is defined as 
\begin{equation*}
    \|\xbf\|_{q,0}^{(\omega)} = \sum_{b \in \Bcal: \xbf[b] \neq 0}\omega_b^2.
\end{equation*}
This weighted block sparsity being independent of the inner norm chosen, we will simply write $\|\xbf\|_0^{(\omega)}$ or $\omega(\xbf)$.
Before we dig deeper in the results, we have to introduce another notion, that of \emph{block-support}. 
It is defined as 
\begin{equation}
\label{def:bsupp}
\bsupp(\xbf) := \{ b \in \Bcal: \xbf[b] \neq 0 \}.
\end{equation}
Given a block-support set $S \subset \{1, \cdots, B \}$, its weighted \jlb{cardinality} is defined as 
\begin{equation*}
\omega(S) := \sum_{i \in S} \omega_i^2.
\end{equation*}
The notation $\xbf[S]$ will denote either the (block) vector equal to $\xbf$ on each block with indices in $S$ and $0$ elsewhere, or the extraction of the blocks indexed by $S$. 
The intentions are clear in the context. 
Throughout, we want to approximate signals in 
\begin{equation}
\label{eq:signalSpace}
S_{q,p}^{(\omega)} := \{ \xbf \in \Rbb^N: \|\xbf\|_{q,p}^{(\omega)} < \infty \}.
\end{equation}

As is usual in non-linear approximation, we assess the quality of our estimations via Lebesgue-type inequalities and hence extend the definition of best-$s$-term $\ell_\omega^p$ approximation to the weighted block case:
$$
\sigma_s(\xbf)_{q,p}^{(\omega)} = \inf_{\zbf: \omega(\zbf) \leq s}\|\xbf-\zbf\|_{q,p}^{(\omega)}.
$$
Finding the best weighted approximation is an NP-hard problem. 
An easier approximation to find is the so-called quasi-best weighted $s$ term approximation defined as follows. 
Let $\widetilde{\xbf}$ be the non-decreasing weighted block rearrangements of $\xbf$. I.e., there exists a permutation $\pi$ in $\{1, \cdots, B\}$ such that $\widetilde{\xbf}[i] = \xbf[\pi(i)]$ with $\|\xbf[\pi(i)]\|^p\omega(\pi(i))^{-p} \geq \|\xbf[\pi(j)]\|^p\omega(\pi(j))^{-p}$ for all $j \leq i$. 
For a (weighted) sparsity value of $s \geq \|\omega\|_\infty^2$, define $k_s := \max\{k: \sum_{i=1}^k \omega_{\pi(i)}^2 \leq s\}$ and then $S := \{\pi(1), \cdots, \pi(k_s)\}$.
The quasi-best weighted $s$ block approximation is then defined as $\widetilde{\xbf}[S]$. 
It follows that the best quasi-approximation fulfills
\begin{equation*}
\widetilde{\sigma}_s(\xbf)_{q,p}^{(\omega)} := \|\xbf - \widetilde{\xbf}[S]\|_{q,p}^{(\omega)} \geq \sigma_s(\xbf)_{q,p}^{(\omega)}.
\end{equation*}
We also have the following inequality: 
\begin{equation}
\label{eq:estimateQuasiweighted}
\widetilde{\sigma}_{3s}(\xbf)_{q,p}^{(\omega)} \leq \sigma(\xbf)_{q,p}^{(\omega)}. 
\end{equation}
Verifying inequality~\eqref{eq:estimateQuasiweighted} is done in a similar way as in~\cite{Rauhut13wCS} with the appropriate changes. 
We postpone the details to Appendix~\ref{app:quasiapprox} for the curious readers, as they merely are adaptions of known proofs. 

Stechkin's approximation bound can also be extended to the weighted block case. 
\begin{aprop}
\label{prop:Stechkin}
Let $\xbf$ be a $B$-block signal and let $\omega = (\omega_1, \cdots, \omega_B)$ be a sequence of weights with $\omega_i \geq 1$. 
Let $s > \|\omega\|_\infty^2$ denote a weighted sparsity. 
Then for $q < p \leq 2$ and $r$ denoting any norm on the blocks, the following Stechkin's bound holds
\begin{equation}
\label{eq:Stechkin}
\sigma_s(\xbf)_{r,p}^{(\omega)} \leq \widetilde{\sigma}_s(\xbf)_{r,p}^{(\omega)} \leq (s - \|\omega\|_\infty^2)^{\frac{1}{p} - \frac{1}{q}}\|\xbf\|_{r,q}^{(\omega)}.
\end{equation}
\end{aprop}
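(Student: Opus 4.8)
The plan is to follow the classical Stechkin argument, adapted to the weighted block setting, and reduce everything to the case of the quasi-best approximation set $S$ produced by the greedy procedure described just above the proposition. The first inequality $\sigma_s(\xbf)_{r,p}^{(\omega)} \leq \widetilde{\sigma}_s(\xbf)_{r,p}^{(\omega)}$ is immediate from the definitions, since $\omega(S) \leq s$ by construction of $k_s$, so $\widetilde{\xbf}[S]$ is an admissible competitor in the infimum defining $\sigma_s$. The work is therefore entirely in the second inequality: bounding $\|\xbf - \widetilde{\xbf}[S]\|_{r,p}^{(\omega)} = \big(\sum_{i > k_s} \omega_{\pi(i)}^{2-p}\|\xbf[\pi(i)]\|_r^p\big)^{1/p}$ by $(s - \|\omega\|_\infty^2)^{1/p - 1/q}\|\xbf\|_{r,q}^{(\omega)}$.

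First I would write, for each tail index $i > k_s$, the term $\omega_{\pi(i)}^{2-p}\|\xbf[\pi(i)]\|_r^p = \big(\omega_{\pi(i)}^{-p}\|\xbf[\pi(i)]\|_r^p\big)^{1 - q/p} \cdot \big(\omega_{\pi(i)}^{2-q}\|\xbf[\pi(i)]\|_r^q\big)^{q/p}$, exploiting that $q < p$. The non-increasing rearrangement property $\omega(\pi(i))^{-p}\|\xbf[\pi(i)]\|_r^p \leq \omega(\pi(j))^{-p}\|\xbf[\pi(j)]\|_r^p$ for $j \leq i$ lets me bound the first factor uniformly: the $i$-th largest quantity $a_i := \omega_{\pi(i)}^{-p}\|\xbf[\pi(i)]\|_r^p$ satisfies $a_i \leq \frac{1}{\Omega_i}\sum_{j \leq i} \omega_{\pi(j)}^2 a_j$ where $\Omega_i := \sum_{j \leq i}\omega_{\pi(j)}^2$, because each $a_j$ with $j \leq i$ is at least $a_i$; hence $a_i \leq \Omega_i^{-1}\sum_{j=1}^i \omega_{\pi(j)}^{2-p}\|\xbf[\pi(j)]\|_r^p \leq \Omega_i^{-1}(\|\xbf\|_{r,p}^{(\omega)})^p$. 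More useful is to run the analogous estimate with the $q$-norm in place of the $p$-norm, giving $a_i \leq \Omega_i^{-1}\big(\|\xbf\|_{r,q}^{(\omega)}\big)^{?}$ after a Hölder step; I would then substitute, pull out the $(\|\xbf\|_{r,q}^{(\omega)})$-factor raised to the appropriate power, and be left with a sum of the form $\sum_{i > k_s} \omega_{\pi(i)}^2 \Omega_i^{-(1-q/p)}$ (after recognizing $\omega_{\pi(i)}^{2-q}\|\xbf[\pi(i)]\|_r^q = \omega_{\pi(i)}^2 a_i^{q/p}$ and inserting the bound on $a_i^{1-q/p}$).

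The crux is then the purely numerical estimate $\sum_{i > k_s} \omega_{\pi(i)}^2 \Omega_i^{-\alpha} \leq C\, \Omega_{k_s+1}^{1-\alpha}$ with $\alpha = 1 - q/p \in (0,1)$, which one proves by comparing the sum to the integral $\int \Omega^{-\alpha}\,d\Omega$: since the weights $\omega_{\pi(i)}^2$ are the increments of the non-decreasing partial sums $\Omega_i$, and $\omega_i \geq 1$ controls the jump sizes, a standard telescoping/integral-comparison argument gives $\sum_{i > k_s}\omega_{\pi(i)}^2\Omega_i^{-\alpha} \leq \frac{1}{1-\alpha}\Omega_{k_s}^{1-\alpha}$ up to harmless constants, and then $\Omega_{k_s} \leq s$ while the correction $s - \|\omega\|_\infty^2$ appears because the last admissible block can push $\Omega$ up by at most $\|\omega\|_\infty^2$, so $\Omega_{k_s} \geq$ (something) forces the cleaner bound to be stated with $s - \|\omega\|_\infty^2$. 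Finally I would collect the exponents: $1 - \alpha = q/p$, and combined with the overall power $1/p$ coming from the outer root, the exponent on $(s - \|\omega\|_\infty^2)$ works out to $\frac{1}{p} - \frac{1}{q}$ (note this is negative, consistent with $q < p$), and the $\xbf$-dependence collapses to exactly $\|\xbf\|_{r,q}^{(\omega)}$. I expect the main obstacle to be the bookkeeping in the integral-comparison step — specifically, justifying the passage from the discrete weighted sum to $\Omega^{1-\alpha}$ when the weights are not all equal to $1$, and tracking exactly where the $\|\omega\|_\infty^2$ subtraction must enter to keep the inequality valid for all $i > k_s$ rather than only asymptotically. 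This is the same place the original argument in~\cite{Rauhut13wCS} requires care, so I would mirror that treatment with the inner block norm $r$ carried along inertly, since no property of $r$ beyond homogeneity is used.
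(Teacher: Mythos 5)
Your overall strategy is the right one and matches the paper's in spirit: split each tail term by a H\"older-type factorization, control the decreasing weighted rearrangement $a_i:=\omega_{\pi(i)}^{-p}\|\xbf[\pi(i)]\|_r^p$ by a weighted average over the preceding blocks, and locate the $s-\|\omega\|_\infty^2$ correction in the maximality of $k_s$. (Minor algebra note: in your displayed factorization the second factor should carry exponent $1$, not $q/p$; your subsequent identification $\omega_{\pi(i)}^{2-p}\|\xbf[\pi(i)]\|_r^p=a_i^{1-q/p}\cdot\omega_{\pi(i)}^{2-q}\|\xbf[\pi(i)]\|_r^q$ is the correct one.) However, the final assembly has a genuine gap. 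You reduce to the purely numerical sum $\sum_{i>k_s}\omega_{\pi(i)}^2\Omega_i^{-\alpha}$ with $\alpha=1-q/p\in(0,1)$ and claim an integral-comparison bound $\le C\,\Omega_{k_s}^{1-\alpha}$. That estimate is false: for $\alpha<1$ the comparison integral $\int_{\Omega_{k_s}}^{\Omega_B}\Omega^{-\alpha}\,\dif{\Omega}$ scales like $\Omega_B^{1-\alpha}$, i.e.\ like the \emph{total} weight $\omega(\Bcal)^{q/p}$ of all blocks, not like $s^{q/p}$. Concretely, with all $\omega_i=1$ the sum is $\sum_{i>k_s}i^{-(1-q/p)}$, which is not $O(k_s^{q/p})$ and diverges as $B\to\infty$. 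The error enters when you discard the $\xbf$-dependence of the second factor and replace it by a bound that does not decay along the tail.

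The fix — and this is what the paper's appendix proof does — is to \emph{keep} the tail's $q$-mass intact: bound only the first factor uniformly, $a_i^{1-q/p}=\bigl(a_i^{q/p}\bigr)^{p/q-1}\le\bigl(\Omega_i^{-1}(\|\xbf\|_{r,q}^{(\omega)})^q\bigr)^{p/q-1}\le(s-\|\omega\|_\infty^2)^{1-p/q}(\|\xbf\|_{r,q}^{(\omega)})^{p-q}$ for $i>k_s$ (using your own averaging lemma applied to $a_i^{q/p}=\omega_{\pi(i)}^{-q}\|\xbf[\pi(i)]\|_r^q$ and $\Omega_i\ge\Omega_{k_s}\ge s-\|\omega\|_\infty^2$), and then sum the untouched second factors, $\sum_{i>k_s}\omega_{\pi(i)}^{2-q}\|\xbf[\pi(i)]\|_r^q\le(\|\xbf\|_{r,q}^{(\omega)})^q$. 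This yields $(\widetilde{\sigma}_s)^p\le(s-\|\omega\|_\infty^2)^{1-p/q}(\|\xbf\|_{r,q}^{(\omega)})^{p}$ directly, with no summation lemma needed. The paper phrases the same computation as extracting $\max_{j\notin S}\bigl(\|\xbf[\Bcal_j]\|_r\,\omega_j^{-1}\bigr)^{p-q}$ from the tail sum and bounding that maximum by the convex combination $\sum_{k\in S}\lambda_k\|\xbf[\Bcal_k]\|_r^q\omega_k^{-q}$ with $\lambda_k=\omega(S)^{-1}\omega_k^2$; your averaging inequality for $a_i$ is exactly this lemma, so the repair is a one-line change rather than a new idea.
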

Once again, the proof is postponed to Appendix~\ref{app:quasiapprox}.

\subsection{Some particular examples of block structures}

The abstract structure described above gives rise to some particular examples. 

\begin{enumerate}
\item Traditional weighted compressed sensing: assuming all the sub-vectors to be composed of a single value gives back the weighted compressed sensing setting. 
Moreover, setting all weights to $1$ yields the original compressed sensing set-up. 
\item Similarly, keeping the block-structure and uniformly setting all weights equal to a given constant yields the by-now classical theory of structured sparsity in a union of subspaces. 
\item Joint sparsity, in which a set of unknown vectors of same size share a common sparsity pattern can also be modeled. 
Consider the matrix $\Xbf = [ \xbf[b]^T ]_{b \in \Bcal}$ created by stacking row-wise all the elements $\xbf[b]$. 
Then assuming a block sparse structure over $\xbf$ implies that the columns of $\Xbf$ share the same sparsity pattern. 
This is further developed in Section~\ref{sec:PDEsExamples} where we analyze the use of this weighted block sparse recovery for high-dimensional parametric PDEs.
\end{enumerate}

\section{Recovery guarantees}
\label{sec:recovery}
This section aims at extending and merging known results from block sparsity on the one hand and weighted sparsity on the other, to the framework of weighted block sparse signals. 
\begin{adef}
\label{def:wbrnsp}
Let $\Bcal$ be a block structure for $\Rbb^N$ with the associated weight sequence $\omega = (\omega_b)_{b \in \Bcal}$.
A matrix $A \in \Rbb^{m \times N}$ is said to fulfill the weighted block $\ell^p$ robust null space property ($\ell_\omega^p-$BRNSP) with respect to $\Bcal$ of order $s \geq \|\omega\|_\infty$ and constants $\rho \in (0,1)$ and $\tau > 0$ if 
\begin{equation*}
\| \xbf[S]\|_{2,p}^{(\omega)} \leq \frac{\rho}{s^{1-1/p}}\|\xbf[S^c]\|_{2,1}^{(\omega)} + \tau\|A\xbf\|_2
\end{equation*}
holds for all \jlb{$\xbf \in \Rbb^N$ and all} block support set $S$ such that $\omega(S) \leq s$.
\end{adef}

Not surprisingly, the following result shows that the $\ell^1_\omega$-BRNSP is necessary and sufficient for a robust recovery via weighted block $\ell_\omega^1-$minimization.
\begin{aprop}
\label{prop:lemmaL1NSP}
Let $B \in \Nbb$ be a number of blocks of $\Rbb^N$. 
Suppose that $A$ fulfills the $\ell_\omega^1$-BRNSP of order $s$ with constant $\tau > 0$ and $\rho \in (0,1)$. Then, for any $\zbf, \xbf \in \Rbb^{N}$, we have 
\begin{equation*}
\|\zbf-\xbf\|_{2,1}^{(\omega)} \leq \frac{1+\rho}{1-\rho}\left( \|\zbf\|_{2,1}^{(\omega)} - \|\xbf\|_{2,1}^{(\omega)} + 2\sigma_s(\xbf)_{2,1}^{(\omega)} \right) + \frac{2\tau}{1-\rho}\|A(\xbf-\zbf)\|_2.
\end{equation*}
\end{aprop}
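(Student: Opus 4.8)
The plan is to follow the classical route showing that a robust null space property implies a Lebesgue-type error bound, transcribed to the weighted block norms $\|\cdot\|_{2,1}^{(\omega)}$. Set $\vbf := \xbf - \zbf$; since $\|\cdot\|_{2,1}^{(\omega)}$ is symmetric it suffices to bound $\|\vbf\|_{2,1}^{(\omega)}$. First I would fix a block support set $S \subset \{1,\dots,B\}$ with $\omega(S) \le s$ realizing the best weighted block $s$-term approximation, i.e. $\|\xbf[S^c]\|_{2,1}^{(\omega)} = \sigma_s(\xbf)_{2,1}^{(\omega)}$; such an $S$ exists because there are finitely many block supports and the optimal competitor in the definition of $\sigma_s$ may be taken to agree with $\xbf$ on its block support.

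Next I would estimate the off-support part $\|\vbf[S^c]\|_{2,1}^{(\omega)}$. Writing $\zbf = \xbf - \vbf$ and using that $\|\cdot\|_{2,1}^{(\omega)}$ is additive over the disjoint block partition $S \cup S^c$, the blockwise triangle inequality gives $\|\zbf[S]\|_{2,1}^{(\omega)} \ge \|\xbf[S]\|_{2,1}^{(\omega)} - \|\vbf[S]\|_{2,1}^{(\omega)}$ and $\|\zbf[S^c]\|_{2,1}^{(\omega)} \ge \|\vbf[S^c]\|_{2,1}^{(\omega)} - \|\xbf[S^c]\|_{2,1}^{(\omega)}$. Adding these, rearranging, and using $\|\xbf\|_{2,1}^{(\omega)} = \|\xbf[S]\|_{2,1}^{(\omega)} + \|\xbf[S^c]\|_{2,1}^{(\omega)}$ together with $\|\xbf[S^c]\|_{2,1}^{(\omega)} = \sigma_s(\xbf)_{2,1}^{(\omega)}$ yields
$$\|\vbf[S^c]\|_{2,1}^{(\omega)} \le \|\vbf[S]\|_{2,1}^{(\omega)} + \Theta, \qquad \Theta := \|\zbf\|_{2,1}^{(\omega)} - \|\xbf\|_{2,1}^{(\omega)} + 2\sigma_s(\xbf)_{2,1}^{(\omega)}.$$
Now I would invoke the $\ell_\omega^1$-BRNSP: since $p=1$ makes $s^{1-1/p}=1$ and $\|\cdot\|_{2,p}^{(\omega)} = \|\cdot\|_{2,1}^{(\omega)}$, and since $\omega(S) \le s$, Definition~\ref{def:wbrnsp} gives $\|\vbf[S]\|_{2,1}^{(\omega)} \le \rho\|\vbf[S^c]\|_{2,1}^{(\omega)} + \tau\|A\vbf\|_2$. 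Substituting into the previous display and using $\rho<1$ to solve for the tail gives $\|\vbf[S^c]\|_{2,1}^{(\omega)} \le (1-\rho)^{-1}(\Theta + \tau\|A\vbf\|_2)$. Finally, $\|\vbf\|_{2,1}^{(\omega)} = \|\vbf[S]\|_{2,1}^{(\omega)} + \|\vbf[S^c]\|_{2,1}^{(\omega)} \le (1+\rho)\|\vbf[S^c]\|_{2,1}^{(\omega)} + \tau\|A\vbf\|_2$ (BRNSP once more), and inserting the tail bound the constants combine as $\tfrac{(1+\rho)\tau}{1-\rho} + \tau = \tfrac{2\tau}{1-\rho}$, producing exactly the claimed inequality since $\|A\vbf\|_2 = \|A(\xbf-\zbf)\|_2$.

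I do not expect a serious obstacle: this is a direct adaptation of the unweighted, unblocked argument. The only points needing care are the bookkeeping ones — that $\|\cdot\|_{2,1}^{(\omega)}$ splits additively over a block partition (immediate from the definition), that the minimizer $S$ for $\sigma_s$ can be chosen with $\omega(S)\le s$ so the BRNSP is applicable to it, and that the $p=1$ specialization removes the $s^{1-1/p}$ factor. I would close with the remark that taking $\zbf$ to be a minimizer of $\|\cdot\|_{2,1}^{(\omega)}$ over $\{\zbf : \|A\zbf - b\|_2 \le \eta\}$ with $b = A\xbf + e$, $\|e\|_2 \le \eta$, one has $\Theta \le 2\sigma_s(\xbf)_{2,1}^{(\omega)}$ and $\|A(\xbf-\zbf)\|_2 \le 2\eta$, recovering the standard robust recovery statement.
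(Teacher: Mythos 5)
Your proposal is correct and follows essentially the same route as the paper's own proof: the same choice of $S$ realizing $\sigma_s(\xbf)_{2,1}^{(\omega)}$, the same triangle-inequality step yielding $\|\vbf[S^c]\|_{2,1}^{(\omega)} \leq \|\vbf[S]\|_{2,1}^{(\omega)} + \Theta$, the same two applications of the $\ell^1_\omega$-BRNSP, and the same combination of constants giving $\frac{2\tau}{1-\rho}$. Nothing is missing.
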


\begin{proof}
Let $S$ be the block-support of the best weighted $s$ block approximation, i.e. $S$ is such that $\sigma_s(\xbf)_{2,1}^{(\omega)} = \|\xbf - \xbf[S]\|_{2,1}^{(\omega)} = \|\xbf[S^c]\|_{2,1}^{(\omega)}$.
$$
\|\xbf\|_{2,1}^{(\omega)} + \left\| \left( \xbf-\zbf \right)[S^c] \right\|_{2,1}^{(\omega)} \leq 2\|\xbf[S^c]\|_{2,1}^{(\omega)} + \|\xbf[S]\|_{2,1}^{(\omega)} + \|\zbf[S^c]\|_{2,1}^{(\omega)} \leq 2\sigma_s(\xbf)_{2,1}^{(\omega)} + \|\left(\xbf-\zbf\right)[S]\|_{2,1}^{(\omega)} + \|\zbf\|_{2,1}^{(\omega)}.
$$ 
With $\vbf := \xbf-\zbf$ it follows directly that 
\begin{equation}
\label{eq:estimateNSPmiddle}
\|\vbf[S^c]\|_{2,1}^{(\omega)} \leq \|\vbf[S]\|_{2,1}^{(\omega)} + \|\zbf\|_{2,1}^{(\omega)} - \|\xbf\|_{2,1}^{(\omega)} + 2 \sigma_s(\xbf)_{2,1}^{(\omega)}.
\end{equation}
Using Def.~\ref{def:wbrnsp} of the $\ell^p_\omega-$BRNSP for $p = 1$, it follows 
$$
\|\vbf[S]\|_{2,1}^{(\omega)} \leq \rho\|\vbf[S^c]\|_{2,1}^{(\omega)} + \tau\|A\vbf\|_2
$$ 
Plugging back in~\eqref{eq:estimateNSPmiddle} yields 
\begin{equation*}
\|\vbf[S^c]\|_{2,1}^{(\omega)} \leq \rho\|\vbf[S^c]\|_{2,1}^{(\omega)} + \tau\|A\vbf\|_2 + \|\zbf\|_{2,1}^{(\omega)} - \|\xbf\|_{2,1}^{(\omega)} + 2 \sigma_s(\xbf)_{2,1}^{(\omega)},
\end{equation*}
or equivalently, for $\rho < 1$ 
\begin{equation}
\label{eq:vsc}
\|\vbf[S^c]\|_{2,1}^{(\omega)} \leq \frac{1}{1-\rho}\left(\|\zbf\|_{2,1}^{(\omega)} - \|\xbf\|_{2,1}^{(\omega)} + \tau\|A\vbf\|_2  + 2 \sigma_s(\xbf)_{2,1}^{(\omega)}\right)
\end{equation}

Putting all the pieces together
\begin{align*}
\|\xbf - \zbf\|_{2,1}^{(\omega)} &= \|\vbf\|_{2,1}^{(\omega)} = \|\vbf[S]\|_{2,1}^{(\omega)} + \|\vbf[S^c]\|_{2,1}^{(\omega)} \\
&\stackrel{\text{Def.~\ref{def:wbrnsp}}}{\leq} \left(1+\rho\right)\|\vbf[S^c]\|_{2,1}^{(\omega)} + \tau\|A\vbf\|_{2} \\ 
&\stackrel{\text{Eq.\eqref{eq:vsc}}}{\leq} \frac{1+\rho}{1-\rho}\left(\|\zbf\|_{2,1}^{(\omega)} - \|\xbf\|_{2,1}^{(\omega)} + 2\sigma_s(\xbf)_{q,p}^{(\omega)}\right) + \frac{2\tau}{1-\rho}\|A(\zbf-\xbf)\|_2.
\end{align*}
\end{proof}

\begin{acorol}
\label{cor:l2pestimates}
Let $B \in \Nbb$ and $\Bcal = (\Bcal_1, \cdots, \Bcal_B)$ be a partition of $\{1, \cdots, N\}$ with the associated weight sequence $\omega = (\omega_1, \cdots, \omega_B)$.
Given $1 \leq p \leq q \leq 2$ suppose that $A \in \Rbb^ {m \times N}$ satisfies the $\ell_\omega^q$-BRNSP of order $s \geq \|\omega\|_\infty^2$ with constants $0 < \rho < 1$ and $\tau > 0$. 
Then for any $\xbf, \zbf \in \Rbb^N$ we have
\begin{equation}
\label{eq:l2pestimates}
\|\zbf - \xbf\|_{2,p}^{(\omega)} \leq \frac{C_\rho}{s^{1-1/p}}\left( \|\zbf\|_{2,1}^{(\omega)} - \|\xbf\|_{2,1}^{(\omega)} + 2\sigma_s(\xbf)_{2,1}^{(\omega)} \right) +  \frac{D_{\rho,\tau}}{s^{1/q-1/p}}\|A(\zbf-\xbf)\|_2,
\end{equation}
with $C_\rho = \frac{(1+\rho)^2}{1-\rho}$ and $D_{\rho,\tau} = \frac{3+\rho}{1-\rho}\tau$.
\end{acorol}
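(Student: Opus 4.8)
The plan is to deduce the $\ell^p_\omega$ error estimate from the already established $\ell^1_\omega$ bound of Proposition~\ref{prop:lemmaL1NSP} by (i) first downgrading the hypothesis from an $\ell^q_\omega$-BRNSP to an $\ell^p_\omega$-BRNSP (and, as a special case, to an $\ell^1_\omega$-BRNSP) at the cost of a factor $s^{1/p-1/q}$ in $\tau$, and (ii) then splitting $\vbf := \zbf - \xbf$ into a head, controlled by the $\ell^p_\omega$-BRNSP, and a tail, controlled by the weighted Stechkin bound of Proposition~\ref{prop:Stechkin}. Write $e := \|A\vbf\|_2$ and $\eta := \|\zbf\|_{2,1}^{(\omega)} - \|\xbf\|_{2,1}^{(\omega)} + 2\sigma_s(\xbf)_{2,1}^{(\omega)}$, so that the goal reads $\|\vbf\|_{2,p}^{(\omega)} \le C_\rho s^{1/p-1}\eta + D_{\rho,\tau}s^{1/p-1/q}e$.

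First, for any block-support $S$ with $\omega(S)\le s$ and $1 \le p \le q \le 2$, I would record the weighted Hölder inequality $\|\ybf[S]\|_{2,p}^{(\omega)} \le \omega(S)^{1/p-1/q}\|\ybf[S]\|_{2,q}^{(\omega)} \le s^{1/p-1/q}\|\ybf[S]\|_{2,q}^{(\omega)}$, which is immediate once one rewrites $\|\ybf[S]\|_{2,r}^{(\omega)} = \big(\sum_{b\in S}\omega_b^2\,(\|\ybf[b]\|_2/\omega_b)^r\big)^{1/r}$ as an $L^r$-norm over $S$ against the finite measure $b\mapsto\omega_b^2$ of total mass $\omega(S)$, and applies the usual $L^p$–$L^q$ nesting. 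Substituting this into Definition~\ref{def:wbrnsp} shows that the $\ell^q_\omega$-BRNSP of order $s$ with constants $\rho,\tau$ implies the $\ell^p_\omega$-BRNSP of order $s$ with constants $\rho$ and $\tau s^{1/p-1/q}$ (the exponent arithmetic being $s^{1/p-1/q}/s^{1-1/q}=s^{1/p-1}$); in particular, taking $p=1$ gives the $\ell^1_\omega$-BRNSP of order $s$ with constants $\rho$ and $\tau s^{1-1/q}$, and Proposition~\ref{prop:lemmaL1NSP} then yields $\|\vbf\|_{2,1}^{(\omega)} \le \frac{1+\rho}{1-\rho}\eta + \frac{2\tau s^{1-1/q}}{1-\rho}e$. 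For $p=1$ this is already the claim, so assume $1<p\le q$ in what follows.

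Next I would let $S$ be the quasi-best weighted $s$-block support of $\vbf$, so that $\omega(S)\le s$ and $\|\vbf[S^c]\|_{2,p}^{(\omega)} = \widetilde\sigma_s(\vbf)_{2,p}^{(\omega)}$, and split $\|\vbf\|_{2,p}^{(\omega)} \le \|\vbf[S]\|_{2,p}^{(\omega)} + \|\vbf[S^c]\|_{2,p}^{(\omega)}$. For the head, the $\ell^p_\omega$-BRNSP obtained above together with $\|\vbf[S^c]\|_{2,1}^{(\omega)}\le\|\vbf\|_{2,1}^{(\omega)}$ gives $\|\vbf[S]\|_{2,p}^{(\omega)} \le \frac{\rho}{s^{1-1/p}}\|\vbf\|_{2,1}^{(\omega)} + \tau s^{1/p-1/q}e$. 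For the tail, Proposition~\ref{prop:Stechkin} (with inner norm $2$ and with the roles of $q,p$ there played by $1,p$) gives $\|\vbf[S^c]\|_{2,p}^{(\omega)} \le (s-\|\omega\|_\infty^2)^{1/p-1}\|\vbf\|_{2,1}^{(\omega)}$, which I would absorb into $\frac{1}{s^{1-1/p}}\|\vbf\|_{2,1}^{(\omega)}$. Adding the two contributions and inserting the $\ell^1_\omega$ bound for $\|\vbf\|_{2,1}^{(\omega)}$, the coefficient of $\eta$ collapses to $\frac{(1+\rho)^2}{(1-\rho)\,s^{1-1/p}}$ and that of $e$ to $\tau s^{1/p-1/q}\big(\tfrac{2(1+\rho)}{1-\rho}+1\big) = \frac{(3+\rho)\tau}{1-\rho}\,s^{1/p-1/q}$, which are exactly $C_\rho s^{1/p-1}$ and $D_{\rho,\tau}s^{1/p-1/q}$.

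The conceptual content is recognising that one must run the argument simultaneously at three levels — the $\ell^q_\omega$-BRNSP for the head of $\vbf$, its $\ell^1_\omega$ consequence through Proposition~\ref{prop:lemmaL1NSP} to control $\|\vbf\|_{2,1}^{(\omega)}$, and a Stechkin-type bound for the tail — and then to chase the constants down to $C_\rho$ and $D_{\rho,\tau}$; the weighted Hölder inequality of the first step is what makes the reduction go through cleanly. The only genuinely delicate bookkeeping is that Proposition~\ref{prop:Stechkin} delivers the denominator $s-\|\omega\|_\infty^2$ rather than $s$: reconciling this with the clean $s^{1-1/p}$ in the statement — via the standing regime $s\gtrsim\|\omega\|_\infty^2$, or by running the quasi-best approximation at a slightly inflated sparsity and invoking~\eqref{eq:estimateQuasiweighted} — is where a little care is needed, but it affects none of the scaling.
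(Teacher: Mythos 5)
Your proof is correct and follows essentially the same route as the paper's: the Hölder downgrade of the $\ell^q_\omega$-BRNSP to lower exponents, the $\ell^1_\omega$ bound via Proposition~\ref{prop:lemmaL1NSP}, and a head/tail split of $\zbf-\xbf$ with the BRNSP controlling the head and a Stechkin-type bound controlling the tail, yielding the same constants. Your explicit flagging of the $s$ versus $s-\|\omega\|_\infty^2$ discrepancy in the tail estimate is a point the paper silently glosses over, but it changes nothing in substance.
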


\begin{proof}
The proof follows the one of~\cite[Theorem 4.25]{Foucart13book} with the appropriate changes to accommodate for the weighted block structure. Notice that for any $2q \geq q' \geq q$, using H\"{o}lder's inequality, the $\ell_\omega^{q'}$-BRNSP follows from the $\ell_\omega^q$-BRNSP, namely
\begin{equation}
\label{eq:lqimplieslp}
\|\vbf[S]\|_{2,q'}^{(\omega)} \leq \frac{\rho}{s^{1-1/q'}}\|\vbf[S^c]\|_{2,1}^{(\omega)} + \tau s^{1/q'-1/q}\|A\vbf\|_2
\end{equation}
for all $\vbf \in \Rbb^N$ and $S$ such that $\omega(S) \leq s$. 
Applying Proposition~\ref{prop:lemmaL1NSP} with Equation~\eqref{eq:lqimplieslp} for $q' = 1$ and upon choosing $S$ as the block support of the best weighted $s$ block approximation of $\xbf$ yields
\begin{equation}
\label{eq:lqtol1NSP}
\|\zbf-\xbf\|_{2,1}^{(\omega)} \leq \frac{1+\rho}{1-\rho}\left( \|\zbf\|_{2,1}^{(\omega)} - \|\xbf\|_{2,1}^{(\omega)} + 2\sigma_s(\xbf)_{2,1}^{(\omega)} \right) + \frac{2\tau}{1-\rho}s^{1-1/q}\|A(\zbf-\xbf)\|_2. 
\end{equation}
Now, let $T$ be the block support of the best weighted $s$ block approximation of $\zbf - \xbf$ and apply Equation~\eqref{eq:lqimplieslp} for $q' = q$ together with 
\begin{equation*}
\|\zbf - \xbf\|_{2,p}^{(\omega)} \leq \left\| (\zbf-\xbf)[T^c] \right\|_{2,p}^{(\omega)} + \left\| (\zbf-\xbf)[T] \right\|_{2,p}^{(\omega)} \leq \frac{1}{s^{1-1/p}}\|\zbf-\xbf\|_{2,1}^{(\omega)} + \left\| (\zbf-\xbf)[T] \right\|_{2,p}^{(\omega)}. 
\end{equation*}
to obtain
\begin{equation*}
\|\zbf-\xbf\|_{2,p}^{(\omega)} \leq \frac{1+\rho}{s^{1-1/p}}\|\zbf-\xbf\|_{2,1}^{(\omega)}+\frac{\tau}{s^{1/q-1/p}}\|A(\zbf-\xbf)\|_2. 
\end{equation*}
Finally, the claim follows by plugging in Equation~\eqref{eq:lqtol1NSP}.
\end{proof}

\begin{acorol}
\label{cor:nspBPDN}
For $m,N \in \Nbb$ and \jlb{letting $B \in \Nbb$ denote} the number of blocks in the partition $\Bcal$ of $\{1, \cdots, N\}$. 
In addition, assume given $\omega = (\omega_b)_{b \in \Bcal}$ a sequence of weights with $\omega_i \geq 1$. 
Assume $A \in \Rbb ^{m \times N}$ satisfies the $\ell_\omega^2$-BRNSP of order $s \geq \|\omega\|_\infty^2$ with constant $0 < \rho < 1$ and $\tau > 0$. 
For any $\xbf \in \Rbb^N$ and $\ybf = A \xbf + \ebf$ for some $\|\ebf\|_2 \leq \eta$. 
Let $\widehat{\xbf}$ be the unique solution of 
\begin{equation}
\label{eq:blockBPDN}
\begin{array}{l}
\displaystyle \min_{\zbf \in \Rbb^{N}}\|\zbf\|_{2,1}^{(\omega)} = \sum_{j = 1}^B \omega_j \|\zbf[\Bcal_j]\|_2 \\ 
\text{s. t.~ } \|A\zbf - \ybf\|_2 \leq \eta.
\end{array}
\end{equation}
Then the following error bounds on the reconstruction hold
\begin{align}
\|\xbf - \widehat{\xbf}\|_{2,1}^{(\omega)} &\leq 2C_\rho \sigma_s(\xbf)_{2,1}^{(\omega)} + 2D_{\rho,\tau} \sqrt{s}\eta \label{eq:l21estimates} \\ 
\|\xbf - \widehat{\xbf}\|_2 = \|\xbf - \widehat{\xbf}\|_{2,2}^{(\omega)} &\leq 2C_\rho \frac{\sigma_s(\xbf)_{2,1}^{(\omega)}}{\sqrt{s}} + 2D_{\rho,\tau}\eta, \label{eq:l22estimates}
\end{align}
where the constants $C_\rho$ and $D_{\rho,\tau}$ are the ones from Corollary~\ref{cor:l2pestimates}.
\end{acorol}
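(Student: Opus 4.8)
The plan is to derive both bounds directly from Corollary~\ref{cor:l2pestimates}, applied with $\zbf = \widehat{\xbf}$ and $q = 2$ (which is precisely the $\ell_\omega^2$-BRNSP hypothesis in force), using two elementary consequences of $\widehat{\xbf}$ being a minimizer of~\eqref{eq:blockBPDN}. First I would record the \emph{feasibility inequality}: since $\|A\xbf - \ybf\|_2 = \|\ebf\|_2 \leq \eta$, the vector $\xbf$ is itself feasible for~\eqref{eq:blockBPDN}, so by optimality $\|\widehat{\xbf}\|_{2,1}^{(\omega)} \leq \|\xbf\|_{2,1}^{(\omega)}$; hence the quantity $\|\widehat{\xbf}\|_{2,1}^{(\omega)} - \|\xbf\|_{2,1}^{(\omega)}$ occurring in~\eqref{eq:l2pestimates} is nonpositive and can simply be discarded. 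Second, by the triangle inequality together with the constraint $\|A\widehat{\xbf} - \ybf\|_2 \leq \eta$, one has $\|A(\widehat{\xbf} - \xbf)\|_2 \leq \|A\widehat{\xbf} - \ybf\|_2 + \|\ybf - A\xbf\|_2 \leq 2\eta$.

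Next I would specialize Corollary~\ref{cor:l2pestimates} to two choices of $p$. Taking $p = 1$ gives $s^{1-1/p} = 1$ and $s^{1/q - 1/p} = s^{-1/2}$, so~\eqref{eq:l2pestimates} reads
\[
\|\widehat{\xbf} - \xbf\|_{2,1}^{(\omega)} \leq C_\rho\bigl( \|\widehat{\xbf}\|_{2,1}^{(\omega)} - \|\xbf\|_{2,1}^{(\omega)} + 2\sigma_s(\xbf)_{2,1}^{(\omega)} \bigr) + D_{\rho,\tau}\,\sqrt{s}\,\|A(\widehat{\xbf}-\xbf)\|_2 ;
\]
inserting the two facts above yields~\eqref{eq:l21estimates}. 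Taking $p = q = 2$ instead gives $s^{1-1/p} = \sqrt{s}$ and $s^{1/q - 1/p} = 1$, and since $\|\cdot\|_{2,2}^{(\omega)} = \bigl(\sum_b \omega_b^{0}\|\cdot[b]\|_2^2\bigr)^{1/2} = \|\cdot\|_2$ by the definition of the weighted block norm, the same substitution produces~\eqref{eq:l22estimates} with the stated constants $C_\rho$, $D_{\rho,\tau}$.

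For the statement as phrased I would simply establish that the two bounds hold for \emph{any} minimizer $\widehat{\xbf}$ of~\eqref{eq:blockBPDN}, since uniqueness plays no role in the estimates themselves (it is the separate, standard consequence of the null space property obtained when one additionally restricts to exactly weighted-block-sparse $\xbf$ with $\eta = 0$, via Proposition~\ref{prop:lemmaL1NSP}). There is essentially no serious obstacle in this argument: the only points requiring care are the bookkeeping of the powers of $s$ when matching the exponents $1 - 1/p$ and $1/q - 1/p$ in~\eqref{eq:l2pestimates} to the claimed $\sqrt{s}$ and $1/\sqrt{s}$ factors, and the trivial verification that $\|\cdot\|_{2,2}^{(\omega)}$ coincides with the Euclidean norm $\|\cdot\|_2$ irrespective of $\omega$, which is what identifies the left-hand side of~\eqref{eq:l22estimates}.
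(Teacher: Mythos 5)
Your proposal is correct and follows essentially the same route as the paper's own proof: apply Corollary~\ref{cor:l2pestimates} with $\zbf = \widehat{\xbf}$, use feasibility of $\xbf$ to discard the nonpositive term $\|\widehat{\xbf}\|_{2,1}^{(\omega)} - \|\xbf\|_{2,1}^{(\omega)}$, bound $\|A(\widehat{\xbf}-\xbf)\|_2 \leq 2\eta$ by the triangle inequality, and specialize to $p=1$ and $p=2$. Your additional observations --- that the exponent bookkeeping with $q=2$ yields exactly the $\sqrt{s}$ and $1/\sqrt{s}$ factors, that $\|\cdot\|_{2,2}^{(\omega)} = \|\cdot\|_2$, and that the bounds hold for any minimizer so uniqueness is not needed --- are all accurate.
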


\begin{proof}
The proof is obtained by letting $\zbf := \hat{\xbf}$ be the optimal solution to Problem~\eqref{eq:blockBPDN}. 
It then follows by optimality of the solution that $\|\hat{\xbf}\|_{2,1}^{(\omega)} - \|\xbf\|_{2,1}^{(\omega)} \leq 0$. 
Moreover, we have $\|A(\hat{\xbf} - \xbf)\|_2 \leq 2\eta$. 
Hence, by injecting these two estimates, Equation~\eqref{eq:l2pestimates} becomes 
\begin{equation*}
\|\hat{\xbf}-\xbf\|_{2,p} \leq \frac{2C_\rho}{s^{1-1/p}}\sigma_s(\xbf)_{2,1}^{(\omega)} + \frac{2\eta D_{\rho,\tau}}{s^{1/2-1/p}}.
\end{equation*}
Finally, letting $p =1$ and then $2$ concludes the proof. 
\end{proof}

\begin{adef}
Let $A \in \Rbb^{m \times N}$ be a sensing matrix, $\Bcal$ a block structure, and $\omega = (\omega_b)_{b \in \Bcal}$, with $\omega_i \geq 1$, be a sequence of weights.
$A$ is said to have the \emph{weighted block restricted isometry property} of order $s \geq \|\omega\|_\infty$ with constant $\delta \in (0,1)$ (WBRIP($s,\delta$))  with respect to the block structure $\Bcal$ and weights $\omega$ if 
\begin{equation}
\label{eq:blockWRIP}
(1-\delta)\|\xbf\|_2^2 \leq \|A\xbf\|_2^2 \leq (1+\delta)\|\xbf\|_2^2
\end{equation}
holds for every $\xbf$ such that $\|\xbf\|_0^{(\omega)} \leq s$.

The smallest such constant $\delta$ is called the \emph{weighted block restricted isometry constant} and is denoted $\delta_{s}$. 
\end{adef}

\jlb{One should keep in mind that this definition depends on the block-structure considered.} This condition is sufficient for a stable and robust recovery of weighted block sparse signals, as stated in the following theorem.
\begin{atheorem}
\label{thm:recoveryRIP}
Let $A \in \Rbb^{m \times N}$ and $\omega = (\omega_1, \cdots, \omega_B)$ with $\omega_i \geq 1$ be given for a block structure $\Bcal = (\Bcal_1, \cdots, \Bcal_B)$. 
Let $s \geq 2\|\omega\|_\infty^2$ and $\delta_{2s}$ be such that 
\begin{equation}
\label{eq:condDelta2s}
\delta_{2s} < \frac{1}{2\sqrt{2}+1}.
\end{equation}
Assume that the matrix $A$ fulfills the WBRIP($2s,\delta_{2s}$). 
For any $\xbf \in \Rbb^{N}$, let $\ybf = A\xbf + \ebf$ where $\ebf \in \Rbb^m$ is some additive noise such that $\|\ebf\|_2 \leq \eta$.
Then, $\xbf$ can be approximated via the weighted block $\ell^1$ minimization 
\begin{equation*}
\hat{\xbf} := \left\{
\begin{array}{l}
    \displaystyle \argmin_{\zbf \in \Rbb^N} \|\zbf\|_{2,1}^{(\omega)} \\
     \text{ s. t.  } \|A\zbf - \ybf\|_2 \leq \eta
\end{array} \right.
\end{equation*}
with the following error bounds
\begin{align*}
\|\xbf - \hat{\xbf}\|_{2,1}^{(\omega)} &\leq c_\delta \sigma_s(\xbf)_{2,1}^{(\omega)} + d_\delta \sqrt{s} \eta \\
\|\xbf - \hat{\xbf}\|_{2} &\leq \frac{c_\delta}{\sqrt{s}}\sigma_s(\xbf)_{2,1}^{(\omega)} + d_\delta \eta, 
\end{align*}
where the constants $c_\delta := \frac{2(1+\delta_{2s}(2\sqrt{2}-3))^2}{(1-\delta_{2s})(1-\delta_{2s}(2\sqrt{2}+1))}$ and $d_\delta := \frac{2(3+\delta_{2s}(2\sqrt{2}-3))\sqrt{1+\delta_{2s}}}{(1-\delta_{2s})(1-\delta_{2s}(2\sqrt{2}+1))}$ depend only on the RIP constant $\delta_{2s}$.
\end{atheorem}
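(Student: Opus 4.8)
The plan is to deduce from the WBRIP hypothesis that $A$ satisfies the $\ell^2_\omega$-BRNSP of order $s$ with the explicit constants $\rho = \frac{2\sqrt{2}\,\delta_{2s}}{1-\delta_{2s}}$ and $\tau = \frac{\sqrt{1+\delta_{2s}}}{1-\delta_{2s}}$, and then to invoke Corollary~\ref{cor:nspBPDN}. Condition~\eqref{eq:condDelta2s} is precisely what makes $\rho \in (0,1)$, so the corollary applies (its hypothesis $s \ge \|\omega\|_\infty^2$ is implied by $s \ge 2\|\omega\|_\infty^2$), and the constants $c_\delta$ and $d_\delta$ in the statement are read off from $2C_\rho$ and $2D_{\rho,\tau}$ after substituting these values into $C_\rho = \frac{(1+\rho)^2}{1-\rho}$ and $D_{\rho,\tau} = \frac{3+\rho}{1-\rho}\tau$ and clearing the common denominator $1-\delta_{2s}$; the two error bounds are then exactly those of Corollary~\ref{cor:nspBPDN} for $p=1$ and $p=2$, using that $\|\cdot\|_{2,2}^{(\omega)} = \|\cdot\|_2$.

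So fix $\xbf \in \Rbb^N$ and a block support set $S$ with $\omega(S) \le s$; I must bound $\|\xbf[S]\|_2$ by $\frac{\rho}{\sqrt{s}}\|\xbf[S^c]\|_{2,1}^{(\omega)} + \tau\|A\xbf\|_2$. First reorder the blocks of $S^c$ so that $\|\xbf[b]\|_2/\omega_b$ is non-increasing, and split $S^c$ into consecutive groups $S_1, S_2, \dots$ greedily, each $S_j$ taken as large as possible subject to $\omega(S_j) \le s$. Since $s \ge 2\|\omega\|_\infty^2$ and each block contributes at most $\|\omega\|_\infty^2 \le s/2$ to the weighted cardinality, every completed group satisfies $s/2 < \omega(S_j) \le s$. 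In particular $\omega(S \cup S_1) \le 2s$, so the WBRIP of order $2s$ applies to $\xbf[S\cup S_1]$; moreover, for any two vectors supported on disjoint block supports of weighted cardinality at most $s$ each, the WBRIP of order $2s$ gives, via the usual polarization identity, $|\langle A\ubf, A\vbf\rangle| \le \delta_{2s}\|\ubf\|_2\|\vbf\|_2$. Applying this to $\xbf[S]$ and to $\xbf[S_1]$ separately against $\xbf[S_j]$ ($j\ge2$) and combining by Cauchy--Schwarz yields $|\langle A\xbf[S\cup S_1], A\xbf[S_j]\rangle| \le \sqrt{2}\,\delta_{2s}\|\xbf[S\cup S_1]\|_2\,\|\xbf[S_j]\|_2$, which keeps everything at order $2s$.

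Next comes the weighted shifting estimate. For $b \in S_{j+1}$ the chosen ordering gives $\|\xbf[b]\|_2/\omega_b \le \min_{b' \in S_j}\|\xbf[b']\|_2/\omega_{b'} \le \|\xbf[S_j]\|_{2,1}^{(\omega)}/\omega(S_j)$, hence $\|\xbf[S_{j+1}]\|_2^2 \le \frac{\omega(S_{j+1})}{\omega(S_j)^2}\big(\|\xbf[S_j]\|_{2,1}^{(\omega)}\big)^2 \le \frac{4}{s}\big(\|\xbf[S_j]\|_{2,1}^{(\omega)}\big)^2$, and summing over $j\ge1$ gives $\sum_{j\ge2}\|\xbf[S_j]\|_2 \le \frac{2}{\sqrt{s}}\|\xbf[S^c]\|_{2,1}^{(\omega)}$. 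Writing $A\xbf = A\xbf[S\cup S_1] + \sum_{j\ge2}A\xbf[S_j]$ and expanding $\|A\xbf[S\cup S_1]\|_2^2 = \langle A\xbf[S\cup S_1], A\xbf\rangle - \sum_{j\ge2}\langle A\xbf[S\cup S_1], A\xbf[S_j]\rangle$, bounding the first scalar product by Cauchy--Schwarz and the upper WBRIP bound and the others as above, using the lower WBRIP bound on the left-hand side, and dividing by $\|\xbf[S\cup S_1]\|_2$, gives
\[
\|\xbf[S]\|_2 \le \|\xbf[S\cup S_1]\|_2 \le \frac{2\sqrt{2}\,\delta_{2s}}{(1-\delta_{2s})\sqrt{s}}\|\xbf[S^c]\|_{2,1}^{(\omega)} + \frac{\sqrt{1+\delta_{2s}}}{1-\delta_{2s}}\|A\xbf\|_2,
\]
which is exactly the $\ell^2_\omega$-BRNSP of order $s$ with the announced $\rho$ and $\tau$. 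Corollary~\ref{cor:nspBPDN} then concludes.

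The part that needs care is the weighted greedy partition and the attendant factor-$2$ loss: because the weights need not divide $s$ evenly one can only guarantee $\omega(S_j) \in (s/2, s]$, which is exactly where the hypothesis $s \ge 2\|\omega\|_\infty^2$ enters and which produces the $2\sqrt{2}$ in $\rho$ — and hence the threshold $1/(2\sqrt{2}+1)$ in~\eqref{eq:condDelta2s} rather than the classical $\sqrt{2}-1$. One must also route the cross terms through the order-$2s$ (not order-$3s$) RIP by splitting $S\cup S_1$ into $S$ and $S_1$ before applying the polarization bound. The remaining manipulations are identical to the unweighted scalar case treated in~\cite{Foucart13book}.
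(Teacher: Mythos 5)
Your proposal is correct and follows essentially the same route as the paper: the same greedy weighted partition of $S^c$ with $\omega(S_j)\in(s/2,s]$ forced by $s\ge 2\|\omega\|_\infty^2$, the same order-$2s$ cross-term bound applied separately to $\xbf[S]$ and $\xbf[S_1]$ and recombined into the factor $\sqrt{2}\,\delta_{2s}\|\xbf[S\cup S_1]\|_2$, the same shifting estimate giving $2/\sqrt{s}$, and the same reduction to Corollary~\ref{cor:nspBPDN} via the $\ell^2_\omega$-BRNSP with $\rho=\frac{2\sqrt{2}\delta_{2s}}{1-\delta_{2s}}$ and $\tau=\frac{\sqrt{1+\delta_{2s}}}{1-\delta_{2s}}$. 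One incidental observation from your \emph{read off} step: substituting this $\rho$ into $2C_\rho=\frac{2(1+\rho)^2}{1-\rho}$ gives the numerator $2\bigl(1+\delta_{2s}(2\sqrt{2}-1)\bigr)^2$ rather than the $2\bigl(1+\delta_{2s}(2\sqrt{2}-3)\bigr)^2$ printed in the statement of $c_\delta$, which appears to be a typo in the paper ($d_\delta$, by contrast, matches exactly).
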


This theorem is a consequence of the following result
\begin{atheorem}
\label{thm:RIP-NSP}
Let $A \in \Rbb^{m \times N}$ and $\omega = (\omega_1, \cdots, \omega_B)$ with $\omega_i \geq 1$ be given for a block structure $\Bcal = (\Bcal_1, \cdots, \Bcal_B)$. 
Let $s \geq 2\|\omega\|_\infty^2$ and $\delta_{2s}$ satisfies the condition~\eqref{eq:condDelta2s}.
Assume that the matrix $A$ fulfills the WBRIP($2s,\delta_{2s}$). 
Then $A$ fulfills the $\ell^2_\omega-$BRNSP with parameters $\tau = \frac{\sqrt{1+\delta_{2s}}}{1-\delta_{2s}}$ and $\rho = \frac{2\sqrt{2}\delta_{2s}}{1-\delta_{2s}}$.
\end{atheorem}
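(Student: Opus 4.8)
The plan is to run the by-now classical argument that derives a robust null space property from a restricted isometry property (as in \cite[Theorem~6.13]{Foucart13book}), keeping careful track of how the block partition and the weights enter the constants. One simplification comes first: at $p=2$ the weighted block norm on the left of the $\ell^2_\omega$-BRNSP degenerates, since $\|\xbf[S]\|_{2,2}^{(\omega)}=\big(\sum_{b\in S}\omega_b^{2-2}\|\xbf[b]\|_2^2\big)^{1/2}=\|\xbf[S]\|_2$, and $s^{1-1/p}=\sqrt s$. So it suffices to prove, for every $\xbf\in\Rbb^N$ and every block support $S$ with $\omega(S)\le s$,
\[
\|\xbf[S]\|_2\ \le\ \frac{2\sqrt2\,\delta_{2s}}{(1-\delta_{2s})\sqrt s}\,\|\xbf[S^c]\|_{2,1}^{(\omega)}\ +\ \frac{\sqrt{1+\delta_{2s}}}{1-\delta_{2s}}\,\|A\xbf\|_2 ,
\]
which is exactly the order-$s$ $\ell^2_\omega$-BRNSP with the advertised $\rho$ and $\tau$.

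For this I would introduce a weighted, block-wise dyadic decomposition of $S^c$: order the blocks $b\in S^c$ by non-increasing value of $\|\xbf[b]\|_2/\omega_b$, and cut the ordered list greedily into consecutive groups $S_1,S_2,\dots$, taking each $S_j$ as large as possible subject to $\omega(S_j)\le s$; set $S_0:=S$. Three bookkeeping facts drive everything. First, $\omega(S_0\cup S_1)\le 2s$, so the WBRIP$(2s,\delta_{2s})$ applies to $\xbf[S_0\cup S_1]$ and, together with the standard consequence $|\langle A\xbf[T],A\xbf[T']\rangle|\le\delta_{2s}\|\xbf[T]\|_2\|\xbf[T']\|_2$ for blocks with disjoint supports and $\omega(T)+\omega(T')\le 2s$, controls all cross terms $\langle A\xbf[S_i],A\xbf[S_j]\rangle$ with $i\in\{0,1\}$ and $j\ge 2$. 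Second, pairwise disjointness gives $\|\xbf[S_0]\|_2^2+\|\xbf[S_1]\|_2^2=\|\xbf[S_0\cup S_1]\|_2^2$. Third — and this is where $s\ge 2\|\omega\|_\infty^2$ is used — by maximality of $S_{j-1}$ every group admitting a successor satisfies $\omega(S_{j-1})>s-\|\omega\|_\infty^2\ge s/2$.

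The WBRIP step is then routine: expanding $\|A\xbf[S_0\cup S_1]\|_2^2=\langle A\xbf[S_0\cup S_1],A\xbf\rangle-\sum_{j\ge 2}\langle A\xbf[S_0\cup S_1],A\xbf[S_j]\rangle$, bounding the first term by $\sqrt{1+\delta_{2s}}\,\|\xbf[S_0\cup S_1]\|_2\|A\xbf\|_2$ and each cross term by $\delta_{2s}(\|\xbf[S_0]\|_2+\|\xbf[S_1]\|_2)\|\xbf[S_j]\|_2\le\sqrt2\,\delta_{2s}\|\xbf[S_0\cup S_1]\|_2\|\xbf[S_j]\|_2$, then using the lower WBRIP bound and cancelling one power of $\|\xbf[S_0\cup S_1]\|_2$, gives
\[
\|\xbf[S]\|_2\ \le\ \|\xbf[S_0\cup S_1]\|_2\ \le\ \frac{\sqrt{1+\delta_{2s}}}{1-\delta_{2s}}\|A\xbf\|_2\ +\ \frac{\sqrt2\,\delta_{2s}}{1-\delta_{2s}}\sum_{j\ge 2}\|\xbf[S_j]\|_2 .
\]
It then remains to establish the weighted block Stechkin estimate $\sum_{j\ge 2}\|\xbf[S_j]\|_2\le\tfrac{2}{\sqrt s}\|\xbf[S^c]\|_{2,1}^{(\omega)}$. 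For $j\ge 2$ put $\mu_{j-1}:=\min_{b'\in S_{j-1}}\|\xbf[b']\|_2/\omega_{b'}$; since the ordering puts all of $S_{j-1}$ before all of $S_j$, every $b\in S_j$ has $\|\xbf[b]\|_2/\omega_b\le\mu_{j-1}$, so $\|\xbf[S_j]\|_2^2=\sum_{b\in S_j}\omega_b^2(\|\xbf[b]\|_2/\omega_b)^2\le\mu_{j-1}^2\,\omega(S_j)$. Bounding $\mu_{j-1}$ by the $\omega^2$-weighted average of $\|\xbf[b']\|_2/\omega_{b'}$ over $S_{j-1}$ gives $\mu_{j-1}\le\|\xbf[S_{j-1}]\|_{2,1}^{(\omega)}/\omega(S_{j-1})$, and then $\omega(S_j)\le s$ together with $\omega(S_{j-1})>s/2$ yields $\|\xbf[S_j]\|_2\le\tfrac{2}{\sqrt s}\|\xbf[S_{j-1}]\|_{2,1}^{(\omega)}$; summing over $j\ge 2$ telescopes the right-hand side to $\|\xbf[S^c]\|_{2,1}^{(\omega)}$. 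Substituting back produces exactly $\rho=\frac{2\sqrt2\,\delta_{2s}}{1-\delta_{2s}}$ and $\tau=\frac{\sqrt{1+\delta_{2s}}}{1-\delta_{2s}}$, and $\rho<1$ is precisely the hypothesis $\delta_{2s}<\frac{1}{2\sqrt2+1}$. Degenerate cases ($S^c=\emptyset$, $S_1=\emptyset$, or $\xbf[S_0\cup S_1]=0$) are immediate.

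The delicate point is this last Stechkin-type estimate. Unlike in the unweighted situation, one cannot split $S^c$ into pieces of equal weighted cardinality $s$; the greedy pieces only satisfy $\omega(S_j)\in(s/2,s]$, and it is this imbalance — entering through the ratio $\omega(S_j)/\omega(S_{j-1})^2<4/s$ — that inflates the $\sqrt2$ of the classical estimate to $2\sqrt2$ and forces the more restrictive isometry threshold $\delta_{2s}<1/(2\sqrt2+1)$. A secondary subtlety is the choice of the ordering key on the blocks: it must simultaneously make the greedy lower bound $\omega(S_{j-1})>s/2$ available and make the weighted-average bound on $\mu_{j-1}$ valid, and sorting by $\|\xbf[b]\|_2/\omega_b$ accomplishes both.
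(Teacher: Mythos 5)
Your proposal is correct and follows essentially the same route as the paper's proof: the same ordering of the blocks of $S^c$ by $\|\xbf[b]\|_2/\omega_b$, the same greedy partition into pieces of weighted cardinality in $(s-\|\omega\|_\infty^2,\,s]$, the same cross-term estimate via the WBRIP on $S\cup S_1$, and the same weighted averaging argument giving $\|\xbf[S_j]\|_2\le \tfrac{2}{\sqrt s}\|\xbf[S_{j-1}]\|_{2,1}^{(\omega)}$ from $s\ge 2\|\omega\|_\infty^2$. Your closing remarks correctly identify where the extra factor of $2$ (hence the threshold $\delta_{2s}<1/(2\sqrt2+1)$) originates.
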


\begin{proof}
The proof follows the classical ideas from~\cite{Eldar2009group,Rauhut13wCS} with some changes and (local slight) improvements. 

Let $\vbf \in \Rbb^N$ and $S \in \{1,\cdots, B\}$ be such that $\omega(S) \leq s$ and partition $S^c$ into $l$ components $S^c = S_1 \cup S_2 \cup \cdots S_l$ such that (up to the last component $S_l$) all the $\bsupp$ have (almost) the same weighted cardinality: $s - \|\omega\|_\infty^2 \leq \omega(S_i) \leq s$. 
The subsets are ordered according to the non increasing \jlb{rearrangement of $\|\vbf[\Bcal_i]\|_2\omega_i^{-1}$; i.e. $\|\vbf[\Bcal_j]\|_2\omega_j^{-1} \geq \|\vbf[\Bcal_k]\|_2\omega_k^{-1}$} for all $j \in S_i$ and $k \in S_{i+1}$, $1 \leq i \leq l-1$. 
It holds
\begin{align}
\|A(\vbf[S]+\vbf[S_1])\|_2^2 &= \left| \left\langle A(\vbf[S]+\vbf[S_1]), A\vbf-\sum_{i=2}^lA\vbf[S_i] \right\rangle \right| \nonumber \\ 
 &= \left|\left \langle A(\vbf[S]+\vbf[S_1]), A\vbf \right \rangle - \sum_{i = 2}^l\left\langle A(\vbf[S]+\vbf[S_1]), A\vbf[S_i]  \right\rangle\right| \nonumber \\ 
 &\leq \sqrt{1+\delta_{2s}}\left\|\vbf[S\cup S_1]\right\|_2\left\|A\vbf\right\|_2 + \sum_{i=2}^l\left( \left|\langle A\vbf[S], A\vbf[S_i] \rangle \right|  + \left| \langle A\vbf[S_1], A\vbf[S_i] \rangle \right|  \right) \nonumber \\
 &\stackrel{\text{Lemma } \ref{lemma:wbripST}}{\leq} \sqrt{1+\delta_{2s}}\left\|\vbf[S\cup S_1]\right\|_2\left\|A\vbf\right\|_2 + \sum_{i=2}^l \delta_{2s}(\|\vbf[S]\|_2+\|\vbf[S_1]\|_2)\|\vbf[S_i]\|_2 \nonumber \\ 
 \label{eq:rhsRIPtoNSP} &\leq \sqrt{1+\delta_{2s}}\left\|\vbf[S\cup S_1]\right\|_2\left\|A\vbf\right\|_2 + \sqrt{2}\delta_{2s}\|\vbf[S\cup S_1]\|_2 \sum_{i=2}^l\|\vbf[S_i]\|_2.
\end{align}
\jlb{The proof of Lemma~\ref{lemma:wbripST}
can be found in the Appendix~\ref{a:proofOfLemma}.}

On the other hand, using the left hand side of the WBRIP~\eqref{eq:blockWRIP}, we have
\begin{equation*}
\|\vbf[S\cup S_1]\|_2^2 \leq \frac{1}{1-\delta_{2s}}\|A( \vbf[S] + \vbf[S_1])\|_2^2. 
\end{equation*}
Plugging back in~\eqref{eq:rhsRIPtoNSP} and simplifying yields
\begin{equation}
\label{eq:RIPtoNSPalmostThere}
\|\vbf[S \cup S_1]\|_2 \leq \frac{\sqrt{2}\delta_{2s}}{1-\delta_{2s}}\sum_{i=2}^l\|\vbf[S_i]\|_2 + \frac{\sqrt{1+\delta_{2s}}}{1-\delta_{2s}}\|A\vbf\|_2
\end{equation}
Following the proof of Theorem 4.5 from~\cite{Rauhut13wCS}, we can estimate $\sum_{i \geq 2} \|\vbf[S_i]\|_2$. 
For every index $k \in S_i$, $2 \leq i \leq l$, we define $\lambda_k := (\sum_{j \in S_i}\omega_j^2)^{-1}\omega_k^2 \leq \left( s- \|\omega\|_\infty^2 \right)^{-1}\omega_k^2$. 
Noticing that $\sum_{k \in S_i}\lambda_k = 1$, it follows that\jlb{, for any $\ell \in S_i$} $\|\vbf[\Bcal_\ell]\|_2\omega_\ell^{-1} \leq \sum_{k \in S_{i-1}}\lambda_k \|\vbf[\Bcal_k]\|_2\omega_k^{-1} \leq \left( s - \|\omega\|_\infty^2 \right)^{-1}\sum_{k \in S_{i-1}}\|\vbf[\Bcal_k]\|_2\omega_k$. From this we obtain
\begin{align*}
\|\vbf[S_i]\|_2^2 &= \sum_{\ell \in S_i}\|\vbf[\Bcal_\ell]\|_2^2 = \sum_{\ell \in S_i}(\|\vbf[\Bcal_\ell]\|_2 \omega_\ell^{-1})^2\omega_\ell^2 \jlb{\leq \sum_{\ell \in S_i}\left(\left( s - \|\omega\|_\infty^2 \right)^{-1}\sum_{k \in S_{i-1}}\|\vbf[\Bcal_k]\|_2\omega_k\right)^2\omega_\ell^2} \\ 
 &\leq 
\frac{1}{\left(s-\|\omega\|_\infty^2\right)^2}\left( \sum_{\ell \in S_i}\omega_\ell^2\right) \left( \sum_{k \in S_{i-1}} \omega_k \|\vbf[\Bcal_k]\|_2\right)^2 
\end{align*}
\jlb{Taking the square roots and remembering that $\left( \sum_{\ell \in S_i}\omega_\ell^2\right) \leq s$, we arrive at}
$$
\|\vbf[S_i]\|_2 \leq \frac{\sqrt{s}}{s-\|\omega\|_\infty^ 2}\|\vbf[S_{i-1}]\|_{2,1}^{(\omega)} \leq \frac{2}{\sqrt{s}}\|\vbf[S_{i-1}]\|_{2,1}^{(\omega)}.
$$
Finally, plugging back in Eq.~\eqref{eq:RIPtoNSPalmostThere} gives
\begin{align*}
\|\vbf[S]\|_2 &\leq \|\vbf[S] + \vbf[S_1]\|_2 \leq \frac{\sqrt{2}\delta_{2s}}{1-\delta_{2s}}\sum_{i=1}^l\frac{2}{\sqrt{s}}\|\vbf[S_i]\|_{2,1}^{(\omega)} + \frac{\sqrt{1+\delta_{2s}}}{1-\delta_{2s}}\|A\vbf\|_2 \\ 
 &\leq \frac{2\sqrt{2}\delta_{2s}}{\sqrt{s}(1-\delta_{2s})}\|\vbf[S^c]\|_{2,1}^{(\omega)} + \frac{\sqrt{1+\delta_{2s}}}{1-\delta_{2s}}\|A\vbf\|_2,
\end{align*}
which is the $\ell_\omega^2$-BRNSP of order $s$ and constants $\rho = \frac{2\sqrt{2}\delta_{2s}}{1-\delta_{2s}} \in (0,1)$ as soon as Eq.~\eqref{eq:condDelta2s} is fulfilled and $\tau = \frac{\sqrt{1+\delta_{2s}}}{1-\delta_{2s}}$.
\end{proof}

\begin{proof}[Proof of Theorem \ref{thm:recoveryRIP}]
The results follow Corollary~\ref{cor:nspBPDN} with the constants $\rho$ and $\tau$ obtained in Theorem~\ref{thm:RIP-NSP}. 
Indeed, assuming the the matrix $S$ satisfies WBRIP($2s$, $\delta_{2s}$), it follows that it satisfies the $\ell_{\omega}^2$-BRNSP with constants $\rho, \tau$ given in Theorem~\ref{thm:RIP-NSP}. 
Plugging back in Equations~\eqref{eq:l21estimates} and ~\eqref{eq:l22estimates} gives the desired results. 
\end{proof}

\section{Block sparse recovery in practice}
\label{sec:matrices}
We review here some constructions of matrices allowing to use weighted block sparse recovery.

\subsection{Subgaussian random matrices}
Our first example is that of a random matrix. 
Following ideas from~\cite{Foucart13book} and adapting the block sparse recovery from~\cite{Eldar2009group}, we show that a matrix with rows taken as isotropic sub-Gaussian random variables fulfill the a certain WBRIP.

\begin{atheorem}
Let $\tilde{A}$ be an $m \times N$ matrix obtained by stacking $m$ \jlb{isotropic}
independent sub-Gaussian random vectors in $\Rbb^N$ and let $k$ denotes their sub-Gaussian constant. 
\jlb{Define $A = \tilde{A}/\sqrt{m}$.}
Let $\Bcal$ be a block structure with associated weights $(\omega_i)_{i \in \Bcal}, \omega_i \geq 1$.
Let $s \geq \|\omega\|_\infty^2$ and $\delta \in (0,1)$. 
Then $A$ satisfies the $WBRIP(s,\delta)$ with probability $1-\varepsilon$, provided 
\begin{equation}
\label{eq:m4RIP}
m \geq Ck^4s\delta^{-2}\ln\left(\frac{e \kappa}{s}\right) + Ck^4\delta^{-2}\ln\left( \frac{2}{\varepsilon} \right).
\end{equation}
Here $\kappa$ denotes the maximum number of blocks of the smallest size in the block structure. 
\end{atheorem}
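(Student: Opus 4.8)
The plan is to follow the standard route for establishing a restricted isometry property for sub-Gaussian matrices, as in \cite[Chapter~9]{Foucart13book}, combined with the block bookkeeping of \cite{Eldar2009group} and the weight-aware support counting of \cite{Rauhut13wCS}. First I would recast the WBRIP as a union-of-subspaces statement: for a block-support $S \subseteq \{1,\dots,B\}$ with $\omega(S) \le s$, let $E_S := \{\xbf \in \Rbb^N : \bsupp(\xbf) \subseteq S\}$, a coordinate subspace of dimension $d_S := \sum_{b \in S} d_b$. Since $\{\xbf : \|\xbf\|_0^{(\omega)} \le s\} = \bigcup_{\omega(S)\le s} E_S$, the matrix $A$ satisfies WBRIP$(s,\delta)$ exactly when $\bigl|\,\|A\xbf\|_2^2 - \|\xbf\|_2^2\,\bigr| \le \delta\|\xbf\|_2^2$ holds simultaneously on every such $E_S$; it therefore suffices to bound, by a union bound over $S$, the probability that $A$ fails to be a $\delta$-near-isometry on a fixed $E_S$.

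Second, for a fixed $S$ I would run the classical net argument. Fix an absolute constant $\varepsilon \in (0,1)$ and let $\Ncal_S$ be an $\varepsilon$-net of the unit sphere of $E_S$ with $|\Ncal_S| \le (1 + 2/\varepsilon)^{d_S}$. For a fixed unit vector $\xbf$, the concentration inequality for matrices with independent, isotropic, sub-Gaussian rows (e.g.\ \cite[Lemma~9.8]{Foucart13book}, with sub-Gaussian constant $k$) gives $\Pr\bigl(\bigl|\,\|A\xbf\|_2^2 - 1\,\bigr| \ge t\bigr) \le 2\exp(-c\,m\,t^2/k^4)$ for $t$ below an absolute threshold, the quadratic-form nature of $\|A\xbf\|_2^2$ being responsible for the power $k^4$. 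A union bound over $\Ncal_S$ together with the usual passage from the net to the full sphere upgrades near-isometry on $\Ncal_S$ to near-isometry on all of $E_S$ at the cost of absolute constants, so the failure probability on $E_S$ is at most $2(1+2/\varepsilon)^{d_S}\exp(-c'\,m\,\delta^2/k^4)$.

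Third, I would union bound over all admissible $S$. Because every $\omega_b \ge 1$, any $S$ with $\omega(S) \le s$ contains at most $s$ blocks, and since the actual number of blocks is at most $\kappa$ (every block is at least as large as the smallest one, so $B \le N/d_{\min} \le \kappa$), the number of such block-supports is at most $\sum_{j \le s}\binom{\kappa}{j} \le (e\kappa/s)^s$; moreover $d_S$ is linear in $s$ once the block sizes are weighed against the weights. Hence $\bigl|\bigcup_S \Ncal_S\bigr| \le \exp\!\bigl(Cs\ln(e\kappa/s)\bigr)$, the overall failure probability is at most $2\exp\!\bigl(Cs\ln(e\kappa/s) - c'm\delta^2/k^4\bigr)$, and demanding this be $\le \varepsilon$, i.e.\ $c'm\delta^2/k^4 \ge Cs\ln(e\kappa/s) + \ln(2/\varepsilon)$, is precisely the sampling condition~\eqref{eq:m4RIP}.

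I expect the last step to be the main obstacle. Getting the logarithmic factor to collapse to exactly $\ln(e\kappa/s)$ requires (i) using the weighted cardinality $\omega(S)$ — rather than the plain number of blocks — as the sparsity budget when enumerating supports, exactly the subtlety isolated in \cite{Rauhut13wCS} in the scalar case, and (ii) ensuring that the per-subspace dimension $d_S = \sum_{b \in S} d_b$ gets absorbed into the $s$-term instead of producing a separate additive contribution; the block formalism of \cite{Eldar2009group} is what makes this accounting go through, and the interplay between block sizes and the normalization $\omega_b \ge 1$ has to be tracked carefully.
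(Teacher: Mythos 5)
Your proposal follows essentially the same route as the paper: a union bound over the at most $(e\kappa/s)^s$ admissible weighted block supports (using $\omega_b \ge 1$ to bound $|S|\le s$ and $B\le\kappa$), combined with a sub-Gaussian near-isometry estimate on each fixed block subspace, then solving for $m$. The only difference is that you re-derive the per-support estimate from scratch via an $\varepsilon$-net and pointwise concentration, whereas the paper invokes the two-sided singular-value concentration bound for matrices with independent isotropic sub-Gaussian rows as a black box — the same device one level of abstraction apart — and the subspace-dimension contribution $d_S$ that you rightly flag as the delicate point appears in the paper's argument too (as the term $ck^2\sqrt{d_S/m}$), where it is likewise absorbed into the constant rather than tracked explicitly.
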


\begin{proof}
Let us first introduce the set of admissible block supports (for a given weighted sparsity $s$ and sequence of weights $(\omega_i)_{i \in \Bcal}$)
\begin{equation*}
\Ical := \{I \subseteq \Bcal: \omega(I) = \sum_{i \in I} \omega_i^2 \leq s\}.
\end{equation*}
Since $\omega_i \geq 1$ for all $i$, it follows that $|I| \leq s$ for all $I \in \Ical$. A counting argument yields 
\begin{equation}
\label{eq_nbSupports}
|\Ical| \leq { \lceil N/d_\text{min} \rceil \choose s } = { \kappa \choose s} , 
\end{equation}
where $d_\text{min} := \min_{b \in \Bcal} d_b$ and $\kappa$ denotes the maximum number of blocks of the smallest possible size (note that here $\kappa$ is quite an overestimation of $B$, the number of blocks in the block structure.). 

The goal of the proof is to bound the minimum and maximum singular values of the restrictions of $A$ to block support $S \in \Ical$. 
To this end, define 
\begin{align*}
\sigma_m &:= \min_{S \in \Ical} \sigma_{\text{min}}(A_S), \\
\sigma_M &:= \max_{S \in \Ical} \sigma_{\text{max}}(A_S),
\end{align*}
where we let $A_S$ denote the matrix defined by extracting the columns from $A$ supported on the blocks of $S$. 

Following the definition of the WBRIP (see Eq.~\eqref{eq:blockWRIP}), we have that 
\begin{equation}
\label{eq:RIPsingVal}
1-\delta \leq \sigma_m \leq \sigma_M \leq 1 + \delta, 
\end{equation}
for a certain $\delta \in (0,1)$. 

We now recall a result from~\cite{HDP18} about the minimum and maximum singular values of an isotropic random matrix
\begin{atheorem}
\label{thm:HDP}
Let $A$ be an $m \times n$ matrix whose rows $A_i$ are independent, mean-zero, sub-Gaussian isotropic random vectors in $\Rbb^n$. 
Then for any $t \geq 0$ we have 
\begin{align*}
\Pbb(\sigma_\text{max}(A/\sqrt{m}) > 1+ ck^2\sqrt{\frac{n}{m}} + t) &\leq e^{-\frac{mt^2}{c^2k^4} } \\
\Pbb(\sigma_\text{max}(A/\sqrt{m}) < 1 - ck^2\sqrt{\frac{n}{m}} - t) &\leq e^{-\frac{mt^2}{c^2k^4} }.
\end{align*}
where $k$ denotes the $\psi_2$ norm of the random vectors \footnote{The $\psi_2$ norm is related to the sub-Gaussian constant, in other words, how close to a Gaussian the random variable is. The interested reader may read more in the book~\cite{HDP18}. }
\end{atheorem}
Hence, Eq.~\eqref{eq:RIPsingVal} is violated, if it is violated for at least one of the support in $\Ical$. 
A union bound combined with Theorem~\ref{thm:HDP} gives the upper bound. 
To this hand, notice that by taking the number of measurements large enough (by increasing the constant in Eq.~\eqref{eq:m4RIP}), we can make \jlb{$ck^2\sqrt{\frac{n}{m}} \leq \delta/2$} and letting $t = \delta / 2$ yields
\begin{align*}
\Pbb(\sigma_M > 1+\delta) &\leq \sum_{I \in \Ical} \Pbb(\sigma_{\text{max}}(A_I) > 1+ \delta ) \leq {\kappa \choose s}e^{\frac{-m\delta^2}{c'k^4}}.
\end{align*}
Together with the well known bound for binomial coefficients
$$
{\kappa \choose s} \leq \left( \frac{e\kappa}{s} \right)^s,
$$
we arrive at
\begin{equation*}
\Pbb(\sigma_M > 1 + \delta) \leq e^{s\ln(e\kappa/s)-\frac{m\delta^2}{c'k^4}}.
\end{equation*}
One gets the same results for the lower bound on the smallest singular value. 
Hence, the singular values are within $[1-\delta, 1+\delta]$ with probability higher than $1-\varepsilon$ provided
\begin{align*}
s\ln(e\kappa/s)- \frac{m\delta^2}{c'k^4} \leq \ln(\varepsilon/2).
\end{align*}
Solving for $m$ gives us the desired result.

\end{proof}

\underline{Notes:}
\begin{itemize}
\item This result generalizes the results of~\cite{Eldar2009group} in two ways: it considers random variables that are not only Gaussian, but also sub-Gaussian and even those that may have non-independent entries. 
Moreover, the use of weights allows for more flexibility in the model. 
Keep in mind that setting all weights uniformly equal to 1 gives the result of~\cite{Eldar2009group} for Gaussian random variables.
\item The use of isotropic sub-Gaussian rows is important to generate interest in such a model. 
Indeed, the whole idea of weighted compressed sensing is to recover vectors for which we have prior knowledge / belief about how the support should look like. 
What this means will become clearer in the section below. 
If we dealt only with Gaussian (or in general for entries i.i.d.  according to a unique probability distribution) random variables, those weights would not bare any meaning.
\end{itemize}

For the purpose of comparison, assume that the blocks all have the same size. 
In this case, $\kappa = N/d = B := |\Bcal|$. 
Let $k \leq s \times \kappa$ be the true sparsity of the signal (measured independently of the block structure). 
Our results tell that a number of measurements scaling as $m \geq C \frac{k}{\kappa}\ln(e k)$, which is an improvement over results for traditional compressed sensing. 

\subsection{Joint sparsity and weighted sparse recovery}
Continuing the discussion above, in which all the blocks have the same dimension, we may consider the case of joint sparsity in a multiple measurements setup. 

\subsubsection{RIP for tensor product matrices}

Here, we have $d$ vectors, each of size $B$ which are measured by a single design matrix $A \in \Rbb^{m \times B}$. 
This problem can be modeled as 
$$
Y = A X + \Theta,
$$
where $X = [\xbf_1, \cdots, \xbf_d] \in \Rbb^{B \times d}$ and similarly $Y = [\ybf_1, \cdots, \ybf_d]$ with $\ybf_i = A \xbf_i$ plus some noise, which is contained in the matrix $\Theta$. 
The term \textit{joint sparsity} refers to the case where all the vectors $\xbf_i$ share the same sparsity pattern. 
Letting $S = \cup_{i=1}^d\operatorname{supp}(\xbf_i)$, we say that the matrix $X$ has a joint weighted sparsity of order $s$ (with respect to a sequence of weights $\omega_1, \cdots, \omega_B)$ if $\sum_{i \in S} \omega_i^2 \leq s$. 

This is an example of a block sparse model in the sense that, vectorizing every matrices row wise, yields (forgetting the noise for now)
\begin{equation*}
\wY = \wA\wX,
\end{equation*}
where $\widetilde{Y}$ (resp. $\widetilde{X}$) contains all the rows of $Y$ (resp. $X$) stacked one above the other in a column format and $\widetilde{A} = A \otimes I$ is the tensor product of $A$ and the identity matrix.

The next theorem generalizes results from~\cite{Eldar2009group}: 
\begin{atheorem}
\label{thm:equivalenceFrobTrad}
Let $A \in \Rbb^{m \times B}$ and let $\widetilde{A}= A \otimes I_d$. 
$A$ satisfies the $WRIP(s,\delta)$ if and only if $\widetilde{A}$ satisfies $WBRIP(s,\delta)$.
\end{atheorem}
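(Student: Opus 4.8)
The plan is to set up a dictionary between vectors in $\Rbb^{Bd}$ carrying the tensor block structure and matrices in $\Rbb^{B\times d}$, and then verify the two implications separately. Identify any $\wX \in \Rbb^{Bd}$ (with the $B$-block structure, each block of size $d$) with the row-wise vectorization of a matrix $X \in \Rbb^{B\times d}$, so that the $j$-th block of $\wX$ is the $j$-th row $X_{j,:}$ of $X$. First I would record two elementary identities. First, since $\wA = A\otimes I_d$ maps the $i$-th length-$d$ block of $\wX$ to $\sum_{j=1}^{B}A_{ij}X_{j,:}$, which is exactly the $i$-th row of $AX$, one has $\wA\wX = \mathrm{vec}_{\mathrm{row}}(AX)$, hence $\|\wA\wX\|_2 = \|AX\|_F$ and $\|\wX\|_2 = \|X\|_F$. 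Second, the weighted block sparsity of $\wX$ equals $\sum_{j:\,X_{j,:}\neq 0}\omega_j^2 = \omega(S)$, where $S$ denotes the set of nonzero rows of $X$ (equivalently $S=\bigcup_{k=1}^{d}\supp(X_{:,k})$); in particular $\|\wX\|_0^{(\omega)}\le s$ is precisely the condition $\omega(S)\le s$. Recall that $\mathrm{WRIP}(s,\delta)$ means $(1-\delta)\|\xbf\|_2^2\le\|A\xbf\|_2^2\le(1+\delta)\|\xbf\|_2^2$ for all $\xbf\in\Rbb^{B}$ with $\|\xbf\|_0^{(\omega)}=\sum_{j\in\supp\xbf}\omega_j^2\le s$.

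For the implication $\mathrm{WRIP}(s,\delta)\Rightarrow\mathrm{WBRIP}(s,\delta)$, I would take $\wX$ with $\|\wX\|_0^{(\omega)}\le s$ and its associated matrix $X$, and split everything column by column: $\|X\|_F^2=\sum_{k=1}^{d}\|X_{:,k}\|_2^2$ and $\|AX\|_F^2=\sum_{k=1}^{d}\|AX_{:,k}\|_2^2$. Each column $X_{:,k}$ is supported inside $S$, so $\|X_{:,k}\|_0^{(\omega)}\le\omega(S)\le s$, and the $\mathrm{WRIP}$ of $A$ gives $(1-\delta)\|X_{:,k}\|_2^2\le\|AX_{:,k}\|_2^2\le(1+\delta)\|X_{:,k}\|_2^2$ for every $k$. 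Summing over $k$ and translating back through the two identities above yields the $\mathrm{WBRIP}$ inequalities for $\wX$, with the same $\delta$.

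For the converse $\mathrm{WBRIP}(s,\delta)\Rightarrow\mathrm{WRIP}(s,\delta)$, given $\xbf\in\Rbb^{B}$ with $\|\xbf\|_0^{(\omega)}\le s$ I would embed it as the first column of a matrix, $X:=\xbf\,e_1^\top\in\Rbb^{B\times d}$. Its nonzero rows are exactly $\supp(\xbf)$, so the corresponding $\wX$ satisfies $\|\wX\|_0^{(\omega)}=\|\xbf\|_0^{(\omega)}\le s$, while $\|X\|_F=\|\xbf\|_2$ and $\|AX\|_F=\|A\xbf\|_2$. Applying the $\mathrm{WBRIP}$ of $\wA$ to this $\wX$ and using the identities gives $(1-\delta)\|\xbf\|_2^2\le\|A\xbf\|_2^2\le(1+\delta)\|\xbf\|_2^2$, i.e. the $\mathrm{WRIP}$ of $A$; note that since $\delta$ is literally the same on both sides, the restricted isometry constants coincide, not merely their validity.

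The only genuinely delicate point — and the one step I would double-check carefully — is the first identity: pinning down the vectorization convention so that $A\otimes I_d$ really does implement $X\mapsto AX$ row-wise and $\|\wA\wX\|_2=\|AX\|_F$. Once that bookkeeping is fixed, both directions are immediate: one is just splitting a Frobenius norm into its columns, the other just embedding a vector as a single column of an otherwise zero matrix.
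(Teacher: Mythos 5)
Your proof is correct and follows essentially the same route as the paper: identify $\wA\wX$ with the row-wise vectorization of $AX$ so that both sides of the (W)RIP become Frobenius-norm inequalities, then split column-by-column for one implication and embed a single weighted $s$-sparse vector into a matrix for the other. The only cosmetic difference is that in the converse direction the paper uses a matrix consisting of $d$ copies of $\xbf$ (and divides by $d$), whereas you use $\xbf\,e_1^\top$; both are equally valid.
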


This result proves two things. 1) that we will not get more recovery matrices by using the joint sparsity structure but 2) all matrices used for the recovery of a single vector can be used (after a tensor product) for the recovery of joint sparse vectors. 
In particular, in the example we derive afterwards in the context of high-dimensional parametric PDEs, we may use the sensing matrices used for orthonormal systems to approximate the solution in a polynomial chaos expansion. 

\begin{proof}
Remember that the traditional $WRIP(s,\delta)$ of a matrix $A$ reads 
\begin{equation}
\label{eq:tradRIP}
(1-\delta)\|\xbf\|_2^2 \leq \|A\xbf\|_2^2 \leq (1+\delta)\|\xbf\|_2^2, \quad \text{ for all weighted $s$-sparse } \xbf. 
\end{equation}
Before we show the equivalence, let us rephrase the WBRIP and WRIP for this particular tensor product structure. 
Note that for any vector $\widetilde{X} \in \Rbb^N$ vectorized from a matrix $X \in \Rbb^{B \times D}$, $BD = N$, we have 
\begin{equation*}
\|\widetilde{A}\widetilde{X}\|_2^2 = \operatorname{Tr}(X^TA^TAX) = \|AX\|_\text{Frob}^2 \quad \text{ and } \quad \|\widetilde{X}\|_2^2 = \|X\|_\text{Frob}^2. 
\end{equation*}
Hence the WBRIP is equivalent to 
\begin{equation}
\label{eq:FrobRIP}
(1-\delta)\|X\|_\text{Frob}^2 \leq \|AX\|_\text{Frob}^2 \leq (1+\delta)\|X\|_\text{Frob}^2\quad \text{ for all $s$-weighted joint sparse } X.
\end{equation}

Assume~\eqref{eq:FrobRIP} holds. Let $\xbf \in \Rbb^B$ be a weighted $s$ sparse vector. 
And let $X$ be the matrix with $d$ copies of $\xbf$. 
Then $\|X\|_{Frob}^2 = d \|\xbf\|_2^2$ and $\|AX\|_\text{Frob}^2 = d\|A\xbf\|_2^2$. 
$X$ being weighted joint sparse of order $s$ and simplifying by $d$, \eqref{eq:tradRIP} follows. 

Conversely, assume that~\eqref{eq:tradRIP} holds and let $\xbf_1, \cdots, \xbf_d$ be $d$ $B$-dimensional vectors which all are weighted $s$-sparse, with the same sparsity pattern. 
It holds
$$
(1-\delta)\|\xbf_i\|_2^2 \leq \|A\xbf_i\|_2^2 \leq (1+\delta)\|\xbf_i\|_2^2, \quad \text{for all } 1\leq i \leq d.
$$
Summing for all $i$ results in 
$$
(1-\delta)\sum_{i=1}^d\|\xbf_i\|_2^2 = (1-\delta)\|X\|_\text{Frob}^2 \leq \sum_{i=1}^d\|A\xbf_i\|_2^2 = \|AX\|_\text{Frob}^2 \leq (1+\delta)\sum_{i=1}^d\|\xbf_i\|_2^2 = (1+\delta)\|X\|_\text{Frob}^2.
$$
\end{proof}

A consequence is the following theorem, which we require in the next section
\begin{atheorem}
\label{thm:wbripBOS}
For some parameters $\delta, \gamma \in (0,1)$,
let $(\psi_j)_{j \in \Lambda}$ with $|\Lambda| = B$ a finite dimensional orthonomal basis with orthogonalization measure $\eta$. 
Let $\omega = (\omega_i)_{1 \leq i \leq B}$ with $\omega_i \geq \|\psi_j\|_\infty$ be a sequence of weights. 
Let 
\begin{equation*}
m \geq C \delta^{-2} s \max\{\log^3(s)\log(B), \log(1/\gamma)\}.
\end{equation*}
Given $m$ samples $\ybs^{(i)}$, drawn independently at random from $\eta$, let $A \in \Cbb^{m \times B}$ be the design matrix whose entries are defined as $a_{ij} = \frac{1}{\sqrt{m}}\psi_j(\ybs^{(i)})$. 

Then $A \otimes I_J$ fulfills the weighted joint sparse RIP defined in~\eqref{eq:FrobRIP}.
\end{atheorem}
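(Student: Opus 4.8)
The plan is to reduce the statement to the unweighted-block situation via Theorem~\ref{thm:equivalenceFrobTrad}, and then to invoke the known weighted sparse RIP result for bounded orthonormal systems (the scalar-valued analogue, as established by Rauhut and Ward). Concretely, Theorem~\ref{thm:equivalenceFrobTrad} tells us that $A \otimes I_J$ satisfies the weighted joint sparse RIP~\eqref{eq:FrobRIP} with constant $\delta$ if and only if $A$ itself satisfies the (scalar) $WRIP(s,\delta)$, i.e.\ the weighted restricted isometry property~\eqref{eq:tradRIP} over weighted $s$-sparse vectors in $\Rbb^B$. So the entire burden of proof is to show that the $m \times B$ matrix $A$ with entries $a_{ij} = \frac{1}{\sqrt m}\psi_j(\ybs^{(i)})$, formed from $m$ i.i.d.\ samples drawn from the orthogonalization measure $\eta$, satisfies $WRIP(s,\delta)$ with high probability under the stated sample complexity $m \gtrsim \delta^{-2} s\max\{\log^3(s)\log(B),\log(1/\gamma)\}$.

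First I would set up the randomized bounded orthonormal system: the sampling matrix $A$ has rows that are independent realizations of the random vector $\big(\psi_1(\ybs),\dots,\psi_B(\ybs)\big)$ with $\ybs \sim \eta$; orthonormality of $(\psi_j)$ with respect to $\eta$ gives $\Ebb[a\,a^{*}] = \frac{1}{m}I_B$, so the columns are isotropic in the appropriate normalization, and the weight hypothesis $\omega_i \ge \|\psi_i\|_\infty$ is exactly the boundedness condition that makes the weighted theory applicable. Then I would cite the weighted sparse RIP theorem for bounded orthonormal systems from~\cite{Rauhut13wCS} (their Theorem on $\ell^1_\omega$-recovery, which establishes precisely that such $A$ satisfies the weighted RIP of order $s$ with constant $\delta$ with probability at least $1-\gamma$ provided $m \ge C\delta^{-2} s\max\{\log^3 s\log B,\log(1/\gamma)\}$), observing that the weight condition there matches ours. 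Since the statement of our theorem has been engineered to have exactly the same sample-complexity form, essentially no arithmetic is needed; one only needs to check that the normalization conventions (factor $1/\sqrt m$ in $a_{ij}$, definition of $\|\cdot\|_0^{(\omega)}$) align with those in the cited reference, which they do.

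With $WRIP(s,\delta)$ for $A$ in hand, the conclusion is immediate: by Theorem~\ref{thm:equivalenceFrobTrad}, $\wA = A \otimes I_J$ satisfies $WBRIP(s,\delta)$ with respect to the row-stacking block structure, which by the equivalence established in the proof of that theorem (lines showing $\|\wA\wX\|_2^2 = \|AX\|_{\text{Frob}}^2$ and $\|\wX\|_2^2 = \|X\|_{\text{Frob}}^2$) is precisely the weighted joint sparse RIP~\eqref{eq:FrobRIP}. This closes the argument.

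The main obstacle is not conceptual but bookkeeping: I expect the delicate point to be verifying that the version of the weighted RIP theorem for bounded orthonormal systems in~\cite{Rauhut13wCS} is stated with hypotheses that exactly cover our setting — in particular that the weights are required only to dominate the sup-norms of the basis functions (not, say, some stronger majorization), that the measure $\eta$ need not be a product measure, and that the $\log^3 s$ (rather than $\log^4 s$ or $s\log^3$) exponent is the one available. If the cited bound were stated with a slightly different polylogarithmic factor, one would either adjust the statement of our theorem accordingly or supply a short self-contained chaining argument; but assuming the reference provides the stated form, the proof is a two-line reduction and I would present it as such.
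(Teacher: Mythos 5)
Your proposal matches the paper's own proof exactly: the paper likewise disposes of the theorem in one line by combining Theorem~\ref{thm:equivalenceFrobTrad} with the weighted RIP result for sampled bounded orthonormal systems from~\cite{Rauhut13wCS}. Your additional care about normalization conventions and the precise polylogarithmic factor in the cited reference is a reasonable bit of due diligence that the paper itself skips.
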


The proof is simply the result of Theorem~\eqref{thm:equivalenceFrobTrad} applied to matrices from orthonormal systems fulfilling the weighted RIP, see ~\cite{Rauhut13wCS}. 

\subsection{Parametric function approximation}

In this section, we analyze the use of joint sparsity for the approximation of parametric functions. 
A parametric function is defined as 
\begin{equation*}
f: \begin{array}{ccc} \Omega \times \Ucal  &\to &\Rbb \\ (x, \ybs) &\mapsto &f(x;\ybs).\end{array}
\end{equation*}
Here $\ybs$ denotes a vector of parameters while $\Omega \subset \Rbb^{n}$ with usually $n \in \{1,2,3\}$ denotes the spatial domain for the spatial variable $x$. 
We want $\Ucal = [-1,1]^d$ to be high-dimensional (up to countably many parameters)\footnote{Note the unorthodox semi-colon notation to emphasize the difference in roles played by $x$ and $\ybs$.}. 

\begin{assumption}[Decoupling] 
\label{assump:decoupling}
The parameter and spatial coordinates are decoupled. 
This implies that $f$ lives in a tensor product space $f \in \Xcal \otimes P$. 
Throughout, $\Xcal$ will denote a Hilbert space, associated to the norm $\|\cdot\|_\Xcal$, and inner product $\langle \cdot, \cdot \rangle_\Xcal$ (we drop the subscript when it is clear from the context). 
\end{assumption}

\begin{assumption}[Tensorized polynomials] 
\label{assump:tensorPoly}
The parameter space is itself a tensor space and is spanned by (tensorized) polynomials. 
Hence, given an orthonormal basis of polynomials $(T_i)_{i \in \Nbb_0}$ for $[-1,1]$, we may write 
$P \subset \operatorname{span}\{T_\nu: \nu \in \Fcal\}$ where 
\begin{equation}
\label{eq:fcal}
\Fcal := \{ \nu \in \Nbb_0^d: \|\nu\|_0 := \supp(\nu) < \infty \}
\end{equation} is the set of multi-indices with finite support and $T_\nu(\ybs) = \prod_{i \in \operatorname{supp}(\nu)}T_{\nu_i}(y_i)$ corresponds to the tensorized polynomial with (multi)index $\nu \in \Fcal$.
We will for now assume that only finitely many polynomials are needed; i.e. there exists $\Lambda \subset \Fcal$ with $|\Lambda| = N < \infty$ such that $\Ycal = \operatorname{span}\{T_\nu: \nu \in \Lambda\}$. 
\jlb{We assume moreover this family of polynomial is orthonormal with respect to a measure $\eta$ and denote the associated inner product as $\langle \cdot, \cdot \rangle$.}
\end{assumption}

Since $\Xcal$ is a Hilbert space, it enjoys a countable family as an orthonormal basis $(\varphi_j)_{j \in \Jcal }$. 
All this together allows us to write the decoupling equation
\begin{equation*}
f(x;\ybs) = \sum_{\nu \in \Lambda, j \in \Jcal} f_\nu^j \varphi_j(x) T_\nu(\ybs). 
\end{equation*}

The goal is to find an approximation $f^\#$ which is \textit{close enough} to $f$ in a Bochner sense, for $p \geq 1$:
\begin{equation}
\label{eq:BochnerNorm}
\|f-f^\#\|_{L^p(\Xcal; \Ucal, \eta)}^p = \int \limits_{\ybs \in \Ucal} \|f(\cdot;\ybs) - f^\#(\cdot;\ybs)\|_\Xcal^p\dif{\eta}(\ybs)
\end{equation}
with the classical adaptation via $\operatorname{ess-sup}$ in the case $p = \infty$. 

Given the assumptions above, this can be easily recast to the problem of recovering a joint-sparse matrix as in the previous section. 
This matrix can in theory be infinite, as it is indexed over $j \in \Jcal$. 
In applications, we deal with finite subspaces of the Hilbert space $\Xcal$. 
Moreover, since $(\varphi_j)_{j \in \Jcal}$ is assumed to be an orthonormal basis, its associated pair of analysis / synthesis operators $S: \Xcal \to \ell^2(\Jcal), S^*: \ell^2(\Jcal) \to \Xcal$ are \jlb{isometries}
\begin{equation*}
    S: \left\{\begin{array}{ccl}
        \Xcal &\to &\ell^2(\Jcal)  \\
        v &\mapsto &\vbf := Sv = \left( \langle v, \varphi_j \rangle_\Xcal \right)_{j \in \Jcal} 
    \end{array} \right. \, \text{ and } 
    S^*: \left\{\begin{array}{ccl} 
        \ell^2(\Jcal) &\to &\Xcal   \\
        \vbf := \left(v_j\right)_{j \in \Jcal} &\mapsto &v := S^*\vbf = \sum_{j \in \Jcal}v_j \varphi_j 
    \end{array} \right.
\end{equation*}
In particular, such mappings preserve the inner products and satisfy Parseval's identity, for all $v, w \in \Xcal$, 
\begin{align}
    \langle v, w \rangle_\Xcal &= \langle Sv, Sw\rangle_{\ell^2(\Jcal)}, \nonumber \\
    \label{eq:parseval} \|v\|_\Xcal &= \|Sv\|_{\ell^2(\Jcal)}.
\end{align}

Two values for $p$ in~\eqref{eq:BochnerNorm} are particularly important: $p=2$ and $p=\infty$. 
Indeed\footnote{One could arrive at the same result and spare some time reading the derivation by introducing the inner product induced by the tensor product of two Hilbert spaces ($\Xcal$ and $\Ucal$). We choose not to use this --valid-- abstract setting.} 	, for $p=2$,
\begin{align}
\|f-f^\#\|_{L^2}^2 &= \int \limits_{\ybs \in \Ucal} \|f(\ybs) - f^\#(\ybs)\|_\Xcal^2 \dif{\eta}(\ybs)  \nonumber \\
									 &= \int \limits_{\ybs \in \Ucal} \left\langle f(\ybs) - {f^\#}(\ybs), f(\ybs) - {f^\#}(\ybs) \right \rangle_{\Xcal} \dif{\eta}(\ybs) \nonumber \\
									&= \int \limits_{\ybs \in \Ucal} \left\langle \sum_{\nu \in \Lambda} \left(f_\nu - f^\#_\nu\right) T_\nu(\ybs), \sum_{\mu \in \Lambda} \left(f_\mu - f^\#_\mu \right) T_\mu(\ybs)\right\rangle_\Xcal \dif{\eta}(\ybs) \nonumber \\
									&= \int \limits_{\ybs \in \Ucal} \sum_{\nu \in \Lambda} \sum_{\mu \in \Lambda} T_\nu(\ybs)T_\mu(\ybs)\langle f_\nu - f^\#_\nu, f_\mu - f^\#_\mu\rangle_\Xcal \dif{\eta}(\ybs) \nonumber \\
									& = \sum_{\nu \in \Lambda} \sum_{\mu \in \Lambda} \langle f_\nu - f^\#_\nu, f_\mu - f^\#_\mu\rangle_\Xcal \int \limits_{\ybs \in \Ucal} T_\nu(\ybs)T_\mu(\ybs) \dif{\eta}(\ybs) \nonumber \\
									&= \sum_{\nu \in \Lambda}\sum_{\mu \in \Lambda} \langle f_\nu - f^\#_\nu, f_\mu - f^\#_\mu\rangle_\Xcal \langle T_\nu, T_\mu \rangle_\eta \nonumber \\
									&= \sum_{\nu \in \Lambda} \| f_\nu - f^\#_\mu\|_\Xcal^2 =: \|f-f^\#\|_{\Xcal,2}^2. \label{eq:approxBochner2}
\end{align}
If $F = (Sf_\nu)_{\nu \in \Lambda}$ corresponds to the matrix of coefficients: $F_{ij} = f_i^j, i \in \Lambda, j \in \Jcal$, the last equality is given by $ \|F-F^\#\|_\text{Frob}^2$. 
Note that we have abused some notations: $L^2$ as a shorthand for the Bochner norm, dropping the $x$ or $\cdot$ in the mappings $f$ evaluated at parameter vectors $\ybs$. 
For $p = \infty$: 
\begin{align}
\|f-f^\#\|_{L^\infty} &= \|\sum_{\nu \in \Lambda}\left(f_\nu-f^\#_\nu\right)T_\nu\|_{L^\infty} = \max_{\ybs \in \Ucal} \| \sum_{\nu \in \Lambda} \left( f_\nu-f^\#_\nu \right)T_\nu(\ybs)\|_\Xcal \nonumber \\
											&\leq \sum_{\nu \in \Lambda} \|f_\nu - f^\#_\nu\|_\Xcal\|T_\nu\|_\infty \nonumber \\
											&\leq \sum_{\nu \in \Lambda} \|f_\nu - f^\#_\nu \|_{\Xcal} \omega_\nu = \|f-f^\#\|_{\Xcal,1}^{(\omega)} = \|F-F^\#\|_{2,1}^{(\omega)}, \label{eq:approxBochnerInf}
\end{align}
assuming we have chosen a weight sequence $(\omega_\nu)_{\nu \in \Lambda}$ such that $\|T_\nu\|_\infty \leq \omega_\nu$. 
Eqs.~\eqref{eq:approxBochner2} and~\eqref{eq:approxBochnerInf} tell us that the approximation of the function can be estimated using mixed norms, as introduced for the block or joint sparse model. 
These norms have been extended to the case of blocks being vectors in function spaces, with the associated norms. 
More precisely, using sparse approximation techniques, Theorem~\ref{thm:recoveryRIP} gives us bounds on the recovery when using block-sparse minimization. 
Assuming that our target (parametric) function $f$ is well approximated by block/joint-sparse vectors, then Theorem~\ref{thm:recoveryRIP} indeed gives an accuracy of the approximation and a number of function evaluations required. 
This is the main point of our next theorem.

\begin{atheorem}
\label{thm:fctApprox}
Let $(T_\nu)_{\nu \in \Lambda}$, $|\Lambda| = B < \infty$ be a finite orthonormal system with orthogonalization measure $\eta$. 
Let $\omega = (\omega_\nu)_{\nu \in \Lambda}$, with $\omega_\nu \geq \|T_\nu\|_\infty$, be a sequence of weights. 
Let $s \geq 2 \|\omega\|_\infty^2$ and $\gamma \in (0,1)$. Let
$$
m \geq C s\max\{\log^3(s)\log(B), \log(\gamma^{-1})\}
$$ 
and draw $m$ samples $\ybs^{(i)}$, $1 \leq i \leq m$ independently at random from $\eta$. 
Furthermore, let $A$ be the sensing matrix obtained by evaluating the basis functions at the given samples; i.e. $A_{i,\nu} = T_\nu(\ybs^{(i)})$. 

Then, with probability at least $1-\gamma$, the following holds for all functions $f = \sum_{\nu \in \Lambda} f_\nu T_\nu \in \Xcal \otimes P_\Lambda$ with $f_\nu \in \Xcal = \operatorname{span}\{\varphi_j: j \in \Jcal\}$, where $\left( \varphi_j \right)_{j \in \Jcal}$ is a finite orthonormal system. 
For $1 \leq i \leq m$, let $Y_i^T := (\langle f(\ybs^{(i)}), \varphi_j \rangle)_{j \in \Jcal} + \varepsilon_i \in \Rbb^{1 \times \Jcal}$ be the (noisy) coefficients of the target function evaluated at $\ybs^{(i)}$, with $\sqrt{\sum_i \|\varepsilon_i\|_2^2} \leq \varepsilon$. 
Let $\wF$ be the solution of 
\begin{equation*}
\min_{Z \in \Rbb^{\Lambda \times \Jcal}}\|Z\|_{2,1}^{(\omega)}, \quad \text{ subject to } \|AZ-Y\|_\text{Frob} \leq \varepsilon.
\end{equation*}
and consider $\wf(x;\ybs) := \sum_{\nu \in \Lambda}\sum_{j \in \Jcal} \wF_\nu^j T_\nu(\ybs)\varphi_j(x)$. 
Then 
\begin{align*}
\|f-\wf\|_{L^\infty} \leq \|F-\wF\|_{2,1}^{(\omega)} &\leq  c\sigma_s(F)_{2,1}^{(\omega)} + \frac{d}{\sqrt{m}}\varepsilon,\\
\|f-\wf\|_{L^2} = \|F-\wF\|_{2,2}^{(\omega)} &\leq \frac{c'}{\sqrt{s}}\sigma_s(F)_{2,1}^{(\omega)} + \frac{d'}{\sqrt{m}}\varepsilon,
\end{align*}
for some universal constants $c,d$ and $c',d'$. 
\end{atheorem}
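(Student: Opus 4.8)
The plan is to assemble the statement from three pieces that are already in place: the identities~\eqref{eq:approxBochner2}--\eqref{eq:approxBochnerInf}, which turn the Bochner errors into weighted mixed-norm errors of coefficient matrices; the weighted joint-sparse RIP of Theorem~\ref{thm:wbripBOS}; and the deterministic recovery guarantee of Theorem~\ref{thm:recoveryRIP} (equivalently Corollary~\ref{cor:nspBPDN}). No genuinely new estimate is needed — the work is in fitting the pieces together and tracking normalizations.

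First I would fix the dictionary. With $F = (\langle f_\nu,\varphi_j\rangle)_{\nu\in\Lambda,\,j\in\Jcal}$ and $\wF$ its computed counterpart, the derivations~\eqref{eq:approxBochner2} and~\eqref{eq:approxBochnerInf} give $\|f-\wf\|_{L^\infty}\le\|F-\wF\|_{2,1}^{(\omega)}$ and $\|f-\wf\|_{L^2}=\|F-\wF\|_{2,2}^{(\omega)}$, so it suffices to bound the right-hand sides. Next, since $(AF)_{i,j}=\sum_\nu T_\nu(\ybs^{(i)})\langle f_\nu,\varphi_j\rangle=\langle f(\ybs^{(i)}),\varphi_j\rangle$, the data obeys $Y = AF + \mathcal{E}$ with $\|\mathcal{E}\|_{\mathrm{Frob}}=\sqrt{\sum_i\|\varepsilon_i\|_2^2}\le\varepsilon$. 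Hence the program defining $\wF$ is a weighted joint-sparse version of basis pursuit denoising, and after row-wise vectorization it is exactly the weighted block $\ell^1$ program~\eqref{eq:blockBPDN} for the block structure indexed by $\nu\in\Lambda$, each block a vector in $\ell^2(\Jcal)$; since $\Jcal$ is assumed finite, everything lives in a finite-dimensional space and the earlier results apply verbatim.

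Then I would bring in the RIP. Fix once and for all a $\delta<\tfrac{1}{2\sqrt2+1}$ meeting condition~\eqref{eq:condDelta2s}. By Theorem~\ref{thm:wbripBOS} (itself Theorem~\ref{thm:equivalenceFrobTrad} combined with the weighted RIP for bounded orthonormal systems of~\cite{Rauhut13wCS}), applied at sparsity level $2s$, the hypothesis $m\ge Cs\max\{\log^3(s)\log(B),\log(\gamma^{-1})\}$ — with $C$ now also absorbing the fixed $\delta^{-2}$ and the harmless passage from $s$ to $2s$ — ensures, with probability at least $1-\gamma$, that $\tfrac{1}{\sqrt m}A\otimes I_{|\Jcal|}$ satisfies $\mathrm{WBRIP}(2s,\delta)$, using $\omega_\nu\ge\|T_\nu\|_\infty$. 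Since the constraint $\|AZ-Y\|_{\mathrm{Frob}}\le\varepsilon$ is identical to $\|\tfrac{1}{\sqrt m}AZ-\tfrac{1}{\sqrt m}Y\|_{\mathrm{Frob}}\le\varepsilon/\sqrt m$, the minimizer $\wF$ also solves the normalized program with noise level $\eta=\varepsilon/\sqrt m$. Applying Theorem~\ref{thm:RIP-NSP} to pass from $\mathrm{WBRIP}(2s,\delta)$ to the $\ell^2_\omega$-BRNSP, and then Corollary~\ref{cor:nspBPDN} in its Frobenius/vectorized form with $\eta=\varepsilon/\sqrt m$, yields the bounds on $\|F-\wF\|_{2,1}^{(\omega)}$ and $\|F-\wF\|_{2,2}^{(\omega)}$ in terms of $\sigma_s(F)_{2,1}^{(\omega)}$ and $\varepsilon/\sqrt m$, with constants depending only on $\delta$ through $C_\rho,D_{\rho,\tau}$ and hence universal. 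Combining with the identities of the previous paragraph gives the two displayed estimates.

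The genuinely hard analytic input — the chaining argument behind the RIP for weighted bounded orthonormal systems — is entirely outsourced to Theorem~\ref{thm:wbripBOS} and~\cite{Rauhut13wCS}, so the only real obstacles that remain are bookkeeping ones: checking that the block structure produced by the tensor factorization $\wA=A\otimes I$ coincides with the joint-sparse structure for which Theorems~\ref{thm:recoveryRIP} and~\ref{thm:equivalenceFrobTrad} were established, and carrying the $\sqrt m$ normalization (and thus the noise level $\eta=\varepsilon/\sqrt m$) consistently through Corollary~\ref{cor:nspBPDN} so that the final constants come out independent of $s$, $m$, $B$, $\Lambda$ and $\Jcal$.
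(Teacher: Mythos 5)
Your proposal is correct and follows essentially the same route as the paper: the paper likewise obtains the result by combining the Bochner-norm identities~\eqref{eq:approxBochner2}--\eqref{eq:approxBochnerInf}, the weighted joint-sparse RIP of Theorem~\ref{thm:wbripBOS} for $A/\sqrt{m}$, and the deterministic recovery guarantee of Theorem~\ref{thm:recoveryRIP} (i.e.\ Theorem~\ref{thm:RIP-NSP} plus Corollary~\ref{cor:nspBPDN}), with Parseval's identity~\eqref{eq:parseval} transferring $\Xcal$-norms to $\ell^2$ coefficient norms. Your explicit tracking of the $\sqrt{m}$ normalization (noise level $\eta=\varepsilon/\sqrt{m}$ for the rescaled program) is a point the paper leaves implicit, and it is handled correctly.
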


We will in fact prove a stronger theorem, which works in (potentially) infinite dimensional spaces $\Xcal$, without mentioning a specific basis.

\begin{atheorem}
\label{thm:fctApproxInf}
Let $(T_\nu)_{\nu \in \Lambda}$, $|\Lambda| = B < \infty$ be a finite orthonormal system with orthogonalization measure $\eta$. 
Let $\omega = (\omega_\nu)_{\nu \in \Lambda}$ with $\omega_\nu \geq \|T_\nu\|_\infty$ be a sequence of weights. 
Let $s \geq 2 \|\omega\|_\infty^2$ and $\gamma \in (0,1)$. Let
$$
m \geq C s\max\{\log^3(s)\log(B), \log(\gamma^{-1})\}
$$ 
and draw $m$ samples $\ybs^{(i)}$, $1 \leq i \leq m$ independently at random from $\eta$. 
Furthermore, let $A$ be the sensing matrix obtained by evaluating the basis functions at the given samples; i.e. $A_{i,\nu} = T_\nu(\ybs^{(i)})$. 

Then with probability at least $1-\gamma$, the following holds for all functions $f = \sum_{\nu \in \Lambda} f_\nu T_\nu \in \Xcal \otimes P_\Lambda$ with $f_\nu \in \Xcal$. 
For $1 \leq i \leq m$, let 
$b^{(i)}$ approximates $f_\Lambda(\ybs^{(i)}) := \sum_{\nu \in \Lambda} f_\nu T_\nu(\ybs^{(i)})$ with accuracy $\varepsilon$. 
Let $\wF$ be the solution of 
\begin{equation*}
\min_{z \in \Xcal \otimes P_\Lambda}\|z\|_{\Xcal,1}^{(\omega)}, \quad \text{ subject to } \|Az-b\|_\text{2} \leq \sqrt{m}\varepsilon.
\end{equation*}
and consider $\wf(x;\ybs) := \sum_{\nu \in \Lambda}\wF_\nu(x) T_\nu(\ybs)$. 
Then 
\begin{align*}
\|f-\wf\|_{L^\infty} \leq \|f-\wf\|_{\Xcal,1}^{(\omega)} &\leq  c\sigma_s(f)_{\Xcal,1}^{(\omega)} + d\sqrt{s}\varepsilon,\\
\|f-\wf\|_{L^2} = \|f-\wf\|_{\Xcal,2}^{(\omega)} &\leq \frac{c'}{\sqrt{s}}\sigma_s(f)_{\Xcal,1}^{(\omega)} + d'\varepsilon,
\end{align*}
for some universal constants. 
\end{atheorem}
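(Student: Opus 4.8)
The plan is to reduce Theorem~\ref{thm:fctApproxInf} to the finite-dimensional recovery guarantee of Theorem~\ref{thm:recoveryRIP} together with the RIP statement of Theorem~\ref{thm:wbripBOS}, at the level of abstract Hilbert-space-valued coefficient sequences. First I would fix a notation: for $z = \sum_{\nu\in\Lambda} z_\nu T_\nu \in \Xcal\otimes P_\Lambda$, identify $z$ with the ``column vector'' $(z_\nu)_{\nu\in\Lambda}$ of elements of $\Xcal$, and define $\|z\|_{\Xcal,p}^{(\omega)}$ exactly as the block norm with the inner block norm being $\|\cdot\|_\Xcal$. The action of $A$ on such a $z$ is $(Az)^{(i)} = \sum_{\nu\in\Lambda} A_{i,\nu} z_\nu \in \Xcal$, and $\|Az\|_2^2 := \sum_{i=1}^m \|(Az)^{(i)}\|_\Xcal^2$. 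The key observation, made in Section~\ref{sec:matrices}, is that because $A$ acts only on the $\Lambda$-index and tensors with the identity on the $\Xcal$-factor, all of the arguments behind Theorem~\ref{thm:recoveryRIP} (the $\ell^2_\omega$-BRNSP from WBRIP, Corollary~\ref{cor:nspBPDN}, etc.) go through verbatim with $\Rbb^{d_b}$ replaced by $\Xcal$: every step used only the Hilbert-space structure of the block space and the inequality $|\langle Au,Av\rangle|\le \delta\|u\|\|v\|$ for disjointly supported weighted-sparse $u,v$, which is insensitive to whether the blocks are finite- or infinite-dimensional (indeed Theorem~\ref{thm:equivalenceFrobTrad} already made this explicit for $I_d$, and the same computation works with any separable Hilbert space in place of $\Rbb^d$).

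So the concrete steps are: (1) Invoke Theorem~\ref{thm:wbripBOS}/the weighted RIP for bounded orthonormal systems from~\cite{Rauhut13wCS} to conclude that, with probability at least $1-\gamma$, the matrix $A$ with $A_{i,\nu}=T_\nu(\ybs^{(i)})$ satisfies WRIP$(2s,\delta)$ with $\delta<\frac{1}{2\sqrt 2+1}$, under the stated sample bound $m\ge Cs\max\{\log^3(s)\log(B),\log(\gamma^{-1})\}$; (2) Note that this is exactly the hypothesis~\eqref{eq:condDelta2s} needed for the $\Xcal$-valued analogue of Theorem~\ref{thm:recoveryRIP}, applied to the block space $\Xcal^{\Lambda}$ with weights $\omega$; (3) Apply that theorem with the ``signal'' $F=(f_\nu)_{\nu\in\Lambda}$, measurements $b=(b^{(i)})_i$ satisfying $\|AF-b\|_2 = \big(\sum_i\|f_\Lambda(\ybs^{(i)})-b^{(i)}\|_\Xcal^2\big)^{1/2}\le\sqrt m\,\varepsilon$, and noise level $\eta=\sqrt m\varepsilon$, to get
\[
\|F-\wF\|_{\Xcal,1}^{(\omega)} \le c_\delta\,\sigma_s(F)_{\Xcal,1}^{(\omega)} + d_\delta\sqrt s\,(\sqrt m\varepsilon),\qquad
\|F-\wF\|_{\Xcal,2}^{(\omega)} \le \tfrac{c_\delta}{\sqrt s}\sigma_s(F)_{\Xcal,1}^{(\omega)} + d_\delta(\sqrt m\varepsilon);
\]
wait — rather, since the constraint in the theorem statement reads $\|Az-b\|_2\le\sqrt m\varepsilon$, one takes $\eta=\sqrt m\varepsilon$ and then $d\sqrt s\varepsilon$ should read $d\sqrt{s}\,\varepsilon$ after absorbing, i.e. I would double-check the bookkeeping so the final bounds come out as $d\sqrt s\varepsilon$ and $d'\varepsilon$ as stated (the $\sqrt m$'s cancel against the normalization $a_{ij}=\psi_j(\ybs^{(i)})$ versus $\frac1{\sqrt m}\psi_j(\ybs^{(i)})$ — this normalization discrepancy between Theorem~\ref{thm:wbripBOS} and the present statement is the one place to be careful).

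(4) Finally translate back from coefficient space to function space: by the computation~\eqref{eq:approxBochnerInf}, $\|f-\wf\|_{L^\infty}\le\|f-\wf\|_{\Xcal,1}^{(\omega)}=\|F-\wF\|_{\Xcal,1}^{(\omega)}$ using $\omega_\nu\ge\|T_\nu\|_\infty$, and by~\eqref{eq:approxBochner2} (which is an orthonormality/Parseval computation valid for $\Xcal$-valued coefficients) $\|f-\wf\|_{L^2}=\|F-\wF\|_{\Xcal,2}^{(\omega)}$; substituting the two displayed bounds finishes the proof. The main obstacle, such as it is, is not a deep one: it is verifying carefully that the entire chain Proposition~\ref{prop:lemmaL1NSP} $\to$ Corollary~\ref{cor:l2pestimates} $\to$ Corollary~\ref{cor:nspBPDN} $\to$ Theorem~\ref{thm:RIP-NSP} $\to$ Theorem~\ref{thm:recoveryRIP} uses $\Rbb^{d_b}$ only through its inner-product/Hilbert structure, so that it applies with $\Xcal$-valued blocks and with $\Lambda$ (rather than a finite index set of integers) as the block index set — and that the $\sqrt m$ normalization and the definition of the Frobenius/$\ell_2$ data-mismatch norm are matched up consistently between the BOS-RIP result and the present statement. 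I would state this Hilbert-space generalization once as a remark (all of Section~\ref{sec:recovery} holds verbatim with $\Rbb^{d_b}$ replaced by an arbitrary separable Hilbert space) and then the proof of Theorem~\ref{thm:fctApproxInf} is a two-line invocation.
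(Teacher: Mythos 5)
Your proposal is correct and follows essentially the same route as the paper: establish the WBRIP for the (suitably $\sqrt{m}$-normalized) sampling matrix via Theorem~\ref{thm:wbripBOS}, apply the recovery guarantee of Theorem~\ref{thm:recoveryRIP} to the $\Xcal$-valued coefficient blocks, and translate back to Bochner norms via the computations~\eqref{eq:approxBochner2} and~\eqref{eq:approxBochnerInf}. You are in fact more explicit than the paper about the two points it leaves implicit --- the $\sqrt{m}$ normalization bookkeeping and the verification that the finite-dimensional block arguments carry over verbatim to Hilbert-space-valued blocks --- which is a welcome addition rather than a deviation.
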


\begin{proof}
Given the number of measurements, the matrix $A/\sqrt{m}$ fulfills the WBRIP, according to Thm.~\ref{thm:wbripBOS}. 
This implies the approximation bounds by applying Theorem~\ref{thm:recoveryRIP} to the matrix of coefficients. 
Finally, the computations above this theorem yield the bounds on the error of the functions in terms of norms in Bochner spaces. 
\end{proof}

\begin{rmk}
\label{rmk:}
Theorem~\ref{thm:fctApprox} is obtained by applying Theorem~\ref{thm:fctApproxInf} and Parseval identity~\eqref{eq:parseval} so that the norms obtained on $\Xcal$ are transferred to an $\ell^2$ norm on the coefficients.
\end{rmk}

\section{Application: Approximation of high-dimensional parametric PDEs}
\label{sec:PDEsExamples}

\jlb{This section extends results for the numerical approximation of high-dimensional parametric elliptic PDEs. 
Using a compressed sensing approach,~\cite{Rauhut14CSPG,Bouchot15SLCSPG} were interested in computing \emph{quantities of interest} of a parametric solution. 
Namely~\cite{Rauhut14CSPG} introduced the method from a theoretical point of view, while~\cite{Bouchot15SLCSPG} investigates the numerical applicability of the method, approximating solutions to a diffusion problem in relatively high (parametric) dimensions; even infinite, when considering a truncated operator, as is explained later on in Eqs.~\eqref{eq:decayEnergyOperator} and ~\eqref{eq:weakTruncated}.
Here, we want to approximate the full-solution, point-wise in the spatial coordinate $\xbf$, uniformly for all parameter $\ybs$.
}
Note that an improved computational complexity may be obtained by a multi-level approach~\cite{Bouchot16MLCSPG}, but this is left aside to avoid overcomplicating the notations and exposition. 
While in the previous Section we were considering any polynomial basis for the space of parameters, we will from now on specialize our results to the \Tscheb system.

Let us first recall that the univariate \Tscheb polynomials form an orthogonal family with respect to the orthogonolization measure 
$$
\dif{\eta_1}(t) = \frac{\dif{t}}{\pi\sqrt{1-t^2}}
$$
and defined as 
$$
T_j(t) = \sqrt{2}\cos\left(j \arccos(t)\right), \quad j \geq 1 \qquad T_0 \equiv 1.
$$
We are dealing here with multivariate polynomial. Define, for $\ybs \in \Ucal$
\begin{equation*}
\dif{\eta}(\ybs) := \bigotimes_{j \geq 1}\dif{\eta_1}(y_j) 
\end{equation*}
the tensorized orthogonalization measure and, for $\nu \in \Fcal$
\begin{equation*}
T_\nu(\ybs) = \prod_{j: \nu_j \neq 0}T_{\nu_j}(y_j).
\end{equation*}
The set of tensorized \Tscheb polynomials is orthonormal with respect to $\dif{\eta}$. 
Moreover, it holds
\begin{equation}
\label{eq:linfTscheb}
\|T_\nu\|_\infty = 2^{\|\nu\|_0/2}.
\end{equation}

\subsection{Affine parametric operator equations}

We consider a family of operator equations
\begin{equation}
\label{eq:affineFamily}
A(\ybs): \Xcal \to \Ycal', \quad A(\ybs) = A_0 + \sum_{j \geq 1}y_jA_j
\end{equation}
and try to (numerically) approximate 
$u(\cdot;\ybs) \in \Xcal$ such that $A(\ybs)u(\ybs) = f$, uniformly for all parameter $\ybs$. 
Should the operator $A(\ybs)$ be invertible for all $\ybs$, this problem accounts to stably inverting the family of operators. 
While the linear dependence may be seen as a strong limitation, one should remember that such decompositions may be obtained for instance by variance decomposition methods of stochastics fields such as the Karhunen Lo\`{e}ve decomposition, see~\cite{cohen2011analytic,SchTodor06}. 
To generalize even more, the only requirement for this work to apply, is that the solution be analytic with respect to any finite subset of the parameters; the linear dependence is just one example where this is the case.
Throughout, we will assume the \textit{mean field} $A_0$ to be invertible.

A prototypical example is given by the Poisson problem in divergence form
\begin{equation*}
-\operatorname{div} (a(\cdot; \ybs) \nabla u) = f, \text{ in } \Omega
\end{equation*}
set on a Lipschitz bounded domain $\Omega \subset \Rbb^ n$ for a (spatial) dimension $n \in \Nbb$ (typically $n \in \{ 1,2,3\}$)
with Dirichlet boundary conditions $u_{|\partial \Omega} \equiv 0$. 
We assume that the diffusion coefficient has an affine-linear dependence on the parameter $\ybs$, in the sense that there exists $(\psi_j)_{j \in \Nbb}$ such that
\begin{equation*}
a(x;\ybs) = \bar{a}(x) + \sum_{j \in \Nbb}y_j \psi_j(x)
\end{equation*}
for every $x \in \Omega$ and $j \in \Ucal = [-1,1]^\Nbb$. 
The functions $\bar{a}$ and $\psi_j$ are taken in $L^\infty(\Omega)$. 
Here the solution space is given by $\Xcal = \Ycal = H_0^1(\Omega)$ with the usual inner product; whence we consider $f \in \Ycal' = \Xcal' = H^ {-1}(\Omega)$.

\subsubsection{Existence of solutions}
It is now mathematical folklore that solutions to~\eqref{eq:affineFamily} exist whenever the family of operators fulfills a certain uniform ellipticity assumption (UEA). 
For the purpose of our work, we require (and express only) a somewhat stronger version of this assumption, written in a weighted form, with weights $(v_j)_{j \geq 1}$:
\begin{equation}
\label{eq:wuea}
\tag{$wUEA$}
\sum_{j \geq 1} v_j^{(2-p)/p}b_{0,j} \leq \kappa_{\vbf,p} < 1, \quad b_{0,j} := \|A_0^{-1}A_j\|_{\Lcal(\Xcal,\Xcal)}.
\end{equation}

Let us first introduce some properties. Namely, we say that the family of operators $A(\ybs)$ is $\inf-\sup$-stable if there exists a $\mu$ such that 
\begin{equation}
\label{eq:infsupContinuous}
\begin{array}{c}
\inf_{0 \neq v \in \Xcal} \sup_{0 \neq w \in \Ycal} \frac{\left\langle A(\ybs)v,w \right\rangle}{\|v\|_\Xcal\|w\|_\Ycal} \geq \mu > 0, \\
\inf_{0 \neq w \in \Ycal} \sup_{0 \neq v \in \Xcal} \frac{\left\langle A(\ybs)v,w \right\rangle}{\|v\|_\Xcal\|w\|_\Ycal} \geq \mu > 0,
\end{array}
\end{equation}
where we have used the duality bracket notation.

\begin{aprop}
Assume $A_0$ is $\inf-\sup$ stable (i.e.~\eqref{eq:infsupContinuous} holds at least for $\ybs = 0$, with constant $\mu_0 > 0$) and assume that~\eqref{eq:wuea} holds for some \jlb{weights $v_j \geq 1$, $j \geq 1$}. 
Then for any $\ybs \in \Ucal$, the weak solution $u(\ybs)$ exists and is unique and satisfies the uniform estimates
\begin{equation*}
\sup_{\ybs \in \Ucal} \|u(\ybs)\|_\Xcal \leq \frac{1}{\mu_0(1-\kappa_{\vbf,p})}\|f\|_{\Ycal'}.
\end{equation*}
\end{aprop}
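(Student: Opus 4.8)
The plan is to show that for each fixed $\ybs \in \Ucal$ the operator $A(\ybs)$ is a small perturbation of the invertible mean field $A_0$, and to quantify this via a Neumann-series argument in $\Lcal(\Xcal,\Xcal)$. First I would write $A(\ybs) = A_0\bigl(I + \sum_{j \geq 1} y_j A_0^{-1}A_j\bigr)$, which is legitimate since $A_0$ is $\inf$-$\sup$ stable at $\ybs = 0$ and hence boundedly invertible (the two conditions in~\eqref{eq:infsupContinuous} at $\ybs = 0$ give $\|A_0^{-1}\|_{\Lcal(\Ycal',\Xcal)} \leq 1/\mu_0$). Then I would estimate the operator norm of the correction term: since $|y_j| \leq 1$ and $v_j \geq 1$, we have $\sum_{j \geq 1} |y_j|\, b_{0,j} \leq \sum_{j \geq 1} v_j^{(2-p)/p} b_{0,j} \leq \kappa_{\vbf,p} < 1$ by~\eqref{eq:wuea}, using that $v_j^{(2-p)/p} \geq 1$ for $p \in (0,1)$ (so $(2-p)/p > 0$). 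Hence $\bigl\|\sum_{j\geq 1} y_j A_0^{-1}A_j\bigr\|_{\Lcal(\Xcal,\Xcal)} \leq \kappa_{\vbf,p} < 1$.

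With this bound in hand, the Neumann series $\sum_{k \geq 0} \bigl(-\sum_j y_j A_0^{-1}A_j\bigr)^k$ converges in $\Lcal(\Xcal,\Xcal)$ and provides a bounded inverse of $I + \sum_j y_j A_0^{-1}A_j$ with norm at most $1/(1-\kappa_{\vbf,p})$. Composing with $A_0^{-1}$ shows $A(\ybs)$ is invertible from $\Ycal'$ to $\Xcal$, so the weak solution $u(\ybs) = A(\ybs)^{-1}f$ exists and is unique. The norm estimate follows immediately:
\begin{equation*}
\|u(\ybs)\|_\Xcal \leq \Bigl\|\bigl(I + \textstyle\sum_j y_j A_0^{-1}A_j\bigr)^{-1}\Bigr\|_{\Lcal(\Xcal,\Xcal)} \|A_0^{-1}\|_{\Lcal(\Ycal',\Xcal)} \|f\|_{\Ycal'} \leq \frac{1}{\mu_0(1-\kappa_{\vbf,p})}\|f\|_{\Ycal'},
\end{equation*}
and since the right-hand side is independent of $\ybs$, taking the supremum over $\ybs \in \Ucal$ gives the claimed uniform bound.

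I do not expect a serious obstacle here; the argument is the standard Lax–Milgram/Neumann-perturbation proof of the classical UEA result (as in~\cite{cohen2011analytic}), and the only point requiring a little care is verifying that~\eqref{eq:wuea} does dominate the unweighted sum $\sum_j |y_j| b_{0,j}$ — which is exactly where the hypothesis $v_j \geq 1$ together with $p \in (0,1)$ is used, so that each weight $v_j^{(2-p)/p} \geq 1$. A minor alternative, if one prefers to bypass $\inf$-$\sup$ terminology, is to work directly with the bilinear form and invoke Lax–Milgram, using coercivity inherited from $A_0$ plus the perturbation bound; but the Neumann-series route is cleanest and makes the dependence of the constant on $\mu_0$ and $\kappa_{\vbf,p}$ transparent.
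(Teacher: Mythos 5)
Your argument is correct and is essentially the paper's own proof: the same factorization $A(\ybs) = A_0\bigl(I + \sum_{j\geq 1} y_j A_0^{-1}A_j\bigr)$, the same observation that $v_j^{(2-p)/p}\geq 1$ lets \eqref{eq:wuea} dominate $\sum_j |y_j| b_{0,j}$, and the same Neumann-series conclusion. You simply spell out the final norm bound $\|A_0^{-1}\|\leq 1/\mu_0$ and the factor $1/(1-\kappa_{\vbf,p})$ more explicitly than the paper does.
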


\begin{proof}
The $\inf-\sup$ conditions are equivalent to the bounded invertibility of the operator. 
Moreover, since the $\inf-\sup$ conditions hold for the mean field $A_0$, the invertibility of the operator $A(\ybs)$ is done by considering a Neumann perturbation argument; i.e. writing $A(\ybs) = A_0(I + \sum_{j \geq 1}y_jA_0^{-1}A_j)$, we see that 
$$
\sum_{j \geq 1} b_{0,j} \leq \sum_{j \geq 1} b_{0,j}v_j^{(2-p)/p} \leq \kappa_{\vbf,p} < 1.
$$
The result follows. 
\end{proof}

\subsubsection{Main result: Approximation of high-dimensional parametric PDEs}

The problem described above having indeed (a family of) solutions, we may approximate them numerically, uniformly for all parameters.

\begin{atheorem}
\label{thm:EDPrecovery}
Let $A$ be an affine family of operators as defined in~\eqref{eq:affineFamily} and let $u$ be the solution to $A(\ybs)u = f$.
Assume that the $\inf-\sup$ conditions~\eqref{eq:infsupContinuous} and~\eqref{eq:infsupDiscrete} hold for the mean-field $A_0$. 
Let $p \in (0,1)$ and assume that~\eqref{eq:wuea} hold for a given weight sequence $\vbf = (v_j)_{j \in \Nbb}$, with $v_j \geq 1$, for all $j \in \Nbb$. 
Assume moreover that the sequence of operators are compressible, in the sense that 
\begin{equation}
\label{eq:lpSummability}
\sum_{j \in \Nbb} v_j^{2-p}b_{0,j}^p < \infty.
\end{equation}
Define the sequence of weights
\begin{equation}
\label{eq:omegas}
\omega_\nu := 2^{\|\nu\|_0}\vbf^\nu = 2^{\|\nu\|_0/2} \prod_{j: \nu_j \neq 0}v_j^{\nu_j}, \quad \text{for all } \nu \in \Fcal.
\end{equation}

Let $\varepsilon$ be a target accuracy and let $s$ be such that 
\begin{equation*}
2^{1/p-1}\sqrt{5}s^{1/2-1/p}\|u\|_{\Xcal,p} \leq \varepsilon.
\end{equation*}
Let $\Lambda := \{ \nu \in \Fcal: \omega_\nu^2 \leq s/2 \}$ such that $N := |\Lambda| < \infty$.
Let 
\begin{equation*}
m \geq c_0 s\log^3(s)\log(N)
\end{equation*}
and draw $m$ sampling points $\ybs^{(1)}, \cdots, \ybs^{(m)}$ i.i.d. from $\eta$ and let $b^{i} \in \Xcal$ be an approximation of $u(\ybs^{(i)})$ such that $\|b^i - u(\ybs^{(i)})\|_\Xcal \leq \varepsilon$. 
Finally, let $\wu$ be the solution to
\begin{equation}
\label{eq:minProbEDP}
\min_{z \in \Xcal \otimes P_\Lambda} \|z\|_{\Xcal,1}^{(\omega)}, \quad \text{s.t. } \|Az-b\|_{\Xcal,2} \leq 2\sqrt{m}\varepsilon.
\end{equation}
Then the following bounds hold
\begin{align*}
\|u-\wu\|_{L^\infty(\Xcal;\Ucal,\eta)} &\leq C s^{1-1/p}\|u\|_{\Xcal,p}^{(\omega)} + C'\sqrt{s}\varepsilon \\
\|u-\wu\|_{L^2(\Xcal;\Ucal,\eta)} &\leq D s^{1/2-1/p}\|u\|_{\Xcal,p}^{(\omega)} + D' \varepsilon.
\end{align*}

\end{atheorem}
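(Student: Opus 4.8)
The plan is to reduce the infinite-dimensional parametric recovery to the finite sampling result already proved, Theorem~\ref{thm:fctApproxInf}, by passing through a tensorized \Tscheb polynomial chaos expansion of $u$ and truncating it to the index set $\Lambda$. Concretely, I would (i) expand $u(\ybs)=\sum_{\nu\in\Fcal}u_\nu T_\nu(\ybs)$ with $u_\nu\in\Xcal$ and show that this coefficient sequence is \emph{weighted $\ell^p$-summable}, $\|u\|_{\Xcal,p}^{(\omega)}<\infty$, for the weights $\omega_\nu$ of~\eqref{eq:omegas}; (ii) control the truncation error $\|u-u_\Lambda\|$ in the $L^\infty$ and $L^2$ Bochner norms; (iii) check that $u_\Lambda$ is a feasible point for~\eqref{eq:minProbEDP} and invoke Theorem~\ref{thm:fctApproxInf}; and (iv) add the two contributions via the triangle inequality.

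For step (i), the $\inf$--$\sup$ hypotheses on $A_0$ together with~\eqref{eq:wuea} guarantee, by the Neumann perturbation argument already recorded, that $u(\cdot;\ybs)\in\Xcal$ exists, is unique, and is uniformly bounded, and that $\ybs\mapsto u(\cdot;\ybs)$ extends holomorphically in each group of finitely many parameters. From this holomorphy one obtains, by the by-now classical estimates on \Tscheb (equivalently Legendre/Taylor) coefficients of parametric solutions of affine operator equations (cf.~\cite{cohen2011analytic,Rauhut14CSPG}), a bound on each $\|u_\nu\|_\Xcal$ by a product over $j\in\supp\nu$ of geometric factors controlled by $b_{0,j}$ and $v_j$; summing these bounds against $\omega_\nu^{2-p}=2^{(2-p)\|\nu\|_0/2}\prod_{j}v_j^{(2-p)\nu_j}$ and expanding the resulting multinomial series reduces the finiteness of $\|u\|_{\Xcal,p}^{(\omega)}$ precisely to the compressibility hypothesis~\eqref{eq:lpSummability}, the $wUEA$ constant $\kappa_{\vbf,p}<1$ ensuring convergence of the underlying Neumann series. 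I expect this to be the only genuinely hard step; everything else is bookkeeping.

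For steps (ii)--(iii), set $u_\Lambda:=\sum_{\nu\in\Lambda}u_\nu T_\nu\in\Xcal\otimes P_\Lambda$ with $\Lambda=\{\nu\in\Fcal:\omega_\nu^2\le s/2\}$ finite. Since $\Lambda$ contains the block-support of every weighted $(s/2)$-term approximant of $u$, the block vector $u-u_\Lambda$ is dominated by $u-u_S$ for such an $S$, so a weighted Stechkin estimate (the countable analogue of Proposition~\ref{prop:Stechkin}), together with $\|T_\nu\|_\infty=2^{\|\nu\|_0/2}\le\omega_\nu$ from~\eqref{eq:linfTscheb}, gives
\[
\|u-u_\Lambda\|_{L^\infty(\Xcal;\Ucal,\eta)}\le\sum_{\nu\notin\Lambda}\|u_\nu\|_\Xcal\|T_\nu\|_\infty\lesssim s^{1-1/p}\|u\|_{\Xcal,p}^{(\omega)},\qquad
\|u-u_\Lambda\|_{L^2(\Xcal;\Ucal,\eta)}\lesssim s^{1/2-1/p}\|u\|_{\Xcal,p}^{(\omega)}.
\]
The hypothesis on $s$ is calibrated so that these right-hand sides, and hence, by a routine concentration argument at the i.i.d.\ sample points, the sampled truncation error $\big(\sum_i\|u(\ybs^{(i)})-u_\Lambda(\ybs^{(i)})\|_\Xcal^2\big)^{1/2}$, stay at the scale $\sqrt m\,\varepsilon$; since $\|b^{(i)}-u(\ybs^{(i)})\|_\Xcal\le\varepsilon$, this yields $\|Au_\Lambda-b\|_{\Xcal,2}\le 2\sqrt m\,\varepsilon$, so $u_\Lambda$ is feasible for~\eqref{eq:minProbEDP}. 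On $\Lambda$ the restricted weights satisfy $\omega_\nu\ge\|T_\nu\|_\infty$ by construction and $2\|\omega|_\Lambda\|_\infty^2\le s$ by definition of $\Lambda$, while $m\ge c_0 s\log^3(s)\log(N)$ matches the sampling requirement; thus Theorem~\ref{thm:fctApproxInf} applies to $u_\Lambda$ with noise level $2\varepsilon$, giving, with high probability over the samples,
\[
\|u_\Lambda-\wu\|_{L^\infty}\le\|u_\Lambda-\wu\|_{\Xcal,1}^{(\omega)}\le c\,\sigma_s(u_\Lambda)_{\Xcal,1}^{(\omega)}+2d\sqrt s\,\varepsilon,\qquad
\|u_\Lambda-\wu\|_{L^2}\le\frac{c'}{\sqrt s}\sigma_s(u_\Lambda)_{\Xcal,1}^{(\omega)}+2d'\varepsilon,
\]
and Proposition~\ref{prop:Stechkin} (used with $q=p$, $\|\omega|_\Lambda\|_\infty^2\le s/2$ and $\|u_\Lambda\|_{\Xcal,p}^{(\omega)}\le\|u\|_{\Xcal,p}^{(\omega)}$) bounds $\sigma_s(u_\Lambda)_{\Xcal,1}^{(\omega)}\le(s-\|\omega|_\Lambda\|_\infty^2)^{1-1/p}\|u\|_{\Xcal,p}^{(\omega)}\le(s/2)^{1-1/p}\|u\|_{\Xcal,p}^{(\omega)}$.

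Finally, step (iv): adding the previous displays by the triangle inequality and using $\varepsilon\le\sqrt s\,\varepsilon$ and $s^{-1/2}s^{1-1/p}=s^{1/2-1/p}$ yields the asserted bounds $\|u-\wu\|_{L^\infty}\le C s^{1-1/p}\|u\|_{\Xcal,p}^{(\omega)}+C'\sqrt s\,\varepsilon$ and $\|u-\wu\|_{L^2}\le D s^{1/2-1/p}\|u\|_{\Xcal,p}^{(\omega)}+D'\varepsilon$. In summary, the only substantial work is the weighted $\ell^p$-summability of the \Tscheb coefficients in step (i); the remaining steps are a Stechkin tail bound (which is also what dictates the admissible range of $s$), a feasibility check, an application of Theorem~\ref{thm:fctApproxInf}, and a triangle inequality.
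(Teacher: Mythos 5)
Your proposal is correct and follows essentially the same route as the paper: weighted $\ell^p$-summability of the \Tscheb coefficients via the holomorphy/Neumann-series estimates of~\cite{Rauhut14CSPG} (the paper cites this as Theorem~\ref{thm:summability} with $\theta=\sqrt 2$), then truncation to $\Lambda$, a Bernstein-type concentration bound at the sample points to certify feasibility of $u_\Lambda$ for~\eqref{eq:minProbEDP}, an application of the finite-dimensional recovery theorem, Stechkin's bound, and a triangle inequality. The only cosmetic difference is that the paper packages your steps (ii)--(iii) into Theorem~\ref{thm:approxInfDim} and invokes it as a black box, whereas you inline that argument.
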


The goal of the following part of this paper is to verify all the assumptions and conditions mentioned in the previous sections to prove the main theorem, by applying Theorem~\ref{thm:fctApprox} to this particular problem.

\subsection{Truncation to finite dimensional problems}
So far the problem is a continuous, infinite one and we need some ways to discretize and truncate it before handling its resolution numerically. 
This takes two forms: truncating the continuous function space of solution (typically what Finite Elements and similar discretization methods deal with) and truncating the space of polynomial to a finite one. 

As described in the previous section, the basic idea is to decouple the space and parameter variables. 
Hence given a (countable) orthonormal set $(\varphi_j)_{j \geq 1}$ of $\Xcal$ and a (countable) set of tensorized (\Tscheb) polynomials $(T_\nu)_{\nu \in \Fcal}$. 
The solution may then be expressed as 
$$
u(x;\ybs) = \sum_{\nu \in \Fcal} \sum_{j \geq 1}u_\nu^j \varphi_{j}(x) T_\nu(\ybs).
$$
The challenges will be to find finite dimensional spaces $\Xcal^h = \Xcal^\Jcal \subset \Xcal$ and $\Lambda \subset \Fcal$ such that the approximation
$$
\wu(x;\ybs) = \sum_{\nu \in \Lambda} \sum_{j \in \Jcal} \wu_\nu^j \varphi_j^h(x) T_\nu(\ybs)
$$
is close enough to the original sought after function. 

\subsubsection{Petrov-Galerkin discretization}

Throughout the rest of this note we assume given \emph{scale of smoothness spaces} $\{\Xcal_t\}_{0 \leq t \leq \bar{t}}$ such that 
\begin{equation}
\label{eq:scaleofsmmoothness}
\Xcal := \Xcal_0 \varsupsetneq \Xcal_1 \varsupsetneq \cdots \varsupsetneq \Xcal_{\bar{t}},
\end{equation}
where the spaces are defined by interpolation for the non-integer indices. 

We also consider at our disposal a one-parameter family of finite-dimensional spaces $\{\Xcal^h\}_{h > 0}$ with $N_h := \operatorname{dim}(\Xcal^h) < \infty$.
We assume that the spaces $\{\Xcal^h\}_h$ are dense in $\Xcal$ as $h \to 0$.

\begin{assumption}[Approximation property of the discrete spaces]
\label{assump:AP}
We assume the that discretization spaces have the \emph{approximation property} in the smoothness scale~\eqref{eq:scaleofsmmoothness}: for $0 < t \leq \bar{t}$, there exists a constant $C_t$, such that for all $0 < h \leq 1$, and all $u \in \Xcal_t$, it holds
\begin{equation}
\label{eq:APapproximation}
\inf_{u^h \in \Xcal^h}\|u-u^h\|_\Xcal \leq C_t h^t\|u\|_{\Xcal_t}.
\end{equation}
\end{assumption}

Namely, we say that the family of operators $A(\ybs)$ is $\inf-\sup$-stable in the discretization spaces if there exist constants $\mu_d> 0$ and $h_0 > 0$ such that for all $0 < h \leq h_0$
\begin{equation}
\label{eq:infsupDiscrete}
\begin{array}{c}
\inf_{0 \neq v \in \Xcal^h} \sup_{0 \neq w \in \Ycal^h} \frac{\left\langle A(\ybs)v,w \right\rangle}{\|v\|_\Xcal\|w\|_\Ycal} \geq \mu_d, \\
\inf_{0 \neq w \in \Ycal^h} \sup_{0 \neq v \in \Xcal^h} \frac{\left\langle A(\ybs)v,w \right\rangle}{\|v\|_\Xcal\|w\|_\Ycal} \geq \mu_d.
\end{array}
\end{equation}

We recall the following classical result (see for example~\cite[Chapter 6]{Brezzi13book}).
\begin{aprop}
\label{prop:linGh}
Let $\Xcal^h$ and $\Ycal^h$ be discretization spaces for the PG method, 
such that the uniform discrete $\inf-\sup$ conditions~\eqref{eq:infsupDiscrete}
are fulfilled and assume that the bilinear operator 
$\Xcal \times \Ycal \ni (u,w) \mapsto \langle A(\ybs)u,w\rangle$ is continuous,
uniformly with respect to $\ybs\in U$.

Then the PG projections $G^h(\ybs): \Xcal \to \Xcal^h$ 
are well-defined linear operators, 
whose norms are uniformly bounded with respect to the 
parameters $\ybs$ and $h$, i.e.,
\begin{align}
\label{eq:priorBound}
\sup_{\ybs\in U} \sup_{h>0}
\|u^h(\ybs)\|_{\Xcal} &\leq \frac{1}{\mu_d}\|f\|_{\Ycal'}, 
\\
\label{eq:normProjection}
\sup_{\ybs \in U} \sup_{h > 0} \|G^h(\ybs)\|_{\Lcal(\Xcal)} &\leq \frac{C}{\mu_d}
\end{align}
The Galerkin projections are uniformly quasi-optimal:
for every $\ybs\in \Ucal$ we have the a-priori error bound
\begin{equation}
\label{eq:quasiOptimal}
\|u(\ybs) - u^h(\ybs)\|_\Xcal
\leq 
\left(1+\frac{C}{\mu_d}\right)\operatorname{inf}_{v^h \in \Xcal^h} \|u(\ybs) - v^h\|_\Xcal
\;.
\end{equation}
\end{aprop}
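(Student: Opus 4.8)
The plan is to reduce everything to the finite-dimensional Banach--Ne\v{c}as--Babu\v{s}ka lemma applied to the discrete operator associated to $A(\ybs)$, and then to obtain the three displayed bounds as essentially mechanical consequences. This is the classical Ce\'a/Babu\v{s}ka argument for Petrov--Galerkin schemes, so alternatively one may simply invoke~\cite[Chapter 6]{Brezzi13book}; below I sketch the argument for completeness.

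First I would fix $\ybs \in \Ucal$ and introduce the discrete operator $B^h(\ybs)\colon \Xcal^h \to (\Ycal^h)'$ defined by $\langle B^h(\ybs)v, w\rangle = \langle A(\ybs)v, w\rangle$ for $v \in \Xcal^h$, $w \in \Ycal^h$. The first line of~\eqref{eq:infsupDiscrete} says exactly that $B^h(\ybs)$ is bounded below by $\mu_d$, hence injective; the second line says its transpose is bounded below, hence injective as well. Since $\Xcal^h$ and $\Ycal^h$ are finite-dimensional, injectivity in both directions forces $\dim\Xcal^h = \dim\Ycal^h$ and makes $B^h(\ybs)$ a bijection with $\|B^h(\ybs)^{-1}\|_{\Lcal((\Ycal^h)',\Xcal^h)} \leq 1/\mu_d$. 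Consequently, for any $g \in (\Ycal^h)'$ the problem ``find $u^h \in \Xcal^h$ with $\langle A(\ybs)u^h, w^h\rangle = g(w^h)$ for all $w^h \in \Ycal^h$'' has a unique solution: taking $g = f|_{\Ycal^h}$ defines $u^h(\ybs)$, and taking $g = \langle A(\ybs)v, \cdot\rangle$ defines $G^h(\ybs)v$ for arbitrary $v \in \Xcal$. Linearity of $G^h(\ybs)$ then follows from linearity of $A(\ybs)$ together with uniqueness of the discrete solution.

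Next I would read off the bounds. For~\eqref{eq:priorBound}, the discrete inf-sup gives $\mu_d\|u^h(\ybs)\|_\Xcal \leq \sup_{0\neq w^h \in \Ycal^h} \langle A(\ybs)u^h(\ybs), w^h\rangle/\|w^h\|_\Ycal = \sup_{w^h}\langle f, w^h\rangle/\|w^h\|_\Ycal \leq \|f\|_{\Ycal'}$, uniformly in $\ybs$ and $h$. For~\eqref{eq:normProjection}, the same computation with right-hand side $\langle A(\ybs)v, \cdot\rangle$ gives $\mu_d\|G^h(\ybs)v\|_\Xcal \leq \sup_{w^h}\langle A(\ybs)v, w^h\rangle/\|w^h\|_\Ycal \leq \|A(\ybs)v\|_{\Ycal'} \leq C\|v\|_\Xcal$, where $C$ is the uniform continuity constant of the bilinear form; dividing by $\|v\|_\Xcal$ and taking the supremum over $v$ yields $\|G^h(\ybs)\|_{\Lcal(\Xcal)} \leq C/\mu_d$. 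For the quasi-optimality~\eqref{eq:quasiOptimal}, I would first observe that $G^h(\ybs)$ restricts to the identity on $\Xcal^h$ (any $v^h \in \Xcal^h$ trivially solves the equation defining $G^h(\ybs)v^h$, so uniqueness gives $G^h(\ybs)v^h = v^h$) and that $u^h(\ybs) = G^h(\ybs)u(\ybs)$, since $A(\ybs)u(\ybs) = f$ weakly. Hence for every $v^h \in \Xcal^h$ one has $u(\ybs) - u^h(\ybs) = (I - G^h(\ybs))(u(\ybs) - v^h)$, so $\|u(\ybs) - u^h(\ybs)\|_\Xcal \leq (1 + \|G^h(\ybs)\|_{\Lcal(\Xcal)})\|u(\ybs) - v^h\|_\Xcal \leq (1 + C/\mu_d)\|u(\ybs) - v^h\|_\Xcal$; taking the infimum over $v^h \in \Xcal^h$ finishes the argument.

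The only genuinely delicate point is the first one: extracting bijectivity of $B^h(\ybs)$, with a norm bound on its inverse, from the two inf-sup inequalities. The subtlety is that one needs \emph{both} inequalities of~\eqref{eq:infsupDiscrete} (together they enforce $\dim\Xcal^h = \dim\Ycal^h$ in the finite-dimensional setting) — a single inf-sup condition only yields injectivity and an a priori estimate, not solvability — and one must carry the fact that $\mu_d$ and $C$ are uniform in $\ybs$ and $h$ through every estimate, which is precisely what the hypotheses supply. Everything after that is routine.
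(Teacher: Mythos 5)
Your proof is correct, and in fact the paper gives no proof of this proposition at all — it is merely recalled as a classical result with a pointer to~\cite[Chapter 6]{Brezzi13book} — so your standard Banach--Ne\v{c}as--Babu\v{s}ka/C\'ea argument (discrete bijectivity with $\|B^h(\ybs)^{-1}\|\leq 1/\mu_d$ from the two inf-sup conditions, then the a priori bound, the bound on $G^h(\ybs)$ via the continuity constant, and quasi-optimality from $u(\ybs)-u^h(\ybs)=(I-G^h(\ybs))(u(\ybs)-v^h)$) is exactly the argument the citation stands in for. The only cosmetic remark is that the identity $\|I-G^h(\ybs)\|_{\Lcal(\Xcal)}=\|G^h(\ybs)\|_{\Lcal(\Xcal)}$ for a nontrivial projection on a Hilbert space would sharpen the quasi-optimality constant from $1+C/\mu_d$ to $C/\mu_d$, but the stated bound is precisely what your projection argument delivers, so nothing is missing.
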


As a consequence, combining Eq.~\eqref{eq:quasiOptimal} with Eq.~\eqref{eq:APapproximation} ensures us the following approximation for every $u \in \Xcal_t$: 
\begin{equation}
\label{eq:spatialApprox}
\|u - G_hu\|_\Xcal \leq C'_{t} h^t \|u\|_{\Xcal_t}. 
\end{equation}
An important point to notice is that~\eqref{eq:spatialApprox} is valid uniformly for all vectors $\ybs \in \Ucal$.
The goal of the compressed sensing approach is to compute the finite-dimensional approximation $G_hu$, hereby breaking (one part of) the infinite dimensionality of the problem.

\subsubsection{Parameter truncation}
Following~\eqref{eq:spatialApprox}, we can discretize the infinite dimensional minimization to a manageable, finite dimensional one.
Looking at the parameter space, we still face two major problems: the first one is that the number of parameters may be infinite (but countable) and the second is that the set of multi-indices (see~\eqref{eq:fcal}) that we use is also infinite (countable) even if the number of parameters is finite. 

Wlog we can assume that the $A_j$ are ordered in decreasing order of their energies, i.e. $b_{0,j} \geq b_{0,j+1}$, for all $j$. 
We assume some decay of the \emph{energy} of the operator $A(\ybs)$ 
such that for any $\varepsilon > 0$, 
there exists $\tau := \tau(\varepsilon,A)$ with 
\begin{equation}
\label{eq:decayEnergyOperator}
\|A(\ybs) - A^{(\tau)}(\ybs)\|_{\Lcal(\Xcal, \Ycal')} \leq \varepsilon \mu, \quad \forall \ybs \in \Ucal,
\end{equation}
where $\mu$ is the constant appearing in the $\inf-\sup$ conditions and 
we define the weak solutions of the truncated version of Eq.~\eqref{eq:affineFamily}:
\begin{equation}
\label{eq:weakTruncated}
\text{Find $u^{(\tau)} \in \Xcal$, such that } \langle A^{(\tau)}(\ybs)u^{(\tau)}, v \rangle = \langle f, v \rangle \quad \mbox{ for all } v \in \Ycal,
\end{equation}
with the operator $A^{(\tau)}(\ybs)$ defined for a finite $\tau \in \Nbb$ as $A(y_1, y_2, \cdots, y_\tau, 0, 0, \cdots)$.

In this case, the following result, taken from \cite{Bouchot16MLCSPG}, generalizing results in~\cite{Dick13QMCPG} holds.
\begin{aprop}
\label{prop:truncation}
Assume the family of operators $A$ satisfy the $\inf-\sup$ conditions
and the decay property~\eqref{eq:decayEnergyOperator}. 
Then for any accuracy parameter $\varepsilon$, 
there exists a truncation parameter $\tau \in \Nbb$ such that the solutions 
to the truncated problem~\eqref{eq:weakTruncated} and 
to the original problem~\eqref{eq:affineFamily} are close to each other in the following sense 
\begin{align}
\label{eq:approxDecayEnergyOperator}
\|u^{(\tau)}(\ybs) - u(\ybs)\|_{\Xcal} &\leq \frac{C \varepsilon}{\mu}\|f\|_{\Ycal'}, 
\end{align}
where $u^{(\tau)}(\ybs)$ is the solution of the truncated problem~\eqref{eq:weakTruncated}.
\end{aprop}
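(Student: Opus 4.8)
The plan is to run a standard perturbation argument of second Strang type, using the uniform $\inf-\sup$ stability of the full operator $A(\ybs)$ together with the tail-smallness supplied by~\eqref{eq:decayEnergyOperator}.

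First I would fix the accuracy $\varepsilon \in (0,1)$ and pick $\tau = \tau(\varepsilon, A)$ as in~\eqref{eq:decayEnergyOperator}, so that $\|A(\ybs) - A^{(\tau)}(\ybs)\|_{\Lcal(\Xcal, \Ycal')} \leq \varepsilon \mu$ for every $\ybs \in \Ucal$. Since the $\inf-\sup$ conditions~\eqref{eq:infsupContinuous} are equivalent to bounded invertibility of $A(\ybs)$ with $\|A(\ybs)^{-1}\|_{\Lcal(\Ycal', \Xcal)} \leq 1/\mu$, writing $A^{(\tau)}(\ybs) = A(\ybs)\bigl(I - A(\ybs)^{-1}(A(\ybs) - A^{(\tau)}(\ybs))\bigr)$ and summing a Neumann series shows that $A^{(\tau)}(\ybs)$ is also boundedly invertible, uniformly in $\ybs$, with $\|(A^{(\tau)}(\ybs))^{-1}\|_{\Lcal(\Ycal', \Xcal)} \leq \frac{1}{\mu(1-\varepsilon)}$. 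Consequently the truncated weak problem~\eqref{eq:weakTruncated} is well posed and its solution obeys the uniform a priori bound $\|u^{(\tau)}(\ybs)\|_\Xcal \leq \frac{1}{\mu(1-\varepsilon)}\|f\|_{\Ycal'}$.

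Next, subtracting the variational identities $\langle A(\ybs)u(\ybs), v\rangle = \langle f, v\rangle$ and $\langle A^{(\tau)}(\ybs)u^{(\tau)}(\ybs), v\rangle = \langle f, v\rangle$, which share the same right-hand side, gives the error equation
\begin{equation*}
\langle A(\ybs)(u(\ybs) - u^{(\tau)}(\ybs)), v\rangle = \langle (A^{(\tau)}(\ybs) - A(\ybs))u^{(\tau)}(\ybs), v\rangle \quad \text{for all } v \in \Ycal .
\end{equation*}
Bounding the left-hand side from below by the $\inf-\sup$ constant of $A(\ybs)$, and the right-hand side from above via~\eqref{eq:decayEnergyOperator} and the a priori bound just derived, yields
\begin{equation*}
\|u(\ybs) - u^{(\tau)}(\ybs)\|_\Xcal \leq \frac{1}{\mu}\|(A^{(\tau)}(\ybs) - A(\ybs))u^{(\tau)}(\ybs)\|_{\Ycal'} \leq \frac{1}{\mu}\cdot \varepsilon\mu \cdot \frac{1}{\mu(1-\varepsilon)}\|f\|_{\Ycal'} = \frac{\varepsilon}{\mu(1-\varepsilon)}\|f\|_{\Ycal'},
\end{equation*}
which is~\eqref{eq:approxDecayEnergyOperator} with $C = 1/(1-\varepsilon)$; applying the above with $\varepsilon/2$ in place of $\varepsilon$ from the start makes $C \leq 2$ and absolute.

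I expect the only genuinely non-routine point to be the well-posedness of the truncated problem: one must verify that $A^{(\tau)}(\ybs)$ inherits $\inf-\sup$ stability \emph{uniformly} in $\ybs$, which is exactly the Neumann-perturbation step above and relies crucially on the $\ybs$-independence of the bound in~\eqref{eq:decayEnergyOperator}. Everything after that is the classical Strang-lemma manipulation; the only bookkeeping subtlety is that the same $\varepsilon$ (up to the harmless factor above) must appear in hypothesis and conclusion, which is automatic since $\tau$ is chosen as a function of $\varepsilon$.
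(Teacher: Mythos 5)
Your argument is correct and complete: the choice of $\tau$ via~\eqref{eq:decayEnergyOperator}, the Neumann-series step establishing uniform bounded invertibility of $A^{(\tau)}(\ybs)$ with $\|(A^{(\tau)}(\ybs))^{-1}\|_{\Lcal(\Ycal',\Xcal)} \leq \frac{1}{\mu(1-\varepsilon)}$, and the Strang-type error identity combined with the $\inf$-$\sup$ lower bound for $A(\ybs)$ all check out, and the resulting constant $C = 1/(1-\varepsilon)$ (made absolute by rescaling $\varepsilon$) matches the claimed bound. Note that the paper itself does not prove this proposition but imports it from an external reference, so your write-up is in fact a self-contained justification of the standard perturbation argument that the citation is presumably relying on; the one point worth making explicit, which you correctly flag, is that well-posedness of the truncated problem~\eqref{eq:weakTruncated} is not assumed but must be derived, and your uniform-in-$\ybs$ Neumann perturbation does exactly that.
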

A sufficient condition for~\eqref{eq:decayEnergyOperator} to hold is to have $(b_{0,j})_{j \geq 1} \in \ell^1(\Nbb)$.
A consequence of this result is that it is possible to sample only in the finite dimensional spaces $[-1,1]^\tau$ instead of in the infinite space $\Ucal$. 
One last step towards lowering everything to a finite dimensional problem is to truncate the polynomial space used for the approximation.

To this end we need an infinite dimensional version of Theorem~\ref{thm:fctApprox}.
This theorem being of interest on its own, we have written it in a very general form, independently from the parametric PDE application that we have in mind. 
\begin{atheorem}
\label{thm:approxInfDim}
Suppose $(T_\nu)_{\nu \in \Fcal}$ is a countable orthonormal system, indexed by $\Fcal$, with orthogonalization measure $\eta$. 
Assume given some weights $(\omega_\nu)_{\nu \in \Fcal}$ such that $\omega_\nu \geq \|T_\nu\|_\infty$ for all $\nu \in \Fcal$. 
For a parameter $s \geq 1$ define $\Lambda := \{\nu \in \Fcal: \omega_\nu^2 \leq s/2\}$ and assume $N := |\Lambda| < \infty$
and let 
\begin{equation}
\label{eq:m}
m \geq c_0 s\log^3(s)\log(N).
\end{equation}
For a function $f \in \Xcal \otimes P_\Gamma$ with $f(x;\ybs) = \sum_{\nu \in \Fcal} f_\nu(x)T_\nu(\ybs)$ and $f_\nu \in \Xcal$, 
draw $m$ samples $\ybs^{(1)}, \cdots, \ybs^{(m)}$ i.i.d. from $\eta$ and let $A := (T_\nu(\ybs^{(i)}))_{1 \leq i \leq m; \nu \in \Lambda}$ be the sampling matrix. 
Assume that there exists a $0 < p < 1$ such that $\|f\|_{\Xcal,p}^{(\omega)} < \infty$.
Let $\varepsilon > 0$ such that 
\begin{equation}
\label{eq:eps4Approx}
\varepsilon \geq 2^{1/p-1}\sqrt{5}s^{1/2-1/p}\|f-f_{\Lambda}\|_{\Xcal,p}^{(\omega)}.
\end{equation} 
Let $b^{i}$ be an approximation of $f(\ybs^{(i)})$ such that $\|b^i - f(\ybs^{(i)})\|_\Xcal \leq \varepsilon$ and 
let $\wz$ be the solution of 
\begin{equation*}
\min\|z\|_{\Xcal,1}^{(\omega)}, \quad \text{s.t. } \|Az-b\|_{\Xcal,2} \leq 2\sqrt{m}\varepsilon.
\end{equation*}
The following bounds hold for the function $\wf(x;\ybs) := \sum_{\nu \in \Lambda} T_\nu(\ybs) \wz_{\nu}(x)$
\begin{align*}
\|f-\wf\|_{L^\infty} \leq \|f-\wf\|_{2,1}^{(\omega)} &\leq  c\sigma_{s/2}(f)_{\Xcal,1}^{(\omega)} + d \sqrt{s}\varepsilon,\\
\|f-\wf\|_{L^2} = &\leq \frac{c'}{\sqrt{s}}\sigma_{s/2}(f)_{2,1}^{(\omega)} + d'\varepsilon.
\end{align*}
with $c,d$ and $c',d'$ universal constants. 
 
\end{atheorem}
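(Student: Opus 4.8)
The plan is to reduce this infinite–dimensional statement to the finite–dimensional recovery machinery of Section~\ref{sec:recovery}, applied on the truncated index set $\Lambda$, treating the discarded polynomial tail $f-f_\Lambda$ as extra measurement noise, and then to pay for that truncation through the calibrated hypothesis~\eqref{eq:eps4Approx} on $\varepsilon$. First, with $m$ as in~\eqref{eq:m}, $N=|\Lambda|$, and (for $\nu\in\Lambda$) $\omega_\nu^2\le s/2$ so that $\|\omega|_\Lambda\|_\infty^2\le s/2\le s$, Theorem~\ref{thm:wbripBOS} (equivalently Theorem~\ref{thm:equivalenceFrobTrad} combined with the weighted RIP for bounded orthonormal systems) guarantees, with probability at least $1-\gamma$ and after absorbing harmless constant factors into $c_0$, that the normalized matrix $\widetilde A:=A/\sqrt m$ satisfies the WBRIP of order $2s$ with constant $\delta_{2s}$ obeying~\eqref{eq:condDelta2s}, for the block structure indexed by $\nu\in\Lambda$ with weights $\omega$. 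I work on this event; by Theorem~\ref{thm:RIP-NSP} the $\ell^2_\omega$-BRNSP holds, hence Theorem~\ref{thm:recoveryRIP} / Corollary~\ref{cor:nspBPDN} is available at sparsity level $s$.

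Next I identify the objects. Let $F_\Lambda:=(f_\nu)_{\nu\in\Lambda}$ be the coefficient vector of $f_\Lambda=\sum_{\nu\in\Lambda}f_\nu T_\nu$, so that $AF_\Lambda=(f_\Lambda(\ybs^{(i)}))_{1\le i\le m}$. Since the constraint $\|Az-b\|_{\Xcal,2}\le 2\sqrt m\varepsilon$ is, after dividing by $\sqrt m$, the same as $\|\widetilde Az-b/\sqrt m\|_{\Xcal,2}\le 2\varepsilon$, the vector $F_\Lambda$ plays the role of the signal, $b/\sqrt m$ the measurements, and $\widetilde AF_\Lambda-b/\sqrt m=\tfrac1{\sqrt m}\big((f_\Lambda(\ybs^{(i)}))_i-b\big)$ the noise. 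Splitting through $f(\ybs^{(i)})$ and using $\|b^{(i)}-f(\ybs^{(i)})\|_\Xcal\le\varepsilon$ gives
\begin{equation*}
\big\|\widetilde AF_\Lambda-b/\sqrt m\big\|_{\Xcal,2}\le\varepsilon+\frac{1}{\sqrt m}\Big(\sum_{i=1}^m\big\|(f-f_\Lambda)(\ybs^{(i)})\big\|_\Xcal^2\Big)^{1/2}.
\end{equation*}
The crux is to bound the polynomial-tail-at-samples term by $\sqrt m\,\varepsilon$. Because $\nu\notin\Lambda$ forces $\omega_\nu^2>s/2$, the same Hölder computation underlying Proposition~\ref{prop:Stechkin} (now with outer exponent $2$, inner exponent $p$) yields $\|f-f_\Lambda\|_{\Xcal,2}\le(s/2)^{1/2-1/p}\|f-f_\Lambda\|_{\Xcal,p}^{(\omega)}=2^{1/p-1/2}s^{1/2-1/p}\|f-f_\Lambda\|_{\Xcal,p}^{(\omega)}$; combined with orthonormality of $(T_\nu)$, which gives $\mathbb{E}\|(f-f_\Lambda)(\ybs)\|_\Xcal^2=\|f-f_\Lambda\|_{\Xcal,2}^2$, and with the choice~\eqref{eq:eps4Approx} of $\varepsilon$ — whose constants $2^{1/p-1}$ and $\sqrt5$ are exactly what is needed to absorb the resulting concentration factor on the (possibly slightly enlarged) event — one obtains $\big(\sum_i\|(f-f_\Lambda)(\ybs^{(i)})\|_\Xcal^2\big)^{1/2}\le\sqrt m\,\varepsilon$. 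Hence $\|\widetilde AF_\Lambda-b/\sqrt m\|_{\Xcal,2}\le2\varepsilon$, i.e. $F_\Lambda$ is feasible for the minimization, and the effective noise level in Theorem~\ref{thm:recoveryRIP} is $\eta=2\varepsilon$.

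Applying Theorem~\ref{thm:recoveryRIP} at level $s$ to $\widetilde A$, $F_\Lambda$ and $\eta=2\varepsilon$ gives $\|F_\Lambda-\wz\|_{\Xcal,1}^{(\omega)}\le c\,\sigma_s(F_\Lambda)_{\Xcal,1}^{(\omega)}+2d\sqrt s\,\varepsilon$ and $\|F_\Lambda-\wz\|_{\Xcal,2}\le\tfrac{c}{\sqrt s}\sigma_s(F_\Lambda)_{\Xcal,1}^{(\omega)}+2d\,\varepsilon$. To pass back to functions, the estimates preceding Theorem~\ref{thm:fctApprox} (namely~\eqref{eq:approxBochnerInf} and~\eqref{eq:approxBochner2}) give
\begin{align*}
\|f-\wf\|_{L^\infty}&\le\|F_\Lambda-\wz\|_{\Xcal,1}^{(\omega)}+\|f-f_\Lambda\|_{\Xcal,1}^{(\omega)},\\
\|f-\wf\|_{L^2}^2&=\|F_\Lambda-\wz\|_{\Xcal,2}^2+\|f-f_\Lambda\|_{\Xcal,2}^2.
\end{align*}
Now every weighted $s/2$-sparse block set lies in $\Lambda$ (if $\nu\notin\Lambda$ then $\omega_\nu^2>s/2\ge$ its own weighted size, a contradiction), whence $\sigma_s(F_\Lambda)_{\Xcal,1}^{(\omega)}\le\sigma_{s/2}(F_\Lambda)_{\Xcal,1}^{(\omega)}\le\sigma_{s/2}(f)_{\Xcal,1}^{(\omega)}$ and also $\|f-f_\Lambda\|_{\Xcal,1}^{(\omega)}\le\sigma_{s/2}(f)_{\Xcal,1}^{(\omega)}$; moreover the $\ell^2_\omega$-bound above together with~\eqref{eq:eps4Approx} gives $\|f-f_\Lambda\|_{\Xcal,2}\le\varepsilon$. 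Collecting terms produces the two displayed bounds with fresh universal constants $c,d$ and $c',d'$.

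The main obstacle is the middle step: controlling the polynomial tail evaluated at the random sample points and converting it into the clean feasibility constraint $\le2\sqrt m\,\varepsilon$. Everything else is bookkeeping with the weighted mixed norms; the delicate point is precisely that $\|\omega\|_\infty=\infty$ over all of $\Fcal$, so Proposition~\ref{prop:Stechkin} cannot be invoked globally and one must instead exploit the lower bound $\omega_\nu^2>s/2$ on $\Lambda^c$ together with the assumed $\ell^p_\omega$-summability of $f$, and then match the (small) probabilistic loss against the constants $2^{1/p-1}$ and $\sqrt5$ hard-wired into~\eqref{eq:eps4Approx}.
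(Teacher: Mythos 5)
Your overall strategy coincides with the paper's: split $f=f_\Lambda+f_R$ with $R=\Fcal\setminus\Lambda$, treat the tail evaluated at the sample points as additional measurement noise, show that $f_\Lambda$ (equivalently its coefficient vector $F_\Lambda$) is feasible for the constraint $\|Az-b\|_{\Xcal,2}\le 2\sqrt{m}\,\varepsilon$, and then invoke the finite-dimensional recovery machinery (the paper calls Theorem~\ref{thm:fctApproxInf}; you unwind it through Theorems~\ref{thm:wbripBOS}, \ref{thm:RIP-NSP} and \ref{thm:recoveryRIP}, which amounts to the same thing). The deterministic bookkeeping --- the bound $\|f_R\|_{\Xcal,2}\le(s/2)^{1/2-1/p}\|f_R\|_{\Xcal,p}^{(\omega)}$ obtained from $\omega_\nu^2>s/2$ on $R$, the observation that every weighted $s/2$-sparse block support lies in $\Lambda$ so that $\sigma_s(F_\Lambda)_{\Xcal,1}^{(\omega)}$ and $\|f_R\|_{\Xcal,1}^{(\omega)}$ are both controlled by $\sigma_{s/2}(f)_{\Xcal,1}^{(\omega)}$, and the passage back to Bochner norms via \eqref{eq:approxBochnerInf} and \eqref{eq:approxBochner2} --- is correct and matches the paper.

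The genuine gap is the one step you yourself single out as the crux: you assert that the constants $2^{1/p-1}$ and $\sqrt{5}$ in \eqref{eq:eps4Approx} ``are exactly what is needed to absorb the resulting concentration factor'' without exhibiting any concentration inequality. The identity $\Ebb\|f_R(\ybs)\|_\Xcal^2=\|f_R\|_{\Xcal,2}^2$ controls only the \emph{expectation} of $\sum_{\ell}\|f_R(\ybs^{(\ell)})\|_\Xcal^2$; to convert this into a high-probability bound of the form $\sum_{\ell}\|f_R(\ybs^{(\ell)})\|_\Xcal^2\le\frac{5m}{s}\bigl(\|f_R\|_{\Xcal,1}^{(\omega)}\bigr)^2\le m\varepsilon^2$ one needs a deviation inequality, and for that one needs an almost-sure bound on the summands. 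The paper supplies it: since $\omega_\nu\ge\|T_\nu\|_\infty$, one has $\|f_R(\ybs)\|_\Xcal\le\sum_{\nu\in R}\|T_\nu\|_\infty\|f_\nu\|_\Xcal\le\|f_R\|_{\Xcal,1}^{(\omega)}$ uniformly in $\ybs$, whence the centered variables $z_\ell=\|f_R(\ybs^{(\ell)})\|_\Xcal^2-\Ebb\|f_R(\ybs^{(\ell)})\|_\Xcal^2$ are bounded with variance at most $\frac{2}{s}\bigl(\|f_R\|_{\Xcal,1}^{(\omega)}\bigr)^4$, and Bernstein's inequality with deviation $t=\frac{3}{s}\bigl(\|f_R\|_{\Xcal,1}^{(\omega)}\bigr)^2$ gives the required bound with probability at least $1-e^{-3m/(2s)}$; this is precisely where the $\sqrt{5}=\sqrt{2+3}$ in \eqref{eq:eps4Approx} and the margin built into \eqref{eq:m} come from. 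The uniform bound (hence the hypothesis $\omega_\nu\ge\|T_\nu\|_\infty$) appears nowhere in your argument, and without it the feasibility of $F_\Lambda$ --- on which everything downstream rests --- is not established.
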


\begin{proof}
This results is obtained by modifying the proof of~\cite[Theorem 1.2]{Rauhut13wCS} to the joint sparsity structure.
The whole idea is to split the infinite extension into an \emph{interesting} part and a \emph{residual}; \jlb{this corresponds to a decomposition of the function $f = \sum_{\nu \in \Fcal} f_\nu = \sum_{\nu \in \Lambda}f_\nu + \sum_{\nu \in R}f_\nu =: f_\Lambda + f_R$, where we define the residual set as $R = \Fcal \backslash \Lambda$.}
Our goal is to show that the residual is small enough, given the number of samples. 

Let $\ybs^{(1)}, \cdots, \ybs^{(m)}$ be $m$ samples drawn i.i.d. from the orthogonalization measure $\eta$. 
Following the calculations leading to~\eqref{eq:approxBochner2}, we obtain 
$$
\Ebb\left[ \|f_R(\ybs^{(i)})\|_{\Xcal}^2 \right] = \int \limits_{\ybs \in \Ucal} \|{f_R}\|_{\Xcal}^2\dif{\eta}(\ybs) = \|f_R\|_{\Xcal,2}^2 = \|F_R\|_{2,2}^2,
$$
where $F_R \jlb{:= (f_\nu^j)_{\nu \in R; j \in \Jcal}}$ denotes the infinite matrix containing the coefficients $f_\nu$ \jlb{($\nu \in R$)} expanded on the basis of $\Xcal$.

Furthermore, by definition of the sets $\Lambda$ and $R$, we have $\omega_\nu^2 > s/2$ for all $\nu \in R$ hence 
$$
\|f_R\|_{\Xcal,2}^2 \leq \frac{2}{s}\sum_{\nu \in R}\omega_\nu^2\|f_\nu\|_{\Xcal}^2 \leq \frac{2}{s} \left(\sum_{\nu \in R} \omega_\nu \|f_\nu\|_{\Xcal} \right)^2 = \frac{2}{s}{\|f_R\|_{2,1}^{(\omega)}}^2.
$$

Therefore the random variable $z_\ell := \|f_R(\ybs^{(\ell)})\|_{\Xcal}^2 - \Ebb\left[\|f_R(\ybs^{(\ell)})\|_{\Xcal}^2\right]$ is a zero mean random variable with variance 
$$
\Ebb\left[ z_\ell^2 \right] = \Ebb \left[ \left(\|f_R(\ybs^{(\ell)})\|_{\Xcal}^2 - \Ebb\left[\|f_R(\ybs^{(\ell)})\|_{\Xcal}^2\right]\right)^2 \right] \leq \Ebb\left[ \|f_R(\ybs^{(\ell)})\|_{\Xcal}^4 \right].
$$
Moreover, for any $\ybs \in \Ucal$, we have $\|f_R(\ybs)\|_{\Xcal} = \|\sum_{\nu \in R} f_\nu T_\nu(\ybs)\|_{\Xcal} \leq \sum_{\nu \in R}\|T_\nu\|_\infty\|f_\nu\|_{\Xcal} \leq \|f_R\|_{\Xcal,1}^{(\omega)}$, we obtain 
$$
\Ebb\left[ z_\ell^2 \right]  \leq {\|f_R\|_{\Xcal,1}^{(\omega)}}^2\Ebb\left[ \|f_R(\ybs^{(\ell)})\|_{\Xcal}^2 \right] \leq \frac{2}{s}{\|f_R\|_{\Xcal,1}^{(\omega)}}^4.
$$
Note that this last quantity is bounded by our assumption. 

Finally, applying Bernstein's inequality, for all $t > 0$ 
$$
\Pbb\left( \left| \frac{1}{m}\sum_{\ell=1}^m  \left\|f_R\left(\ybs^{(\ell)}\right)\right\|_{\Xcal}^2 - \|f_R\|_{\Xcal,2}^2 \right| \geq t \right) \leq \exp\left( -\frac{mt^2/2}{2{\|f_R\|_{\Xcal,1}^{\omega}}^4/s + t {\|f_R\|_{\Xcal,1}^{(\omega)}}^2/3} \right).
$$
Setting $t = \frac{3}{s}{\|f_R\|_{\Xcal,1}^{(\omega)}}^2$ gives 
$$
\Pbb\left( \left| \frac{1}{m}\sum_{\ell=1}^m  \left\|f_R\left(\ybs^{(\ell)}\right)\right\|_{\Xcal}^2 - \|f_R\|_{\Xcal,2}^2 \right| \geq \frac{3}{s}{\|f_R\|_{\Xcal,1}^{(\omega)}}^2 \right) \leq \exp\left( - \frac{3m}{2s} \right).
$$
Plugging in the number of samples $m$ in Eq~~\eqref{eq:m} gives the probability in the truncation error 
$$
\Pbb\left( \sum_{\ell^=1}^m \left\|f_R\left(\ybs^{(\ell)}\right)\right\|_{\Xcal}^2 \geq \frac{5m}{s}{\|f_R\|_{\Xcal,1}^{(\omega)}}^2 \right) \leq N^{-\log^3(s)}
$$
By applying Stechkin's bound~\eqref{eq:Stechkin} and the definition $\Lambda$, we may bound
$$
\|f_R\|_{\Xcal,1}^{(\omega)} \leq \sigma_{s/2}(f)_{\Xcal,1}^{(\omega)} \left( \frac{1}{s/2} \right)^{1/p-1}\|f\|_{\Xcal,p}^{(\omega)} = 2^{1-1/p}2^{1/p-1}\|f\|_{\Xcal,p}^{(\omega)}.
$$
It follows from Eq.~\eqref{eq:eps4Approx}, that 
$$
\Pbb\left( \sum_{\ell^=1}^m \left\|f_R\left(\ybs^{(\ell)}\right)\right\|_{\Xcal}^2 \geq m\varepsilon^2 \right) \leq \Pbb\left( \sum_{\ell^=1}^m \left\|f_R\left(\ybs^{(\ell)}\right)\right\|_{\Xcal}^2 \geq \frac{5m}{s}{\|f_R\|_{\Xcal,1}^{(\omega)}}^2 \right) \leq N^{-\log^3(s)}.
$$

By assumption, we have $\|b^{(i)} f(\ybs^{(i)})\|_\Xcal \leq \varepsilon$ for all $1 \leq i \leq m$. 
Hence, altogether
$\|b - f_\Lambda\|_{\Xcal,2} \leq 2\sqrt{m}\varepsilon.$

We may now apply Theorem~\ref{thm:fctApproxInf} which gives the bounds on the error. 

\end{proof}

\subsection{Proof of Theorem~\ref{thm:EDPrecovery}}
We are now ready to prove our Theorem. 

\subsubsection{Varifying the assumptions}

The existence of solutions is given by the summability of the weight sequence $\vbf$ in Eq.~\eqref{eq:wuea}. 
And it follows from Proposition~\ref{prop:linGh} that we may approximate our solutions by Petrov-Galerkin discretization. 

Assume we are given samples $b^{(l)}$ which corresponds to the Petrov-Galerkin discretization of the truncated operator. 
Because of Proposition~\ref{prop:truncation}, Proposition~\ref{prop:linGh} and the following Eq.~\eqref{eq:spatialApprox}, we can find solvers such that
\begin{equation*}
\|b^{(l)} - u(\ybs^{(l)})\|_{\Xcal} \leq \varepsilon. 
\end{equation*}

The weighted $\ell^p$ summability of the solution $u$ in $\Fcal$ requires a result from~\cite{Rauhut14CSPG} 
\begin{atheorem}
\label{thm:summability}
Let $0 < p \leq 1$. Assume that the summability conditions ~\eqref{eq:lpSummability} and~\eqref{eq:wuea} hold for some weight sequence $\vbf$. 
For $\theta \geq 1$ construct a sequence of weights $\omega = (\omega_\nu)_{\nu \in \Fcal}$ on $\Fcal$ with $\omega_\nu = \theta^{\|\nu\|_0}\vbf^\nu$. 
Then the sequence of norms $(\|u_\nu\|_{\Xcal^h})_{\nu \in \Fcal} \in \ell^p_{\omega}(\Fcal)$. 
\end{atheorem}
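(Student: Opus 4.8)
The plan is to follow the holomorphy-plus-summability strategy of~\cite{Chkifa14Breakingcurse}, in the weighted form introduced in~\cite{Rauhut14CSPG}. I would work throughout with the Petrov--Galerkin solution $u^h(\ybs)=G^h(\ybs)u(\ybs)\in\Xcal^h$ and its \Tscheb coefficients $u_\nu=u_\nu^h:=\int_{\Ucal} u^h(\ybs)T_\nu(\ybs)\,\dif{\eta}(\ybs)$, keeping every constant independent of $h$. \emph{Step 1 (holomorphic extension, uniform in $h$).} Writing $A(\ybs)=A_0\big(I+\sum_{j\ge1}y_jA_0^{-1}A_j\big)$, the Neumann series converges whenever $\sum_{j\ge1}|y_j|b_{0,j}<1$; the same argument at the discrete level (using the uniform discrete $\inf$-$\sup$ condition~\eqref{eq:infsupDiscrete}, which yields $\|(A_0^h)^{-1}A_j^h\|_{\Lcal(\Xcal^h)}\lesssim b_{0,j}$, together with the a priori bound~\eqref{eq:priorBound}) shows that $\ybs\mapsto u^h(\ybs)$ extends to a bounded $\Xcal^h$-valued holomorphic map on every open polyellipse $\mathcal{E}_{\boldsymbol\rho}=\bigotimes_{j\ge1}\mathcal{E}_{\rho_j}$ with $\rho_j>1$ and $\sum_{j\ge1}\tfrac{\rho_j+\rho_j^{-1}}{2}b_{0,j}<1$, with $\sup_{\zbf\in\mathcal{E}_{\boldsymbol\rho}}\|u^h(\zbf)\|_{\Xcal}\le C_f$, where $C_f$ depends only on $\mu_d$ and $\|f\|_{\Ycal'}$.

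\emph{Step 2 (coefficient bounds adapted to each $\nu$).} For a fixed $\nu\in\Fcal$ with finite support $F=\supp(\nu)$, and any $(\rho_j)_{j\in F}$ with $\rho_j>1$ and $\sum_{j\in F}\rho_jb_{0,j}\le\kappa<1$ (which suffices for holomorphy of $\zbf_F\mapsto u^h(\zbf_F,0)$ since $\tfrac{\rho_j+\rho_j^{-1}}{2}\le\rho_j$), a tensorised Cauchy estimate on $\bigotimes_{j\in F}\partial\mathcal{E}_{\rho_j}$ together with $\|T_\nu\|_\infty=2^{\|\nu\|_0/2}$ gives
\[
\|u_\nu\|_{\Xcal}\ \le\ C_f\,2^{\|\nu\|_0}\prod_{j\in F}\rho_j^{-\nu_j}.
\]
The essential point is that the $\rho_j$ may be chosen differently for each $\nu$: one can afford $\rho_j$ large precisely in the tail directions where $\mu_j:=v_j^{(2-p)/p}b_{0,j}$ is small, and in the directions where $\nu_j$ is large, subject only to the single scalar budget $\sum_{j\in F}\rho_jb_{0,j}<1$.

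\emph{Step 3 (weighted multi-index summability — the crux).} With $\omega_\nu=\theta^{\|\nu\|_0}\vbf^{\nu}$ it remains to prove $\sum_{\nu\in\Fcal}\omega_\nu^{2-p}\|u_\nu\|_{\Xcal}^p<\infty$. Note that, with $\mu_j=v_j^{(2-p)/p}b_{0,j}$, hypothesis~\eqref{eq:wuea} reads $\sum_{j\ge1}\mu_j\le\kappa<1$ while hypothesis~\eqref{eq:lpSummability} reads $(\mu_j)_{j\ge1}\in\ell^p$, since $\mu_j^p=v_j^{2-p}b_{0,j}^p$. Plugging in the bound of Step~2 and expanding $\omega_\nu^{2-p}=\big(\theta^{\|\nu\|_0}\big)^{2-p}\prod_j v_j^{(2-p)\nu_j}$, the powers of $v_j$ recombine with $\rho_j^{-p\nu_j}$ into factors $\big(v_j^{2-p}\rho_j^{-p}\big)^{\nu_j}$, leaving a sum over $\Fcal$ of products of geometric-type series, one per active direction; its convergence must be reduced, via the sub-additivity $\big(\sum_{j\in F}\mu_j\big)^p\le\sum_{j\in F}\mu_j^p$ (valid for $p\le1$) and a careful count of which index sets $F$ arise, to the two scalar facts $\sum_j\mu_j<1$ (which keeps each per-direction series summable and uniformly bounded) and $\sum_j\mu_j^p<\infty$ (which makes the product over directions converge). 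This weighted version of the Chkifa--Cohen--Schwab summability lemma is the step I expect to be genuinely delicate, and it is exactly where both hypotheses~\eqref{eq:wuea} and~\eqref{eq:lpSummability} are needed; an equivalent route is to first bound the Taylor coefficients by $\|t_\nu\|_{\Xcal}\le C_f\binom{|\nu|}{\nu}b_0^{\nu}$ directly from the Neumann series and then pass to \Tscheb coefficients, but the combinatorial estimate fed by the two hypotheses is of the same nature. Once Step~3 is in place, the conclusion $(\|u_\nu\|_{\Xcal^h})_{\nu\in\Fcal}\in\ell^p_{\omega}(\Fcal)$, with bound uniform in $h$, is immediate.
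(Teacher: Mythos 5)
The paper does not actually prove Theorem~\ref{thm:summability}: it is imported verbatim from~\cite{Rauhut14CSPG}, so there is no in-paper argument to compare against. Your overall strategy --- holomorphic extension of $\ybs\mapsto u^h(\ybs)$ to polyellipses uniformly in $h$, per-$\nu$ Cauchy--Bernstein estimates on the \Tscheb coefficients with polyradii $\boldsymbol\rho=\boldsymbol\rho(\nu)$ adapted to each multi-index, and then a weighted summability lemma --- is exactly the route taken in that reference (and in the Chkifa--Cohen--Schwab line of work it builds on), so you have correctly identified the right proof architecture and, in particular, correctly decoded the two hypotheses as $\sum_j\mu_j<1$ and $(\mu_j)\in\ell^p$ for $\mu_j=v_j^{(2-p)/p}b_{0,j}$.

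However, the proposal has a genuine gap where you yourself flag it: Step~3 is not carried out. You never specify the choice of $\rho_j(\nu)$, and without an explicit choice the bound $\|u_\nu\|_\Xcal\le C_f\,2^{\|\nu\|_0}\prod_j\rho_j^{-\nu_j}$ cannot be summed against $\omega_\nu^{2-p}=\theta^{(2-p)\|\nu\|_0}\prod_jv_j^{(2-p)\nu_j}$; the entire content of the theorem lives in showing that a single admissible $\boldsymbol\rho(\nu)$ per $\nu$ makes $\sum_\nu\theta^{(2-p)\|\nu\|_0}2^{p\|\nu\|_0}\prod_j\bigl(v_j^{2-p}\rho_j^{-p}\bigr)^{\nu_j}$ finite, and "must be reduced\dots via a careful count" is a statement of the goal, not a proof. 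The standard execution (splitting $\Nbb$ into a finite set $E$ where $b_{0,j}$ is not small and its complement, choosing $\rho_j$ constant on $E$ and $\rho_j\sim$ a function of $\mu_j$ off $E$, then factorizing the sum over $\Fcal$ as a product of per-coordinate geometric series controlled by $\sum_j\mu_j<1$ and $\sum_j\mu_j^p<\infty$) is precisely the delicate step and must be written out. There is also a smaller but real slip in Step~2: the \Tscheb coefficient $u_\nu$ is an integral over \emph{all} coordinates, so the Bernstein-ellipse estimate requires holomorphy of $\zbs_F\mapsto u^h(\zbs_F,\ybs_{F^c})$ uniformly over $\ybs_{F^c}\in[-1,1]^{F^c}$, not of $\zbs_F\mapsto u^h(\zbs_F,0)$; the correct budget is therefore $\sum_{j\in F}\tfrac{\rho_j+\rho_j^{-1}}{2}b_{0,j}+\sum_{j\notin F}b_{0,j}<1$, and dropping the tail term $\sum_{j\notin F}b_{0,j}$ changes which polyradii are admissible. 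Both issues are fixable along the lines of~\cite{Rauhut14CSPG}, but as written the proof is incomplete at its central step.
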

In particular, for $\theta = \sqrt{2}$, we ensure that 1) $\omega_\nu \geq \|T_\nu\|_\infty$ (see~\eqref{eq:linfTscheb}) and 2) $\|u\|_{\Xcal,p} = \left(\sum\|u_\nu\|_\Xcal^p\right)^{1/p} < \infty$.
It then follows from the weighted Stechkin bound from Proposition~\ref{prop:Stechkin} and Eq.~\eqref{eq:Stechkin} that
\begin{equation*}
\sigma_s(u)_{r,p}^{(\omega)} \leq \widetilde{\sigma}_s(u)_{r,p}^{(\omega)} \leq (s - \|\omega\|_\infty^2)^{\frac{1}{p} - \frac{1}{q}}\|u\|_{r,q}^{(\omega)}
\end{equation*}
so that when plugged back into the a priori bounds obtained from Theorem~\ref{thm:approxInfDim} we arrive at 
\begin{align*}
\|u-\wu\|_{L^\infty(\Xcal;\Ucal,\eta)} &\leq c\sigma_{s/2}(u)_{\Xcal,1}^{(\omega)} + c' \sqrt{s} \varepsilon + \|f_R\|_{L^\infty(\Xcal;\Ucal,\eta)} \\
	&\leq c\sigma_{s/2}(u)_{\Xcal,1}^{(\omega)} + c' \sqrt{s} \varepsilon + \|f_R\|_{\Xcal,1}^{(\omega)} \leq (c+1) \sigma_{s/2}(u)_{\Xcal,1}^{(\omega)} + c'\sqrt{s}\varepsilon,
\end{align*}
where the first inequality is a computation done in the proof of Theorem~\ref{thm:approxInfDim}.
The bound is then obtained by applying Stechkin's bound. 
We can make the same computations for the $L^2$ error bound, using the fact that $\|u_R\|_{L^2(\Xcal;\Ucal,\eta)} \leq \sqrt{\frac{2}{s}}\|u_R\|_{\Xcal,1}^{(\omega)}$.

\subsubsection{A remark on the size of the parameter space}
It is important to notice that the theorem is valid for a space of multi-indices $\Lambda$ with $N$ elements. 
The theorem is only relevant in case the size of this space remains \emph{small}. 
To this end, we recall the following results from~\cite{Rauhut14CSPG}: 
\begin{aprop}
Assume $s \geq 1$. 
\begin{enumerate}
\item (constant weights) Let $\omega_j = \beta, j = 1 \dots d$, for some $\beta >1$ and $\omega_j = \infty$ for $j > d$. Then 
$$
N \leq \left\{ \begin{array}{l} 
(\log_{\beta^2}(\beta^2s/2) )^d, \qquad \text{ if } d \leq \log_{2\beta^2}(s/2), \\ 
\left(  \left( 1+\frac{1}{ \log_2(\beta^2) } \right) ed \right)^{\log_{2\beta^2}(s/2)}, \qquad \text{otherwise. }
\end{array} \right.
$$
\item (polynomially growing weights) Let $\omega_j = cj^\alpha$ for $j \geq 1$ and some $c > 1$ and $\alpha > 0$. 
Then there exist constants $C_{\alpha,c} > 0$ and $\gamma_{\alpha,c}$ such that 
$$
N \leq C_{\alpha,c} s^{\gamma_{\alpha,c} \log(s)}.
$$
\end{enumerate}
\end{aprop}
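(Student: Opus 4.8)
The plan is to treat both assertions as purely combinatorial estimates for the cardinality of $\Lambda=\{\nu\in\Fcal:\omega_\nu^2\le s/2\}$, where (taking $\theta=\sqrt2$ in Theorem~\ref{thm:summability}) $\omega_\nu=2^{\|\nu\|_0/2}\prod_{j}\omega_j^{\nu_j}$, so that $\omega_\nu^2=2^{\|\nu\|_0}\prod_j\omega_j^{2\nu_j}$. Passing to base-$2$ logarithms turns the defining constraint into the additive inequality
\[
\|\nu\|_0+\sum_{j\ge1}\nu_j\log_2(\omega_j^2)\ \le\ \log_2(s/2),
\]
whose left-hand side is a sum of nonnegative terms. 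From this I would first read off a bound on the support size $\|\nu\|_0$ and on the individual entries $\nu_j$, and then simply count the admissible lattice points.

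For part~(i) the constant weights force $\nu_j=0$ for $j>d$, while for $1\le j\le d$ the inequality above gives $\nu_j\le\log_{\beta^2}(s/2)=:L$. Hence $\Lambda$ embeds into $\{0,1,\dots,\lfloor L\rfloor\}^{d}$, so $N\le(\lfloor L\rfloor+1)^{d}\le(\log_{\beta^2}(\beta^2 s/2))^{d}$; this is the first branch. When $d$ is large this product bound is wasteful, and I would instead organise the count by the support size $k:=\|\nu\|_0$: since $\|\nu\|_0\le\sum_j\nu_j$, the additive inequality applied to an index with $k$ active coordinates yields $(2\beta^2)^{k}\le s/2$, i.e.\ $k\le\log_{2\beta^2}(s/2)=:K$. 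For a fixed $k$-element support the admissible entries are the positive integer vectors summing to at most $L_k:=(\log_2(s/2)-k)/\log_2(\beta^2)$, and there are at most $\binom{\lfloor L_k\rfloor}{k}$ of them; summing $\binom{d}{k}\binom{\lfloor L_k\rfloor}{k}$ over $0\le k\le K$ and invoking $\binom{a}{b}\le(ea/b)^{b}$ together with $L_k\le L=K\bigl(1+1/\log_2\beta^2\bigr)$ is meant to produce the bound $\bigl((1+1/\log_2\beta^2)\,ed\bigr)^{K}$ of the second branch.

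For part~(ii) the support is no longer confined to a finite set, but monotonicity of the weights compensates. If $\nu$ has support $\{j_1<\dots<j_k\}$ then $j_i\ge i$, so $\omega_{j_i}=cj_i^{\alpha}\ge ci^{\alpha}$ and $\omega_\nu^2\ge\prod_{i=1}^{k}(ci^{\alpha})^2=c^{2k}(k!)^{2\alpha}$. Even discarding the factorial, $\omega_\nu^2\le s/2$ forces $k\le\log_{c^2}(s/2)=O(\log s)=:K$; moreover any active coordinate $j$ satisfies $(cj^{\alpha})^2\le s/2$, hence $j\le M:=(s/2)^{1/(2\alpha)}c^{-1/\alpha}$, and each entry obeys $\nu_j\le\log_{c^2}(s/2)=O(\log s)=:L$. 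Thus $\Lambda$ injects into the set of pairs made of a $k$-subset of $\{1,\dots,\lfloor M\rfloor\}$ and a vector in $\{1,\dots,\lfloor L\rfloor\}^{k}$ with $k\le K$, giving
\[
N\ \le\ \sum_{k=0}^{\lfloor K\rfloor}\binom{\lfloor M\rfloor}{k}\lfloor L\rfloor^{\,k}\ \le\ (K+1)\,(ML)^{K}.
\]
Since $\log(ML)=O(\log s)$ and $K=O(\log s)$, the right-hand side is at most $C_{\alpha,c}\,s^{\gamma_{\alpha,c}\log s}$ for suitable constants.

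The one genuinely delicate point is the large-$d$ branch of part~(i): one has to exploit the extra $2^{\|\nu\|_0/2}$ factor (equivalently the $\|\nu\|_0$ term after taking logs) to get a support bound $K=\log_{2\beta^2}(s/2)$ strictly smaller than the degree bound $L=\log_{\beta^2}(s/2)$, and then verify that in $\sum_{k\le K}\binom{d}{k}\binom{\lfloor L_k\rfloor}{k}$ the term $k=K$ dominates (here one uses $d\ge K$ and $L\ge K$, together with $L_K=K$), so that the base $(1+1/\log_2\beta^2)ed$ and exponent $\log_{2\beta^2}(s/2)$ come out exactly. Part~(ii) is routine by comparison once one notes that the smallest possibly-active weight $\omega_1=c>1$ already caps the support size logarithmically.
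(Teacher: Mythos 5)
The paper does not actually prove this proposition: it is explicitly ``recalled'' from \cite{Rauhut14CSPG}, so there is no in-paper argument to compare against. Your blind reconstruction is the standard counting argument behind that result, and it is correct in substance: you read the constraint $\omega_\nu^2=2^{\|\nu\|_0}\prod_j\omega_j^{2\nu_j}\le s/2$ additively in base-$2$ logarithms, extract the support bound $\|\nu\|_0\le K:=\log_{2\beta^2}(s/2)$ (this is exactly where the extra $2^{\|\nu\|_0}$ factor is needed), the entry bound $\nu_j\le L:=\log_{\beta^2}(s/2)$, and then count lattice points. The first branch of (i) and all of (ii) are complete as written. The one step you flag as delicate --- showing $\sum_{k\le K}\binom{d}{k}\binom{\lfloor L_k\rfloor}{k}\le\bigl((1+1/\log_2\beta^2)\,ed\bigr)^{K}$ --- does close, but not by arguing that the $k=K$ term dominates (note $L_K=K$ makes $\binom{L_K}{K}=1$, so that term is only $\binom{d}{K}$; the sum can peak at intermediate $k$). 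The clean way is the injection $(A,B)\mapsto\{(a_i,b_i)\}$ giving $\binom{d}{k}\binom{L}{k}\le\binom{dL}{k}$, followed by the standard bound $\sum_{k\le K}\binom{n}{k}\le(en/K)^{K}$ for $K\le n$; since $dL/K=d\bigl(1+1/\log_2\beta^2\bigr)$, this yields exactly the stated base and exponent. Equivalently (and this is essentially the argument in the cited reference) one can inject $\nu\mapsto\{(j,\nu_j):j\in\supp(\nu)\}$ into the $k$-subsets of $\{1,\dots,d\}\times\{1,\dots,\lceil L\rceil\}$ with $k\le K$ and apply the same binomial-sum bound directly. With that replacement your proof is complete; part (ii) needs no change, since $(K+1)(ML)^{K}=\exp\bigl(O((\log s)^2)\bigr)$ is precisely of the form $C_{\alpha,c}\,s^{\gamma_{\alpha,c}\log s}$.
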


Plugging back these estimates into the main theorem means that the total number of sampling is linear, with polylog factors, in the weighted sparsity $s$.

\section{Final remarks}
A weakness which has not been addressed here is the reliance of the optimization problem on the knowledge of the weighted $\ell^p$ norm of the expansion. 
Some recent results have shown that such optimization problem are stable when the noise is not known~\cite{Paglia18UnknownNoise}. 

Another approach to overcome the necessity of knowing the noise is to adapt greedy and iterative approximation algorithms (such as Orthogonal Matching Pursuit, Hard Thresholding Pursuit~\cite{Foucart11HTP} and its Graded variants~\cite{Bouchot13GHTP,Bouchot14fHTP}, Null-space tuning~\cite{Li14NST}, CoSaMP~\cite{needell2009cosamp} or Iterative Hard Thresholding~\cite{blda09,Jo2013wIHT}). 
These algorithms need to be analyzed in the context of weighted sparsity and then adapted to fit the vector-valued function recovery. 
This has only recently started being tackled in research~\cite{adcock2018wOMP} and already shows quite a lot of potential.




\bibliographystyle{abbrv}
\bibliography{MLCSPG}

\appendix
\section{Quasi-approximations}
\label{app:quasiapprox}

\subsection{Proof of inequality}\eqref{eq:estimateQuasiweighted}
Without loss of generality, assume that the entries of $\xbf$ are (weighted) ordered, i.e. $\xbf = \widetilde{\xbf}$. 
Let $k_{3s} := \max\{k: \sum_{i=1}^k \omega_i^2 \leq 3s\}$, and $T$ be the subset $\{1,2, \cdots, k_{3s}\}$.
By definition, it holds $\omega(T) \leq 3s$. 
By virtue of $\|\omega\|_\infty^2 \leq s$ it also holds that $\omega(T) \geq 2s$ (otherwise, we could add another index to the set $T$ contradicting the optimality of $k_{3s}$).
Similarly, let $S$ be the support of the quasi-best weighted $s$ block approximation, i.e. $S = \{1, \cdots, k_s\}$.
Note that since $\omega_j \geq 1$ it follows directly, that the true (=non-weighted) cardinality of $S$ fulfills $|S| \leq s$.
Define $n_b = \lfloor \omega_b^2 +1 \rfloor$ and consider the residue $r_b = n_b - \omega_b^2$ for $b \in \Bcal$. 
We have 
$$ 
\sum_{j \in S} n_j \leq \sum_{j \in S} \omega_j^2 + s \leq 2s \leq \omega(T) \leq \sum_{j \in T} n_j.
$$ 	

The goal of the proof is to group copies of blocks which themselves embed the weight information. 
Let then, for a given inner norm $\|\cdot\|$ acting on the blocks (not necessarily the same for each block)
$\zbf = \left( \|\xbf[b]\|^p/\omega_b^p {\bf{1}}_{n_b-1}, (1-r_b)\|\xbf[b]\|/\omega_b^p \right)_{b \in \Bcal}$
where ${\bf{1}}_{n_b-1}$ denotes the row vector containing $n_b-1$ ones. 
Altogether, the vector $\zbf$ contains (ordered) $n_b$ terms which sum up to $\|\xbf[b]\|^p\omega_b^{2-p}$ for all $b \in \Bcal$. 
Finally
\begin{align*}
{\|\xbf\|_{\|\cdot\|/p}^{(\omega)}}^p &= \max\left\{ \sum_{b \in S}\omega_b^2\|\xbf[b]\|^p/\omega_b^p: S \subset \Bcal, \omega(S) \leq s \right\} \\
&\leq \max\left\{ \sum_{b \in S}\omega_b^2\|\xbf[b]\|^p/\omega_b^p: S \subset \Bcal, \sum_{b \in S}n_b \leq \sum_{b \in T}n_b \right\} \\
&\leq \max\left\{ \sum_{b \in S} \|\zbf[b]\|_1: S \subset \Bcal, |S| \leq \sum_{b \in T}n_b \right\} \leq {\|\xbf_T\|_{\|\cdot\|/p}^{(\omega)}}^p.
\end{align*}
In the last line, we have used a block structure associated with the vector $\zbf$ which is inherited from $\xbf$. 

\subsection{Proof of the block weighted Stechkin bound}
\begin{aprop}
Let $\Bcal = (\Bcal_1, \cdots, \Bcal_B)$ be a structure for $\Rbb^N$. 
Let $q < p \leq 2$. 
For any $\xbf \in \Rbb^N$, let $\xbf[S]$ define its quasi-best weigthed $s$ term approximation and define $\tilde{\sigma}_s{\xbf}_p^{(\omega)} := \|\xbf-\xbf[S]\|_{2,p}^{(\omega)}$. 
The following Stechkin's bound hold
\begin{equation}
\sigma_s(\xbf)_p^{(\omega)} \leq \tilde{\sigma}(\xbf)_{p}^{(\omega)} \leq \left( s - \|\omega\|_\infty^2 \right)^{1/p-1/q} \|\xbf\|_{2/q}^{(\omega)}.
\end{equation}
\end{aprop}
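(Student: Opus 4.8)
The plan is to follow the scalar weighted Stechkin estimate of Rauhut and Ward, substituting block norms $\|\xbf[b]\|_2$ for absolute values of entries and block weights $\omega_b$ for single-index weights. The left-hand inequality $\sigma_s(\xbf)_{2,p}^{(\omega)} \le \widetilde{\sigma}_s(\xbf)_{2,p}^{(\omega)}$ is immediate: the block support $S$ of the quasi-best $s$-block approximation satisfies $\omega(S)\le s$, so $\xbf[S]$ is one admissible competitor in the infimum defining $\sigma_s(\xbf)_{2,p}^{(\omega)}$ (and the inner norm $2$ may be replaced by any fixed block norm $r$ throughout, which is why the statement holds in the generality of Proposition~\ref{prop:Stechkin}). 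So the real work is the right-hand inequality.

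First I would reduce to the ordered case: without loss of generality assume $\xbf = \widetilde{\xbf}$, so that the quantities $t_b := \|\xbf[b]\|_2\,\omega_b^{-1}$ are arranged non-increasingly in $b$, and let $S = \{1,\dots,k_s\}$ be the block support of the quasi-best weighted $s$-block approximation, so $\omega(S)=\sum_{b\in S}\omega_b^2 \le s$. The one extra ingredient beyond monotonicity is the lower bound $\omega(S) \ge s - \|\omega\|_\infty^2$: by maximality of $k_s$ we have $\sum_{b=1}^{k_s+1}\omega_b^2 > s$, hence $\omega(S) > s - \omega_{k_s+1}^2 \ge s - \|\omega\|_\infty^2$, which (together with the standing hypothesis $s > \|\omega\|_\infty^2$) makes the negative power of $s-\|\omega\|_\infty^2$ meaningful.

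Next I would estimate the tail. Put $a_b := t_b^{\,q}$; since $x\mapsto x^{q/p}$ is increasing, the weighted rearrangement that orders the $t_b^{\,p}$ also orders the $a_b$ non-increasingly. One has $\omega_b^{2-p}\|\xbf[b]\|_2^{\,p} = \omega_b^2 a_b^{\,p/q}$ and $\sum_b \omega_b^2 a_b = \big(\|\xbf\|_{2,q}^{(\omega)}\big)^q$. For every $b \in S^c$, monotonicity gives $a_b\,\omega(S) \le \sum_{j\in S}\omega_j^2 a_j \le \big(\|\xbf\|_{2,q}^{(\omega)}\big)^q$, hence $a_b \le (s-\|\omega\|_\infty^2)^{-1}\big(\|\xbf\|_{2,q}^{(\omega)}\big)^q$. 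Since $q<p\le 2$ the exponent $p/q-1$ is positive, so
\[
\big(\|\xbf[S^c]\|_{2,p}^{(\omega)}\big)^p \;=\; \sum_{b\in S^c}\omega_b^2\, a_b^{\,p/q-1}\,a_b \;\le\; \Big(\tfrac{(\|\xbf\|_{2,q}^{(\omega)})^q}{s-\|\omega\|_\infty^2}\Big)^{p/q-1}\!\!\sum_{b\in S^c}\omega_b^2 a_b \;\le\; \frac{\big(\|\xbf\|_{2,q}^{(\omega)}\big)^p}{(s-\|\omega\|_\infty^2)^{\,p/q-1}}.
\]
Taking $p$-th roots gives $\widetilde{\sigma}_s(\xbf)_{2,p}^{(\omega)} = \|\xbf[S^c]\|_{2,p}^{(\omega)} \le (s-\|\omega\|_\infty^2)^{1/p-1/q}\|\xbf\|_{2,q}^{(\omega)}$, which is the claim.

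I do not expect a genuine obstacle here: this is an adaptation of a known estimate, and the only care needed is bookkeeping — confirming that the weighted block rearrangement simultaneously orders $t_b^{\,p}$ and $t_b^{\,q}$, and that $\omega(S) > s - \|\omega\|_\infty^2$ so the final exponent is legitimate. These are precisely the points at which the block-and-weight structure could, in principle, misbehave, so they are the ones I would write out in full; everything else is the classical Stechkin manipulation applied entrywise to the blocks.
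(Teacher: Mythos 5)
Your argument is correct and follows essentially the same route as the paper's proof in Appendix~\ref{app:quasiapprox}: both bound the tail $\sum_{b\in S^c}\omega_b^2 t_b^{\,p}$ by pulling out $\sup_{b\in S^c} t_b^{\,p-q}$ (with $t_b=\|\xbf[b]\|\omega_b^{-1}$) and controlling that supremum by a convex combination over $S$ with weights $\omega_j^2/\omega(S)$, using $\omega(S)\ge s-\|\omega\|_\infty^2$. Your write-up is in fact slightly more careful than the paper's at one point, namely in deriving $\omega(S)>s-\|\omega\|_\infty^2$ from the maximality of $k_s$, which the paper only asserts.
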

\begin{proof}
With $\xbf[S]$ the quasi-best weigted $s$ term approximation and $p < q \leq 2$ it follows 
\begin{align}
\left( \tilde{\sigma}_s(\xbf)_p^{(\omega)} \right)^p &= \sum_{j \notin S}\|\xbf[\Bcal_j]\|_2^p\omega_j^{2-p} \leq \max_{j \notin S}\left\{ \|\xbf[\Bcal_j]\|_2^{p-q}\omega_j^{q-p} \right\}\sum_{j \notin S}\|\xbf[\Bcal_j]\|_2^{q}\omega_j^{2-q} \nonumber \\ 
\label{eq:stechkinStar} &\leq \left( \max_{j \notin S}\|\xbf[\Bcal_j]\|_2 \omega_j^{-1} \right)^{p-q} {\|\xbf\|_{2/q}^{(\omega)}}^q
\end{align}
By definition of the set $S$ of the quasi-best approximation of $\xbf$, we have that $s- \|\omega\|_\infty^2 \leq s - \omega_j^2\leq \omega(S)$
Introducing $\lambda_k := \left(\sum_{j \in S}\omega_j^2\right)^{-1}\omega_k \leq (s- \|\omega\|_\infty^2)^{-1}\omega_k^2$, we arrive at
\begin{align*}
\left( \max_{j \notin S} \|\xbf[\Bcal_j]\|_2 \omega_j^{-1} \right)^q &\leq \sum_{k \in S}\lambda_k\|\xbf[\Bcal_k]\|_2^q \omega_k^{-q} \leq \left( s - \|\omega\|_\infty^2 \right)^{-1}\sum_{k \in S}\omega^{2-q}\|\xbf[\Bcal_k]\|_2^q \leq \left( s - \|\omega\|_\infty^2 \right)^{-1} {\|\xbf\|_{2/q}^{(\omega)}}^q.
\end{align*} 
Injecting this into~\eqref{eq:stechkinStar}, we finally arrive at 
\begin{equation}
\left( \tilde{\sigma}_s(\xbf)_p^{(\omega)} \right)^p \leq \left(  \left( s - \|\omega\|_\infty^2\right)^{-1}\right)^{\frac{p-q}{q}}{\|\xbf\|_{2/q}^{(\omega)}}^{p-q}{\|\xbf\|_{2/q}^{(\omega)}}^q = \left(s - \|\omega\|_\infty^2\right)^{1-p/q}{\|\xbf\|_{2/q}^{(\omega)}}^p.
\end{equation}
\end{proof}

\subsection{Proof of the weighted block RIP recovery results}
\label{a:proofOfLemma}

We use the following results, known in traditional compressed sensing~\cite[Proposition 6.3]{Foucart13book} and in group sparsity~\cite[Eq. 40]{Eldar2009group}.

\begin{alemma}
\label{lemma:wbripST}
Let $\Bcal = (\Bcal_1, \cdots, \Bcal_B)$ be a block structure and $\omega = (\omega_1, \cdots, \omega_B)$ with $\omega_i \geq 1$ its associated weight sequence. Let $\ubf, \vbf \in \Rbb^N$ be two block vectors such that $\|\ubf\|_0^{(\omega)} \leq s$ and $\|\vbf\|_0^{(\omega)} \leq t$. 
In addition, assume that $\bsupp(\ubf) \cap \bsupp(\vbf) = \varnothing$. 
Then 
\begin{equation}
|\langle A\ubf, A\vbf\rangle| \leq \delta_{s+t}\|\ubf\|_2 \|\vbf\|_2.
\end{equation}
\end{alemma}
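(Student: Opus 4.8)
The plan is to transcribe the classical RIP inner-product estimate (as in \cite[Proposition 6.3]{Foucart13book} and \cite[Eq.~40]{Eldar2009group}) into the weighted block setting, checking at each step that the weights and the block partition do not interfere. First I would reduce to the normalized case: the claimed inequality is homogeneous of degree one separately in $\ubf$ and in $\vbf$, and it holds trivially when $\ubf = 0$ or $\vbf = 0$, so I may assume $\|\ubf\|_2 = \|\vbf\|_2 = 1$ and it then suffices to prove $|\langle A\ubf, A\vbf\rangle| \leq \delta_{s+t}$.

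Next I would extract the two consequences of the disjointness hypothesis $\bsupp(\ubf) \cap \bsupp(\vbf) = \varnothing$. Since the nonzero blocks of $\ubf$ and of $\vbf$ sit in different blocks of $\Bcal$, the vectors $\ubf$ and $\vbf$ are orthogonal in $\Rbb^N$, so $\|\ubf \pm \vbf\|_2^2 = \|\ubf\|_2^2 + \|\vbf\|_2^2 = 2$. Moreover $\bsupp(\ubf \pm \vbf) = \bsupp(\ubf) \cup \bsupp(\vbf)$, a disjoint union, so
\[
\|\ubf \pm \vbf\|_0^{(\omega)} = \sum_{b \in \bsupp(\ubf)} \omega_b^2 + \sum_{b \in \bsupp(\vbf)} \omega_b^2 = \|\ubf\|_0^{(\omega)} + \|\vbf\|_0^{(\omega)} \leq s + t,
\]
so both $\ubf + \vbf$ and $\ubf - \vbf$ are admissible test vectors for the WBRIP of order $s+t$.

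Then I would apply the polarization identity $\langle A\ubf, A\vbf\rangle = \frac{1}{4}\big(\|A(\ubf+\vbf)\|_2^2 - \|A(\ubf-\vbf)\|_2^2\big)$, bound $\|A(\ubf+\vbf)\|_2^2 \leq (1+\delta_{s+t})\|\ubf+\vbf\|_2^2 = 2(1+\delta_{s+t})$ and $\|A(\ubf-\vbf)\|_2^2 \geq (1-\delta_{s+t})\|\ubf-\vbf\|_2^2 = 2(1-\delta_{s+t})$ via the WBRIP~\eqref{eq:blockWRIP}, and combine to obtain $\langle A\ubf, A\vbf\rangle \leq \delta_{s+t}$. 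Running the same argument with $\vbf$ replaced by $-\vbf$ (which has the same block support and the same norm) gives $-\langle A\ubf, A\vbf\rangle \leq \delta_{s+t}$, hence $|\langle A\ubf, A\vbf\rangle| \leq \delta_{s+t}$, and undoing the normalization yields the stated inequality. There is no genuine obstacle here; the proof is a routine adaptation of the unweighted, unstructured one. The only points that deserve a moment's attention are that the Euclidean norm $\|\cdot\|_2$ occurring in the WBRIP sees neither the weights nor the partition, so the orthogonality of $\ubf$ and $\vbf$ is literal orthogonality in $\Rbb^N$, and that weighted block sparsity is additive over disjoint block supports, which is precisely what makes $\ubf \pm \vbf$ legitimate order-$(s+t)$ vectors.
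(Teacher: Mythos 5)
Your proof is correct, and every step checks out: the homogeneity reduction, the additivity of weighted block sparsity over disjoint block supports (which makes $\ubf\pm\vbf$ admissible for the order-$(s+t)$ WBRIP), the literal orthogonality of $\ubf$ and $\vbf$ in $\Rbb^N$, and the polarization step all go through and yield exactly the constant $\delta_{s+t}$. However, you take a genuinely different route from the paper. The paper instead writes $\langle A\ubf, A\vbf\rangle = \langle (A_S^*A_S - I)\ubf, \vbf\rangle$ with $S = \bsupp(\ubf)\cup\bsupp(\vbf)$ (using $\langle\ubf,\vbf\rangle = 0$ to insert the $-I$), applies Cauchy--Schwarz to pull out the operator norm $\|A_S^*A_S - I\|_{2\to 2}$, and then identifies that norm with the extremal Rayleigh quotient of the symmetric matrix $A_S^*A_S - I$, which the WBRIP bounds by $\delta_{s+t}$. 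Your polarization argument is more self-contained, since it never needs the spectral fact that the operator norm of a symmetric matrix equals the supremum of $|\langle M\xbf,\xbf\rangle|/\|\xbf\|_2^2$; the paper's operator-norm formulation, on the other hand, is the version that survives when the supports are not disjoint (giving $|\langle A\ubf,A\vbf\rangle - \langle\ubf,\vbf\rangle|\le \delta\|\ubf\|_2\|\vbf\|_2$) and is the form typically reused in the analysis of greedy and thresholding algorithms. Both arguments are standard adaptations of \cite[Proposition 6.3]{Foucart13book}, and both deliver the same constant.
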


\begin{proof}
Let $S := \operatorname{B-supp}(\ubf) \cup \operatorname{B-supp}(\vbf)$. It follows that $\omega(S) \leq s+t$. 
\begin{align}
|\langle A\ubf, A\vbf\rangle| = |\langle \left(A_S^*A_S-I\right)\ubf, \vbf \rangle| \leq \|A_S^*A_S-I\|_{2 \to 2}\|\ubf\|_{2,2}\|\vbf\|_{2,2},
\end{align}
where we defined as usual as 
\begin{equation}
\|A\|_{2 \to 2} := \sup_{\xbf \neq 0} \frac{\|A\xbf\|_2}{\|\xbf\|_{2,2}}.
\end{equation}
Now note that the RIP like bound is obtained as 
\begin{align*}
\|A\xbf\|_2^2 - \|\xbf\|_{2,2}^2 &= \langle A\ubf, A\ubf\rangle - \langle\xbf,\xbf\rangle \leq \|A_S^*A_S - I\|_{2 \to 2} \|\xbf\|_{2,2}^2. 
\end{align*}
Hence, by definition of the RIP constant, we have $\|A_S^*A_S - I\|_{2 \to 2} \leq \delta_{s+t}$.
\end{proof}

\end{document}